\documentclass[a4paper,12pt]{article}
\usepackage[T1]{fontenc}
\usepackage[utf8]{inputenc}
\usepackage{lmodern,graphicx,bm,url,upgreek}
\usepackage{amsmath,amssymb,amsfonts,amsthm}
\usepackage{calc}
\usepackage{hyperref}
\usepackage{tikz}
\usetikzlibrary{decorations.markings}
\tikzstyle{vertex}=[circle, draw, inner sep=0pt, minimum size=6pt]
\newcommand{\vertex}{\node[vertex]}

\theoremstyle{plain}
     \newtheorem{thm}{Theorem}[section]
     
     \newtheorem{lem}[thm]{Lemma}

\theoremstyle{definition}
     \newtheorem{hyp}[thm]{Hypothesis}

     \newtheorem{rem}[thm]{Remark}
\usepackage[below]{placeins}

\newcommand{\PSp}{\ensuremath{\mathrm{PSp}}}
\newcommand{\PSL}{\ensuremath{\mathrm{PSL}}}
\newcommand{\PGL}{\ensuremath{\mathrm{PGL}}}

\newcommand{\Alt}{\ensuremath{\mathfrak{A}}}
\newcommand{\Sym}{\ensuremath{\mathfrak{S}}}

\newcommand{\Aut}{\ensuremath{\mathrm{Aut}}}

\newcommand{\Out}{\ensuremath{\mathrm{Out}}}

\newcommand{\DG}{\ensuremath{\Delta(G)}}

\newcommand{\Irr}{\ensuremath{\mathrm{Irr}}}
\newcommand{\cd}{\ensuremath{\mathrm{cd}}}

\newcommand{\PSU}{\ensuremath{\mathrm{PSU}}}

\title{\textbf{4-Regular prime graphs of nonsolvable groups}}
\author{Donnie Munyao Kasyoki$^\text{\tiny{1}}$, Paul Odhiambo Oleche$^\text{\tiny{1}}$\\
$^\text{\tiny{1}}$Department of pure and applied mathematics\\
Maseno University}
\date{}

\begin{document}

\maketitle
\hrule\vspace*{0.5cm}

\noindent \textbf{Abstract}: Let $G$ be a finite group and $\cd(G)$ denote the character degree set for $G$. The prime graph $\DG$ is a simple graph whose vertex set consists of  prime divisors of elements in $\cd(G)$, denoted $\rho(G)$. Two primes $p,q\in \rho(G)$ are adjacent in $\DG$ if and only if $pq|a$ for some $a\in \cd(G)$. We determine which simple 4-regular graphs occur as prime graphs for some finite nonsolvable group.
\vspace*{1cm}

\noindent\textbf{AMS subject classification}: 20C15

\noindent\textbf{Keywords}: character degree, nonsolvable group, simple group\vspace*{0.3cm}

\hrule

\section{Introduction}

The graphs arising from character degrees of finite groups have been studied extensively over the last few years. It was determined that the prime graphs of any finite group have diameter not exceeding 3 (see \cite{Wolf, LW, LW2}). In \cite{White}, D. White summarised the graph structure for the prime graphs of finite simple groups. These were classified on the basis of classification of finite simple groups. In that we are able to determine 	that most simple groups have a complete prime graph.

In \cite{HungReg}, H. P. Tong-Viet studied the 3-regular simple graphs that occur as prime graphs for some finite group. He proved that the complete cubic graph is the only 3-regular graph that occurs as a prime graph of some finite group $G$. C. P. M. Zuccari \cite{Zuccari2} obtained that the only noncomplete regular prime graphs for finite solvable groups with $n$ vertices can only be the $(n-2)$-regular, when $n$ is even. In particular, When $n$ is odd then no regular noncomplete graph occurs as a prime graph for some finite solvable group. However, the case for the regular graphs for nonsolvable groups has not been determined yet. In this paper, we seek to prove the following:

Let $\Gamma$ be a simple graph and let $x, y$ be two vertices of $\Gamma$. We write $x\sim y$ if $x$ is adjacent to $y$ in $\Gamma$. All simple groups considered are nonabelian.

\begin{thm}\label{thm:MAIN}
Let $G$ be a nonsolvable group and $\DG$ be a prime graph for $G$. If $\DG$ is 4-regular, then $\DG$ is complete with 5 vertices.
\end{thm}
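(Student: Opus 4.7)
The plan is to combine a subgraph-inheritance argument from a nonabelian chief factor of $G$ with White's classification \cite{White} of prime graphs of finite nonabelian simple groups, using the basic fact that a $4$-regular graph has clique number at most $5$.

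Assume $G$ is nonsolvable and $\DG$ is $4$-regular, so $|\rho(G)|\geq 5$ and $\DG$ contains no clique of size $6$. The goal is to prove $|\rho(G)|=5$, for then $4$-regularity forces $\DG=K_{5}$. Let $H/K$ be a nonabelian chief factor of $G$; then $H/K\cong S^{k}$ for some nonabelian simple $S$ and some $k\geq 1$. Because characters of $G/K$ lift to $G$ and character degrees of a normal subgroup divide some character degree of the overgroup (Clifford theory), we have $\rho(S)\subseteq\rho(H/K)\subseteq\rho(G)$, and every edge of $\Delta(S)$ is an edge of $\DG$; in particular $\omega(\Delta(S))\leq 5$, where $\omega$ denotes the clique number.

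I would next invoke White's classification to enumerate the simple groups $S$ with $\omega(\Delta(S))\leq 5$. For almost every family of nonabelian simple $S$ (alternating $A_{n}$ with $n\geq 7$, most classical Lie-type groups, and the exceptional Lie-type groups away from very small parameter), $\Delta(S)$ is complete on $\rho(S)$, so the condition becomes $|\rho(S)|\leq 5$, leaving only finitely many $S$. The remaining families to inspect individually are those for which $\Delta(S)$ need not be complete: chiefly $\PSL(2,q)$, whose prime graph is the disjoint union of three cliques on the characteristic prime and on the prime divisors of $(q\pm 1)/d$, together with small $\PSL(3,q)$, $\PSU(3,q)$, Suzuki and Ree groups, and a handful of sporadic groups.

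For each surviving $S$ one can refine the constraints by passing to an almost simple section of $G$ with socle $S$ (obtained from $G/C_{G}(H/K)\hookrightarrow\Aut(S^{k})$), whose prime graph again embeds in $\DG$. The final step is to argue, case by case, that either $|\rho(G)|=5$ and hence $\DG=K_{5}$, or else a prime $r\in\rho(G)\setminus\rho(S)$ can be located whose degree in $\DG$ must fail to be exactly $4$, giving a contradiction. The main obstacle is expected to be the $\PSL(2,q)$ case: because $\Delta(\PSL(2,q))$ is so sparse, extra primes arising from field or diagonal automorphisms of $S$—or from other normal sections of $G$—can attach to $\rho(S)$ in many ways, and each such pattern must be ruled out by a careful examination of the known character tables of $\PSL(2,q)$ and of its almost simple extensions.
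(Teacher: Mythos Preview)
Your outline follows the same broad strategy as the paper: pass to a nonabelian chief factor, use White's classification to pin down the possible socles $S$, reduce to an almost simple section, and then do a case analysis. So the architecture is right. But as written the proposal has two genuine gaps that the paper spends most of its length closing.

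First, you never control the primes coming from the \emph{bottom} of $G$. You mention ``extra primes arising from field or diagonal automorphisms of $S$---or from other normal sections of $G$'' only in passing, but this is the heart of the argument. The paper fixes the solvable radical $N$, shows $G/N$ is almost simple with socle $M/N\cong S$ (first proving $k=1$ in Lemma~\ref{lem:1}, then $C=N$ in Lemma~\ref{lem:2}), and then for every prime $r\in\rho(G)\setminus\pi(G/N)$ chooses $\theta\in\Irr(N)$ with $r\mid\theta(1)$ and studies $\Irr(M\mid\theta)$. The key tools are Tong-Viet's Lemma~\ref{TVtri} (either $\theta$ extends and $S\cong\Alt_5$ or $\PSL_2(8)$, or some $\psi(1)/\theta(1)$ picks up two primes of $\pi(S)$), together with McVey's Lemmas~\ref{McVey}--\ref{conclusion} on indices of inertia subgroups in $\PSL_2(q)$ via Dickson's list, and P\'alfy's condition on $\rho(N)$. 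These are what force the extra edges that wreck $4$-regularity. Your proposal has no analogue of this machinery; ``whose degree in $\DG$ must fail to be exactly $4$'' is exactly the conclusion that needs a mechanism, not an assertion.

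Second, you do not say how you dispose of large $|\rho(G)|$. The paper does the hands-on work only for $6\le|\rho(G)|\le 9$ (Lemmas~\ref{7vert}--\ref{9vert}, \ref{8vertfinal}, \ref{6vert}) and then invokes the $K_5$-free bound of Akhlaghi--Khedri--Taeri to kill $|\rho(G)|\ge 10$ in one line (Lemma~\ref{lem:10}). Without that external input, your case analysis over all $S$ with $\omega(\Delta(S))\le 5$ is not finite: for $\PSL_2(q)$ there are infinitely many $q$ with $|\pi(q^2-1)|$ arbitrary, and nothing in your outline bounds $|\rho(G)|$ from above.
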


\section{Simple groups}
$\Alt_n$ and $\Sym_n$ denoted the alternating and symmetric groups respectively.
\begin{lem}\textup{\cite{White}}\label{lem0}
Let $S$ be a simple group such that $\Delta(S)$ is connected. Then $\Delta(S)$ is complete except:
\begin{enumerate}
\item $S\cong J_1, M_{11}, M_{23}$
\item $S\cong \Alt_n, n\in \{5,6,8\}$
\item $S\cong\ ^2B_2(q^2), q^2=2^{2m+1}, m\geq 1$
\item $S\cong \PSL_3(q)$, $q$ a power of prime $p$ and $q-1\neq 2^i3^j,$ for some $ i\ge 1, j\ge 0$.
\item $S\cong \PSU_3(q^2), q$ is a power of  prime $p$ and $q+1\neq 2^i3^j$ for some $i,j\geq 0$
\end{enumerate}
\end{lem}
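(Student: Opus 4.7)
The strategy is to assume that $\Delta(G)$ is $4$-regular and derive that it must be $K_5$. Since a $4$-regular simple graph has at least $5$ vertices, with equality only for $K_5$, it is enough to rule out the possibility $|\rho(G)|\ge 6$. The fundamental tool is the standard consequence of Clifford's theorem that $\Delta$ is monotone under normal sections: for normal subgroups $M\lhd N$ of $G$ with both $M,N\lhd G$, one has $\rho(N/M)\subseteq \rho(G)$ and $\Delta(N/M)$ occurs as a subgraph of $\Delta(G)$. Since $G$ is nonsolvable, it admits a chief factor isomorphic to $S^{k}$ for some nonabelian simple group $S$, and pulling back from $S^{k}$ to a single copy of $S$ shows that $\Delta(S)$ is a subgraph of $\Delta(G)$.

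The proof is organized as a case analysis on the structure of $\Delta(S)$. If $\Delta(S)$ is complete on at least $6$ vertices, then $\Delta(G)\supseteq K_6$ and some vertex of $\Delta(G)$ has degree at least $5$, contradicting $4$-regularity; hence $|\rho(S)|\le 5$. If $\Delta(S)$ is connected but not complete, then by Lemma \ref{lem0} $S$ lies in a short list of exceptions, and we rule out $4$-regularity by consulting the explicit character degrees of $J_1$, $M_{11}$, $M_{23}$, $\Alt_5$, $\Alt_6$, $\Alt_8$, the Suzuki groups, and the relevant $\PSL_3(q)$ and $\PSU_3(q^2)$, together with any admissible almost simple extension (whose extra degrees come from $\Out(S)$). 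If $\Delta(S)$ is disconnected, we invoke the classification of nonabelian simple groups with disconnected prime graph to reduce to a finite list of low-rank families and argue by direct inspection; the small size of $\rho(S)$ in each such case forces the remaining primes of $\rho(G)$ to come from extensions, which are too few to assemble a $4$-regular graph other than $K_5$.

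The essential and most delicate case is when $\Delta(S)$ is complete with exactly $|\rho(S)|=5$ (as happens for $S\in\{\Alt_{11},\Alt_{12},M_{22},\ldots\}$): then $\Delta(G)$ already contains a $K_5$ on $\rho(S)$ whose vertices are saturated, so any extra prime $q\in\rho(G)\setminus\rho(S)$ must be non-adjacent in $\Delta(G)$ to every prime of $\rho(S)$, and the primes of $\rho(G)\setminus\rho(S)$ must themselves support a disjoint $4$-regular subgraph on at least five further vertices. To exclude this, my plan is to study the structure of $G$ above $S$: writing $L=SC_G(S)$ with $G/L\le\Out(S)$, any prime $q\in\rho(G)\setminus\rho(S)$ arises from the solvable subgroup $C_G(S)$ or from $\Out(S)$, and a Clifford-theoretic product-of-degrees argument (multiplying a nontrivial degree of $S$ by a character whose degree is divisible by $q$) produces an irreducible character of $G$ of degree divisible by both $q$ and some $p\in\rho(S)$, yielding an edge from $q$ into the $K_5$ and destroying its saturation. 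The main obstacle is making this product-of-degrees argument uniform across all candidate simple composition factors $S$ and all admissible configurations of the generalized Fitting subgroup $F^{\ast}(G)$; once this is achieved, the $K_5$-plus-rest scenario is impossible and the theorem follows.
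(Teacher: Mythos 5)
Your proposal does not address the statement at hand. Lemma~\ref{lem0} is a classification result about finite simple groups: it asserts that if $S$ is simple with $\Delta(S)$ connected, then $\Delta(S)$ is complete unless $S$ belongs to one of five explicit families. What you have written is instead a strategy for the paper's main theorem (that a $4$-regular prime graph of a nonsolvable group must be $K_5$). The mismatch is not merely cosmetic: your argument explicitly \emph{invokes} Lemma~\ref{lem0} as an ingredient (``then by Lemma~\ref{lem0} $S$ lies in a short list of exceptions''), so it cannot serve as a proof of that lemma without circularity.

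For the record, the paper offers no proof of Lemma~\ref{lem0} at all; it is quoted verbatim from D.~White's survey of degree graphs of simple groups, and any genuine proof would require running through the classification of finite simple groups family by family (sporadic groups via the ATLAS, alternating groups, and each family of groups of Lie type via the known character degree sets), checking in each case whether every pair of primes in $\rho(S)$ divides a common degree. None of that analysis appears in your proposal. If you intend your text as an outline of the main theorem, it belongs with Theorem~\ref{thm:MAIN}, where the paper's actual argument proceeds quite differently: it first reduces to the case where the nonsolvable chief factor is simple (Lemma~\ref{lem:1}), then to $G/N$ almost simple over the solvable radical (Lemma~\ref{lem:2}), and then performs a vertex-count case analysis ($6 \le |\rho(G)| \le 9$) against the explicit list of $4$-regular graphs, rather than the $K_5$-saturation argument you sketch.
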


\begin{lem}Let $S$ be a finite simple group such that $\Delta(S)$ is $k$-regular. Then $\Delta(S)$ is complete or $k=0$.\end{lem}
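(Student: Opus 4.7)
The plan is to rule out, by case analysis, the existence of a simple group $S$ with $\Delta(S)$ being $k$-regular, $k\ge 1$, and not complete. If $\Delta(S)$ is complete the conclusion holds, and if $k=0$ it holds trivially. Hence I assume $\Delta(S)$ is $k$-regular with $k\ge 1$ and not complete, and work toward a contradiction.

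First, I would split on whether $\Delta(S)$ is connected. A $k$-regular graph with $k\ge 1$ has no isolated vertices, so each connected component has at least $k+1$ vertices and is itself $k$-regular. In the disconnected case I would invoke the classification of disconnected prime graphs of simple groups from \cite{White}: the components are very asymmetric (typically one is a single vertex, or consists of a small clique of a different size from another), and no instance permits a common regularity degree $k\ge 1$ across all components.

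In the connected case, Lemma \ref{lem0} restricts $S$ to five families, and I would handle each in turn. For the finite list $\{J_1, M_{11}, M_{23}, \Alt_5, \Alt_6, \Alt_8\}$ I would read $\cd(S)$ off the ATLAS and draw $\Delta(S)$ directly to see that it has at least two vertices of unequal degree. For the Suzuki family ${}^2B_2(q^2)$, the set $\rho(S)\setminus\{2\}$ partitions into three pairwise coprime blocks coming from the factors $q^2-1$, $q^2+\sqrt{2}q+1$, $q^2-\sqrt{2}q+1$; the known character degrees force $2$ to be adjacent to primes of all three blocks while primes in one block have no neighbours in the others, giving $2$ a strictly larger degree. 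For $\PSL_3(q)$ and $\PSU_3(q^2)$, the arithmetic hypothesis $q\mp 1\neq 2^i 3^j$ yields a prime $r$ dividing $q\mp 1$ with $r>3$; using the standard character tables of these groups I would show that the neighbourhood of $r$ in $\Delta(S)$ is strictly contained in the neighbourhood of the defining characteristic $p$, again violating regularity.

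The main obstacle will be the two linear families $\PSL_3(q)$ and $\PSU_3(q^2)$, because the argument must be uniform in $q$. The key input is a careful reading of the generic character degrees, which determines exactly when two primes can appear together in a single degree. Once this adjacency information is in hand, exhibiting two vertices of unequal degree is routine; but extracting the adjacency pattern uniformly in $q$, and ensuring that the prime $r$ provided by the hypothesis is genuinely missing from some neighbourhood, is the delicate point.
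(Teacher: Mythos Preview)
Your overall case split via Lemma~\ref{lem0} matches the paper's, and for the disconnected case the paper simply quotes \cite[Lemma~2.6]{HungReg} (which already gives $k=0$) rather than reanalysing the component structure. The substantive divergence, and the source of a genuine gap, lies in the adjacency structure you assert for the infinite families.

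For ${}^2B_2(q^2)$ you claim that $2$ is adjacent to all three odd-prime blocks while primes in distinct blocks are mutually nonadjacent. By \cite[Theorem~3.3]{White3}, which the paper invokes, the truth is the opposite: the subgraph on $\rho(S)\setminus\{2\}$ is \emph{complete}, and $2$ is adjacent \emph{only} to the primes in $\pi(q^2-1)$. Thus $2$ has strictly \emph{smaller} degree than any prime in $\pi(q^2-1)$, not larger. For $\PSL_3(q)$ and $\PSU_3(q^2)$ the situation is analogous: by \cite[Theorems~3.2 and~3.4]{White2} the primes different from $p$ induce a complete subgraph, while $p$ is adjacent only to the primes dividing $q+1$ or $q^2+q+1$ (respectively $q-1$ or $q^2-q+1$). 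Your proposed containment $N(r)\subsetneq N(p)$, with $r>3$ a prime divisor of $q\mp 1$, is therefore backwards: such an $r$ is adjacent to every prime in $\rho(S)\setminus\{p,r\}$, so $N(p)\subseteq N(r)$, and it is $p$ that has the smaller degree. Your conclusion (two vertices of unequal degree) survives, but only after the facts are corrected; as written the argument does not stand.

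The paper avoids this altogether with a single observation: in each of the three infinite families there is a \emph{complete} vertex, i.e.\ one of degree $|\rho(S)|-1$ --- namely any prime in $\pi(q^2-1)$ for the Suzuki groups, and any prime dividing $q+1$ or $q^2+q+1$ (respectively $q-1$ or $q^2-q+1$) for $\PSL_3$ and $\PSU_3$. A $k$-regular graph possessing a complete vertex must have $k=|\rho(S)|-1$ and hence be complete. This replaces your ``delicate uniform-in-$q$'' neighbourhood comparison with a one-line deduction directly from the cited structure theorems, and explains why the paper regards this step as routine rather than the main obstacle.
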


\begin{proof}
If $\Delta(S)$ is disconnected, then by \cite[Lemma 2.6]{HungReg} $k=0$.

Thus we may assume that $\DG$ is connected.
By ATLAS \cite{ATLAS}, $S$ cannot be one of the groups in (1) or (2). 

If $S$ is one of the groups in (3), then by \cite[Theorem 3.3]{White3}, $\pi(S)=\{2\}\cup \pi(q^2-1)\cup\pi(q^4+1)$ and  the subgraph of $\Delta(S)$ induced by $\pi(S)\setminus\{2\}$ is complete and two is adjacent to precisely the primes in $\pi(q^2-1)$. There is no possibility of $\Delta(S)$ being regular since the primes in $\pi(q^2-1)$ are complete in $\Delta(S)$.

If $S$ is one of the groups in (4), It follows by \cite[Theorem 3.2]{White2} that if $q\neq 4$, then $\pi(S)=\{p\}\cup \pi((q-1)(q+1)(q^2+q+1))$ and the subgraph induced by $\pi((q-1)(q+1)(q^2+q+1))$ is complete and $p$ is adjacent to the primes dividing $q+1$ or $q^2+q+1$. So again $\Delta(S)$ has complete vertices. If $q=4$, by ATLAS \cite{ATLAS}, $\Delta(S)$ is not regular.

If $S$ is as in (5), then by \cite[Theorem 3.4]{White2}, $\pi(S)=\{p\}\cup \pi((q-1)(q+1)(q^2-q+1))$ and the subgraph induced by $\pi((q-1)(q+1)(q^2-q+1))$ is complete and $p$ is adjacent to the primes dividing $q-1$ or $q^2-q+1$. So again $\Delta(S)$ has complete vertices.
\end{proof}

\begin{lem}\label{lem:pent}
Let $S$ be a nonabelian simple group with $|\pi(S)|=5$ and  $\Delta(S)$ a subgraph of the house or the butterfly. Then $\Delta(S)$ is isomorphic to graphs in Figure~\ref{fig:p} and $S\cong \PSL_2(2^f)$, $f\geq 3$ or $\PSL_2(q), q$ a power of an odd prime $p$.

\begin{figure}[htb!]\centering
\begin{tikzpicture}
\vertex (f) at (4,0) {};
\vertex (g) at (5,0) {};
\vertex (h) at (5,1) {};
\vertex (i) at (4,1) {};
\vertex (j) at (4.5,2) {};

\vertex (a) at (7,0) {};
\vertex (b) at (8,0) {};
\vertex (c) at (7,2) {};
\vertex (d) at (8,2) {};
\vertex (e) at (7.5,1) {};
\path
	 (d) edge (c)
	 (a) edge (b)
	 (a) edge (e)
	 (b) edge (e)
	 (c) edge (e)
	 (d) edge (e)
	 (h) edge (i)
	 (g) edge (h)
	 (j) edge (i)
	 (j) edge (h)
	 (f) edge (i)
	 (f) edge (g);
	 \end{tikzpicture}
\caption{The house and the butterfly}
\label{fig:h}
\end{figure}
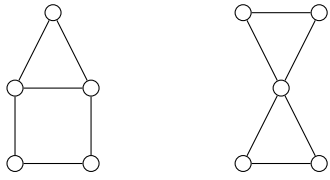\FloatBarrier

\begin{figure}[htb!]\centering
\begin{tikzpicture}
\vertex (a) at (1.5,2) {};
\vertex (b) at (1,0) {};
\vertex (c) at (2,0) {};
\vertex (d) at (1,1) {};
\vertex (e) at (2,1) {};
\path
		(e) edge (d)
		(c) edge (b);

\draw (1.5,-0.5) node{(a)} circle (0);
\draw (4.5,-0.5) node{(b)} circle (0);

\vertex (f) at (4,0) {};
\vertex (g) at (5,0) {};
\vertex (h) at (5,1) {};
\vertex (i) at (4,1) {};
\vertex (j) at (4.5,2) {};
\path
	 (h) edge (i)
	 (g) edge (h)
	 (j) edge (i)
	 (j) edge (h);
	 
\draw (7.5,-0.5) node{(c)} circle (0);

\vertex (k) at (7,0) {};
\vertex (l) at (8,0) {};
\vertex (m) at (8,1) {};
\vertex (n) at (7,1) {};
\vertex (o) at (7.5,2) {};
\path
	 (m) edge (n)
	 (o) edge (m)
	 (o) edge (n)
	 ;
\end{tikzpicture}
\caption{The prime graphs of $\PSL_2(2^6)$, $\PSL_2(5^3)$ and $\PSL_2(2^8)$}
\label{fig:p}
\end{figure}
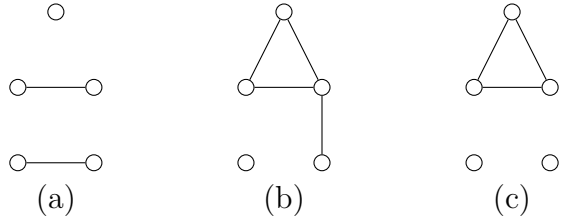\FloatBarrier
\end{lem}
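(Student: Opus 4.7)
The plan is to split the argument according to the connectedness of $\Delta(S)$, exploiting two clean initial observations. First, for any nonabelian simple $S$ the Ito--Michler theorem gives $\rho(S)=\pi(S)$, so $\Delta(S)$ has exactly $5$ vertices. Second, both the house and the butterfly have clique number $3$ (the only triangles being $\{h,i,j\}$ in the house and $\{a,b,e\},\{c,d,e\}$ in the butterfly), so $\Delta(S)$ cannot contain a $K_4$. In particular, if $\Delta(S)$ were complete it would be $K_5\supseteq K_4$, which is impossible; hence $\Delta(S)$ is not complete.

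Suppose next that $\Delta(S)$ is connected but not complete. Lemma~\ref{lem0} then places $S$ in one of five families. The sporadic groups in (1) and the alternating groups in (2) have $|\pi(S)|\in\{3,4,6\}$, ruling them out on the hypothesis $|\pi(S)|=5$. For the Suzuki groups in (3), the structural description used in the proof of the previous lemma shows that $\pi(S)\setminus\{2\}$ induces a complete subgraph of $\Delta(S)$; with $|\pi(S)|=5$ this is $K_4$, contradicting the subgraph hypothesis. The analogous complete-subgraph-on-four-primes argument, applied to the decompositions of $\pi(\PSL_3(q))$ and $\pi(\PSU_3(q^2))$ recalled in the proof of the previous lemma, disposes of (4) and (5).

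It therefore remains to treat the case in which $\Delta(S)$ is disconnected. Here the plan is to invoke the classification of nonabelian simple groups whose character-degree prime graph is disconnected (available in \cite{White}); this forces $S\cong\PSL_2(q)$ for some prime power $q\geq 4$. The structure of $\Delta(\PSL_2(q))$ is well documented: the characteristic prime $p$ is isolated, the primes of $q-1$ induce one clique and those of $q+1$ induce another, the two cliques being disjoint when $q$ is even and meeting only at $\{2\}$ when $q$ is odd.

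To finish, I would enumerate the partitions $(|\pi(q-1)|,|\pi(q+1)|)$ compatible with $|\pi(S)|=5$ and the no-$K_4$ constraint. When $q=2^f$ the admissible partitions $(1,3),(2,2),(3,1)$ yield respectively Figure~\ref{fig:p}(c), Figure~\ref{fig:p}(a), and Figure~\ref{fig:p}(c); sample realisations are $\PSL_2(2^6)$ and $\PSL_2(2^8)$. When $q$ is an odd prime power the shared vertex $\{2\}$ forces the partition $(2,3)$ or $(3,2)$, yielding Figure~\ref{fig:p}(b), realised for instance by $\PSL_2(5^3)$. The main obstacle is the reduction in the disconnected case, which rests on an external classification result; one has to verify carefully that no further simple group with $|\pi(S)|=5$ can have a disconnected prime graph without being $\PSL_2(q)$.
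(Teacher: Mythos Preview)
Your proposal is correct and follows essentially the same route as the paper: observe that the house and butterfly are $K_4$-free, rule out the connected case via Lemma~\ref{lem0} (cases (1)--(2) by $|\pi(S)|\neq 5$, cases (3)--(5) because each forces a $K_4$), and then in the disconnected case reduce to $S\cong\PSL_2(q)$ and analyse the clique structure on $\pi(q-1)$ and $\pi(q+1)$. The paper organises the disconnected analysis by the number of components (two for $q$ odd, three for $q$ even) rather than by your partition enumeration, and cites \cite[Theorem~3.1]{White2} for the reduction to $\PSL_2(q)$, but the substance is identical.
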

\begin{proof}Clearly $\Delta(S)$ is either a house, a butterfly or is obtained by deleting at least one edge. Also, we note that every subgraph of the house or the butterfly is $K_4$-free.
Suppose that $\Delta(S)$ is connected. By \cite{ATLAS}, if $S$ is as in Lemma~\ref{lem0} (1) or (2), then $|\pi(S)|\neq 5$. 

If $S$ is as in (3), then by \cite[Theorem 3.4]{White3}, $S\cong\ ^2B_2(q^2), q^2=2^{2m+1}$. By \cite[Theorem 3.3]{White3} we have that $\Delta(S)$ has a subgraph isomorphic to $K_4$ and a vertex connected to some but not all primes in $V(K_4)$. 

Suppose that $S$ is as in (4) or (5). If $S\cong \PSL_{\ell+1}(q)$ or $S\cong \PSU_{\ell+1}(q^2), \ell\ge 2, q>2$ a power of a prime $p$, then $\Delta(S)$ is a $K_5$ or contains a $K_4$ by \cite[Theorem 3.2]{White2}. 
 Thus $\Delta(S)$ is disconnected.

 Now, assume that $\Delta(S)$ has two connected components. Then by \cite[Theorem 3.1]{White2}, $S\cong \PSL_2(q), q>5$ a power of an odd prime $p$. Then the two connected components are $\{p\}$ and $\pi((q-1)(q+1))$. The component $\pi((q-1)(q+1))$ is complete  if and only if $q-1$ or $q+1$ is a power of 2, otherwise $\Delta(S)$ is as part  2(b) of \cite[Theorem 3.1]{White2}.  In this case, $\Delta(S)$ is disconnected with 2 connected components. We consider any subgraph of the two graphs with five vertices and two connected components (of course must have an isolated vertex). By \cite[Theorem 3.1]{White2}, $\Delta(S)$ will have an isolated vertex and a four-vertex component with a complete vertex since 2 is a complete vertex in the subgraph with vertices $\pi(q^2-1)$. This is as in Figure~\ref{fig:p}(b). For example, $$\cd(\PSL_2(5^3))=\{1,5^3-1,5^3+1,5^3,(5^3+1)/2\}.$$ Observe that $\Delta(\PSL_2(5^3))$ is isomorphic to the graph in Figure~\ref{fig:p}(b). 

Now we may suppose that $\Delta(S)$ has three connected components. Then $S\cong \PSL_2(2^f)$, $f\ge 3$ so that $\Delta(S)$ has three connected components with at least one isolated vertex. The remaining component(s) is (are) complete. The only possibilities are as in Figure~\ref{fig:p}(a) and (c).
An example for graph (a) is when $q=2^6$. By \cite{ATLAS} we have  that $$\cd(\PSL_2(2^6))=\{1,2^6,2^6-1,2^6+1\}=\{1,2^6,5\cdot 13, 3^2\cdot 7\}.$$  In this case we obtain the graph in Figure~\ref{fig:p}(a). 
%
\end{proof}

The following results will be used throughout this paper.

\begin{lem}\textup{\cite[Lemma 4.2]{TVtri}}\label{TVtri}
Let $N$ be a normal subgroup of a group $G$ such that $G/N\cong S$, where $S$ is a nonabelian simple group. Let $\theta\in \Irr(N)$. Then either $\psi(1)/\theta(1)$ is divisible by two distinct in $\pi(S)$ for some $\psi\in \Irr(G|\theta)$ or $\theta$ extends to a $\theta_0\in \Irr(G)$ and $S\cong \Alt_5$ or $\PSL_2(8)$.
\end{lem}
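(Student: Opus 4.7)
The plan is to combine Clifford theory with a classification-type input on nonabelian simple groups whose character degrees are all prime powers. Let $T=I_G(\theta)$ denote the inertia subgroup of $\theta$ in $G$. Since $G/N\cong S$ is simple and $N\le T$, either $T/N$ is a proper subgroup of $S$ or $T=G$. By Clifford's theorem, every $\psi\in\Irr(G|\theta)$ is induced from a unique $\eta\in\Irr(T|\theta)$, giving
\[
\frac{\psi(1)}{\theta(1)}=[G:T]\cdot\frac{\eta(1)}{\theta(1)}.
\]

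First I would handle the non-invariant case $T<G$. Here $[G:T]=[S:T/N]>1$ divides $\psi(1)/\theta(1)$ for every $\psi\in\Irr(G|\theta)$. Running through the subgroup structure of a nonabelian simple group $S$, one sees that $[S:T/N]$ already contributes two distinct primes of $\pi(S)$ except in a short controlled list of prime-power-index actions; in those borderline situations the missing second prime has to come from the factor $\eta(1)/\theta(1)$, via the character theory of $T/N$ (or of a suitable central extension determined by the obstruction of $\theta$ to extending from $N$ to $T$).

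The bulk of the work occurs when $T=G$, so that $\theta$ is $G$-invariant. Here $\Irr(G|\theta)$ is in bijection with the irreducible projective characters of $S$ whose cocycle is the obstruction class $\omega_\theta\in H^2(S,\mathbb{C}^\times)$ of $\theta$, and $\psi(1)/\theta(1)$ equals the projective degree. If $\omega_\theta$ is trivial, then $\theta$ extends to some $\theta_0\in\Irr(G)$ and Gallagher's theorem identifies $\Irr(G|\theta)$ with $\{\theta_0\widetilde\chi:\chi\in\Irr(S)\}$, so $\psi(1)/\theta(1)=\chi(1)$, and the desired conclusion holds unless every nonlinear irreducible character of $S$ has prime-power degree. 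If $\omega_\theta$ is nontrivial, I would pass to the Schur cover $\widetilde S$ of $S$ and examine its faithful irreducible characters, reducing the claim to showing that some such character has degree divisible by two distinct primes of $\pi(S)$. The decisive input is the classification of nonabelian simple groups for which every relevant character has prime-power degree: the only possibilities are $S\cong\Alt_5$ (since $\cd(\Alt_5)=\{1,3,4,5\}$) and $S\cong\PSL_2(8)$ (since $\cd(\PSL_2(8))=\{1,7,8,9\}$). For $\PSL_2(8)$ the Schur multiplier is trivial, so $\omega_\theta$ is automatically trivial and $\theta$ extends; for $\Alt_5$ the Schur cover $2.\Alt_5$ has a faithful character of degree $6$, divisible by the two primes $2$ and $3$ of $\pi(\Alt_5)$, so any nontrivial-obstruction case already lies in the first alternative --- forcing $\theta$ to extend whenever the exceptional alternative is in force.

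The hardest part will be the verification that, for every nonabelian simple group $S$ outside $\{\Alt_5,\PSL_2(8)\}$, either $S$ itself or its Schur cover carries an irreducible character whose degree has two distinct prime divisors in $\pi(S)$. This is essentially where the classification of finite simple groups is invoked, through a case-by-case inspection of character tables and Schur multipliers; the same inspection also delivers the non-invariant case when the index $[S:T/N]$ is a prime power.
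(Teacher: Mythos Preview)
The paper does not supply its own proof of this lemma; it is imported verbatim from \cite[Lemma~4.2]{TVtri} and used throughout as a black box. There is therefore nothing in the present paper to compare your proposal against.

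On its own merits, your outline is sound and follows the standard route to this result. The split into $T<G$ versus $T=G$, the use of Gallagher's theorem when $\theta$ extends, the passage to projective characters of the Schur cover when it does not, and the identification of $\Alt_5$ and $\PSL_2(8)$ as the only nonabelian simple groups all of whose ordinary irreducible degrees are prime powers --- these are exactly the required ingredients. Your explanation of why the exceptional alternative forces $\theta$ to extend (the faithful degree-$6$ character of $2.\Alt_5$ supplies two primes when the obstruction is nontrivial, and the Schur multiplier of $\PSL_2(8)$ is trivial) is correct and is precisely how the original argument closes that loop.

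One comment on the non-invariant case $T<G$: you describe ``running through the subgroup structure'' to see that $[S:T/N]$ is seldom a prime power, and in the borderline cases locating the missing prime inside $\eta(1)/\theta(1)$. In practice this step is organised via Guralnick's classification \cite{Guralnick} of subgroups of prime-power index in simple groups, usually after first enlarging $T/N$ to a maximal subgroup of $S$; the residual list (certain point stabilisers in $\Alt_n$, parabolic subgroups in small-rank linear groups, and a handful of sporadic cases) is short enough to finish by direct inspection of character degrees. This is bookkeeping rather than a new idea, so your plan would go through as written.
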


The following result will be referred to as the Gallagher's Theorem.

\begin{thm}\textup{\cite[Corollary 6.17]{Isaacs}}\textup{[Gallagher's Theorem]}
Let $N$ be a normal subgroup of a group $G$ and let $\theta$ be an irreducible character of $N$. If $\theta$ is extendible to $G$, then the character $\theta\psi$ for $\psi\in \Irr(G/N)$ are irreducible, distinct for distinct $\psi$ and are all of the irreducible constituents of $\theta^G$.
\end{thm}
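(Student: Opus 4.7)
The plan is to establish the key identity $\theta^G = \sum_{\psi \in \Irr(G/N)}\psi(1)(\hat\theta\psi)$, where $\hat\theta\in\Irr(G)$ denotes the chosen extension of $\theta$, and then exploit positivity of character inner products to deduce irreducibility, pairwise distinctness, and exhaustiveness all at once.

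First I would observe that for each $\psi\in\Irr(G/N)$ (inflated to a character of $G$), the pointwise product $\hat\theta\psi$ is a character of $G$, since it is the character of the tensor product of the underlying representations. Its restriction to $N$ is $\psi(1)\theta$, so every irreducible constituent of $\hat\theta\psi$ lies over $\theta$. Next I would compute $(1_N)^G$ directly from the induction formula: since $N\tri G$, one finds $(1_N)^G(g)=[G:N]$ for $g\in N$ and $0$ otherwise, i.e.\ $(1_N)^G$ is the inflation of the regular character $\rho_{G/N}$ of $G/N$. Because $\theta$ is $G$-invariant (as it extends to $\hat\theta$), I would then verify, by evaluating both sides separately on $N$ and on $G\setminus N$, the identity
\[
\theta^G \;=\; \hat\theta\cdot(1_N)^G \;=\; \hat\theta\cdot\rho_{G/N} \;=\; \sum_{\psi\in\Irr(G/N)}\psi(1)\,(\hat\theta\psi).
\]

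Next I would compute $(\theta^G,\theta^G)_G$ in two ways. By Frobenius reciprocity together with $G$-invariance,
\[
(\theta^G,\theta^G)_G \;=\; (\theta,(\theta^G)|_N)_N \;=\; (\theta,[G:N]\theta)_N \;=\; [G:N].
\]
On the other hand, substituting the identity above,
\[
[G:N] \;=\; \sum_{\psi,\tau\in\Irr(G/N)}\psi(1)\tau(1)\,\bigl(\hat\theta\psi,\hat\theta\tau\bigr)_G.
\]
Each $(\hat\theta\psi,\hat\theta\tau)_G$ is a non-negative integer, and each diagonal term satisfies $(\hat\theta\psi,\hat\theta\psi)_G\geq1$. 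Since $\sum_\psi\psi(1)^2=|G/N|=[G:N]$, the only way the right-hand side can match the left is if $(\hat\theta\psi,\hat\theta\psi)_G=1$ for every $\psi$ and all off-diagonal inner products vanish. This simultaneously gives irreducibility of each $\hat\theta\psi$ and distinctness for distinct $\psi$.

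Finally, the identity $\theta^G=\sum_\psi\psi(1)(\hat\theta\psi)$ now reads as a decomposition into pairwise distinct irreducible characters with positive multiplicities, so every irreducible constituent of $\theta^G$ is of the form $\hat\theta\psi$ for some $\psi\in\Irr(G/N)$. I expect the main technical point to be a clean verification of the identity $\theta^G=\hat\theta\cdot(1_N)^G$: the case $g\notin N$ is immediate from normality (both sides vanish), and the case $g\in N$ uses that $\theta^G(n)=[G:N]\theta(n)$ from $G$-invariance and that $\hat\theta(n)=\theta(n)$. Once that identity is in hand, the rest is forced by the integrality/positivity squeeze on the inner products of the $\hat\theta\psi$.
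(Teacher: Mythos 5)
The paper does not prove this statement at all: it is quoted verbatim as \cite[Corollary 6.17]{Isaacs} and used as a black box, so there is no internal proof to compare against. Your argument is correct and complete, and it is essentially the standard proof of Gallagher's theorem: the identity $\theta^G=\hat\theta\cdot(1_N)^G=\sum_{\psi}\psi(1)\,\hat\theta\psi$ is the projection formula combined with $(1_N)^G$ being the inflated regular character of $G/N$, and the positivity/integrality squeeze against $(\theta^G,\theta^G)_G=[G:N]$ (valid because $G$-invariance of $\theta$ gives $(\theta^G)_N=[G:N]\theta$) correctly forces each $\hat\theta\psi$ to be irreducible, the $\hat\theta\psi$ to be pairwise distinct, and the list to exhaust $\Irr(G\mid\theta)$. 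No gaps.
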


\begin{thm}\textup{\cite[Theorem 11.7]{Isaacs}}\label{TSchur}
Let $G$ be a group and $N\unlhd G$. Let $\theta\in \Irr(N)$ be invariant in $G$. If the Schur multiplier of $G/N$ is trivial, then $\theta$ extends to $G$.
\end{thm}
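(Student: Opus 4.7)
The plan is to construct the extension through projective representations and then invoke the triviality of the Schur multiplier of $G/N$ to eliminate the projective obstruction. Fix an irreducible representation $\mathcal{X}$ of $N$ over $\mathbb{C}$ affording $\theta$. Since $\theta$ is $G$-invariant, for each $g\in G$ the conjugate representation $n\mapsto \mathcal{X}(g^{-1}ng)$ is equivalent to $\mathcal{X}$, so Schur's lemma supplies an invertible matrix $\mathcal{P}(g)$, unique up to scalar, with $\mathcal{X}(g^{-1}ng)=\mathcal{P}(g)^{-1}\mathcal{X}(n)\mathcal{P}(g)$ for every $n\in N$. After normalising by $\mathcal{P}(n)=\mathcal{X}(n)$ on $N$, this yields a projective representation $\mathcal{P}\colon G\to \GL_{\theta(1)}(\mathbb{C})$ that extends $\mathcal{X}$.

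Next, I would consider the associated factor set $\alpha\colon G\times G\to\mathbb{C}^\times$ defined by $\mathcal{P}(g)\mathcal{P}(h)=\alpha(g,h)\mathcal{P}(gh)$, which measures the failure of $\mathcal{P}$ to be an ordinary representation. The key step is to show that, after choosing the $\mathcal{P}(g)$ consistently on cosets of $N$, the factor set is inflated from a $2$-cocycle on $G/N$: since $\mathcal{P}$ is already ordinary on $N$ and each $\mathcal{P}(g)$ is determined only up to a scalar, a direct computation using $\mathcal{P}(gn)=\mathcal{P}(g)\mathcal{X}(n)$ and $\mathcal{P}(ng)=\mathcal{X}(n)\mathcal{P}(g)$ gives $\alpha(g_1n_1,g_2n_2)=\alpha(g_1,g_2)$ for $n_1,n_2\in N$. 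Hence the cohomology class $[\alpha]$ lies in $H^2(G/N,\mathbb{C}^\times)$, which is canonically the Schur multiplier of $G/N$.

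Finally, since by hypothesis the Schur multiplier of $G/N$ is trivial, $[\alpha]$ is the trivial class, so there exists $\mu\colon G\to\mathbb{C}^\times$, constant on cosets of $N$ and equal to $1$ on $N$, such that $\alpha(g,h)=\mu(g)\mu(h)\mu(gh)^{-1}$. Setting $\mathcal{Y}(g):=\mu(g)^{-1}\mathcal{P}(g)$ then produces an ordinary representation of $G$ with $\mathcal{Y}|_N=\mathcal{X}$; because $\mathcal{X}$ is irreducible, any $G$-invariant subspace is already $N$-invariant, so $\mathcal{Y}$ is irreducible, and its character is an irreducible character of $G$ extending $\theta$. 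The hardest step will be the careful bookkeeping that shows $\alpha$ really does descend to a well-defined cocycle on $G/N$ and that its cohomology class is independent of the scalar choices made in defining the $\mathcal{P}(g)$; once that identification with the Schur multiplier is in place, the hypothesis closes the argument immediately.
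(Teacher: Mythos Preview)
Your argument is correct and is essentially the classical proof: lift $\theta$ to a projective representation of $G$, observe that the associated factor set is inflated from a $2$-cocycle on $G/N$, and kill that cocycle using the hypothesis $H^2(G/N,\mathbb{C}^\times)=1$. Note, however, that the paper does not give its own proof of this statement; it merely quotes it as \cite[Theorem~11.7]{Isaacs} and uses it as a black box. The route you outline is exactly the one Isaacs takes (in the language of factor sets and projective representations developed in his Chapter~11), so there is nothing to compare: your proposal simply reconstructs the cited reference rather than an argument present in this paper.
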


In \cite{Hupp}, Huppert determined all the simple groups whose orders are divisible by four primes. The results were summarized in Table 2 and Table 3 of \cite{Hupp}. The groups that were not featured in the tables then will be one of the groups listed in the Lemma below:

\begin{lem}\label{four}
Let $S$ be a finite nonabelian simple group with $|\pi(S)|=4$ and $\pi(S)\neq \pi(G)$, where $G$ is a group in \cite[Table 2,3]{Hupp}, then exactly one of the following occurs:
\begin{enumerate}
\item[(i)] $S\cong \PSL_2(r)$ where $r=\max(\pi(S))$
\item[(ii)] $S\cong \PSL_2(2^s)$ for some prime $s$ such that $2^s-1=\max(\pi(S))$ is a Mersenne prime;
\item[(iii)] $S\cong \PSL_2(3^t)$ for some prime $t\ge 5$ such that $\pi(S)=\{2,3,p,q\}$
\end{enumerate}
\end{lem}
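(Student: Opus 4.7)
The plan is to reduce to the $\PSL_2(q)$ family by quoting Huppert's classification, and then to perform a case analysis on the prime power $q$. Huppert's Tables 2 and 3 in \cite{Hupp} list every finite nonabelian simple group with $|\pi(S)|=4$ whose prime set is not already covered by some $\PSL_2(q)$, so the hypothesis $\pi(S)\neq \pi(G)$ for every $G$ in those tables forces $S\cong \PSL_2(q)$ for some prime power $q=p^f$. From here, I would use the factorisation $|\PSL_2(q)| = q(q-1)(q+1)/\gcd(2,q-1)$ to write $\pi(S) = \{p\}\cup \pi(q-1)\cup \pi(q+1)$, and impose $|\pi(S)|=4$.

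The case analysis then splits according to $(p,f)$. If $f=1$, then $q=p$ is prime; writing $r=p$ gives option (i), and the fact that $r>(r+1)/2\geq \max\pi((r^2-1)/2)$ automatically makes $r=\max\pi(S)$. If $p=2$, one has $|\pi(2^s-1)|+|\pi(2^s+1)|=3$; using that $2^a-1\mid 2^s-1$ whenever $a\mid s$, I would show that $s$ must itself be prime, and moreover $2^s-1$ must be prime in order to be the maximum of $\pi(S)$ while keeping the prime count at four, yielding option (ii). Finally, for $p$ odd and $f\geq 2$, I would examine the factorisations of $p^f\pm 1$ via cyclotomic polynomials and compare the resulting candidates against Huppert's tables; the only infinite family that survives this comparison is $\PSL_2(3^t)$ with $t\ge 5$ prime and $\pi(S)$ of the form $\{2,3,p,q\}$, giving option (iii).

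The main obstacle will be the last subcase, $p$ odd with $f\geq 2$: one has to show that for every $p\geq 5$ and $f\geq 2$, either $|\pi(\PSL_2(p^f))|\neq 4$ or the group is already recorded in \cite[Tables 2, 3]{Hupp}, and also to exclude the small values $t\in\{2,3\}$ and $t=4$ in the $p=3$ analysis so that only $t\ge 5$ (necessarily prime to keep the factor count tight) remains. This is essentially a finite but delicate bookkeeping argument, handled by using Zsygmondy-type prime divisors of $p^{2f}-1$ together with the explicit data in Huppert's tables. Once this residual check is complete, the three surviving families are precisely (i), (ii), (iii), completing the proof.
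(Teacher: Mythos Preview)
The paper does not supply a proof of this lemma at all: it is presented as a direct reading of Huppert and Lempken's classification \cite{Hupp}, where Tables~2 and~3 enumerate the finitely many exceptional prime-sets and the surrounding text identifies the three infinite $\PSL_2$-families that remain. Your proposal is therefore already attempting more than the paper does; there is no argument in the paper to compare against beyond the bare citation.

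Your overall strategy---reduce to $\PSL_2(q)$ via the tables, then split on $(p,f)$---is the natural one, and the $f=1$ case is fine. However, the justification you give in the $p=2$ case is muddled. You write that ``$2^s-1$ must be prime in order to be the maximum of $\pi(S)$ while keeping the prime count at four,'' but nothing forces the maximum to come from $2^s-1$ rather than from $2^s+1$. What is actually needed is the dichotomy $\bigl(|\pi(2^s-1)|,|\pi(2^s+1)|\bigr)\in\{(1,2),(2,1)\}$. In the $(1,2)$ case one checks that $2^s-1$ is in fact prime (a proper prime power $2^s-1=p^k$ with $k\ge 2$ is impossible), whence $s$ is prime and (ii) holds. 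In the $(2,1)$ case $2^s+1$ is a prime power, forcing $s=3$ (only three primes, discarded) or $2^s+1$ a Fermat prime with $s$ a power of $2$; the only such $s$ with $|\pi(2^s-1)|=2$ is $s=4$, and $\PSL_2(16)$ lies in Huppert's tables. So your conclusion is right, but the route to it is not the one you sketched. The residual $p$ odd, $f\ge 2$ bookkeeping you flag as the main obstacle is exactly what the paper outsources wholesale to \cite{Hupp}.
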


Let $S$ be one of the groups in Lemma~\ref{four} above. Since the outer automorphism of $S\cong \PSL_2(p^f),$ ($p$ a prime and $f\ge 1$ an integer) is of order $(2,p-1)\cdot f$. With this in mind, we deduce that $|\pi(\Out(S))\setminus \pi(S)|\le 1$. 

\section{Nonsolvable groups}

\begin{lem}\label{lem:join}
Let $G$ and $H$ be groups and $\mathfrak{D}:=\rho(G)\cap\rho(H)$. Suppose that $\rho(H)\setminus \mathfrak{D}$ spans a complete subgraph of $\Delta(H)$. Then  $\Delta(H)$ is a complete subgraph of $\Delta(G\times H)$. Moreover, $\Delta(G\times H)$ contains at 	least $|\rho(H)|$ complete vertices.
\end{lem}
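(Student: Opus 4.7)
My plan is to exploit the elementary fact that $\cd(G\times H)=\{ab : a\in\cd(G),\, b\in\cd(H)\}$, whence $\rho(G\times H)=\rho(G)\cup\rho(H)$. In particular, $\cd(G)\subseteq\cd(G\times H)$ (take $b=1$) and $\cd(H)\subseteq\cd(G\times H)$ (take $a=1$), and a product of two distinct primes $pq$ divides some element of $\cd(G\times H)$ whenever we can produce one factor from $\cd(G)$ and the other from $\cd(H)$ (or pick up both from a single degree in $\cd(G)$ or $\cd(H)$).

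To show that $\Delta(H)$ sits as a complete subgraph of $\Delta(G\times H)$ I would take two arbitrary distinct primes $p,q\in\rho(H)$ and split according to their membership in $\mathfrak{D}$. If both lie in $\rho(H)\setminus\mathfrak{D}$, the hypothesis gives $p\sim q$ already in $\Delta(H)$, hence in $\Delta(G\times H)$ since $\cd(H)\subseteq\cd(G\times H)$. Otherwise at least one of them, say $p$, lies in $\mathfrak{D}\subseteq\rho(G)$: choose $a\in\cd(G)$ with $p\mid a$ and $b\in\cd(H)$ with $q\mid b$, and observe that $pq\mid ab\in\cd(G\times H)$ because $p\neq q$. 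This covers the remaining subcases uniformly (including $p,q\in\mathfrak{D}$).

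For the \emph{moreover} assertion it suffices to show each $p\in\rho(H)$ is a complete vertex of $\Delta(G\times H)$. Adjacency of $p$ to other vertices of $\rho(H)$ was just established; for a vertex $q\in\rho(G)\setminus\rho(H)$, which automatically lies outside $\mathfrak{D}$, pick $a\in\cd(G)$ with $q\mid a$ and $b\in\cd(H)$ with $p\mid b$, so that $pq\mid ab\in\cd(G\times H)$. Thus every prime in $\rho(H)$ is adjacent to every other vertex of $\Delta(G\times H)$, supplying the claimed $|\rho(H)|$ complete vertices.

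There is essentially no real obstacle here: the statement is a small bookkeeping exercise about the multiplicative structure of $\cd(G\times H)$, and the hypothesis on $\rho(H)\setminus\mathfrak{D}$ is tailor-made to patch the one situation that is not automatic from the product construction, namely when both primes avoid $\rho(G)$.
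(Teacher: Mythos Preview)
Your proof is correct and follows essentially the same approach as the paper: both arguments exploit the product structure $\cd(G\times H)=\{ab:a\in\cd(G),\,b\in\cd(H)\}$ to manufacture degrees divisible by the required prime pairs, using the hypothesis on $\rho(H)\setminus\mathfrak{D}$ exactly where two primes both avoid $\rho(G)$. Your case analysis is slightly more explicit than the paper's, but the underlying idea is identical.
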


\begin{proof}
Clearly $\mathfrak{D}$ spans a complete subgraph of $\Delta:=\Delta(G\times H)$. It suffices to show that $\rho(H)-\mathfrak{D}$ is adjacent to every prime in $\mathfrak{D}$. Since $\mathfrak{D}\subseteq \rho(G)$, we must have $\lambda_i(1)\in \cd(G)$ for $i=1,2,\ldots, |\mathfrak{D}|$, not necessarily distinct such that $p_i|\lambda_i(1)$ for each $i$, all $p_i$'s in $\mathfrak{D}$. Also, for each $q_i\in \rho(H)\setminus \mathfrak{D}$ there is a $\theta_j\in \Irr(H)$ such that $q_j|\theta_j(1)$, $j=1,2,\ldots,|\rho(H)\setminus \mathfrak{D}|$. Since $\cd(G\times H)=\{\eta(1)\vartheta(1) |\eta(1)\in \cd(G), \vartheta(1)\in \cd(H)\}$, we must have that $\theta_j(1)\lambda_i(1)\in \cd(G\times H)$ for each $i$ and each $j$. In fact we can observe that the primes in $\mathfrak D$ are complete vertices of $\Delta(G\times H)$. 

To see the second part, Let $p\in \rho(H)\setminus \mathfrak D$. We show that it is adjacent to every other prime in $\rho(G)$. Let $p|\lambda(1)$ for some $\lambda\in \Irr(H)$, then $\theta (1)\lambda(1)\in \cd(G\times H)$ for every $\theta\in \Irr(G)$.

\end{proof}

\begin{lem}\label{lem:1}
Let $G$ be a nonsolvable group and $\DG$ be 4-regular with 
more than 5 vertices.  Then every nonsolvable chief factor of $G$ is  simple. 
\end{lem}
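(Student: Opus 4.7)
The plan is to argue by contradiction: suppose $G$ admits a nonsolvable chief factor $M/N \cong S^k$ with $k \geq 2$ and $S$ a nonabelian simple group, and exhibit a vertex of $\DG$ of degree at least $5$.

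The first step exploits $k \geq 2$ to show that $\rho(S)$ spans a clique in $\DG$. For any two distinct primes $p,q \in \rho(S)$, I would pick $\chi_1, \chi_2 \in \Irr(S)$ with $p \mid \chi_1(1)$ and $q \mid \chi_2(1)$, and form
\[
\theta := \chi_1 \otimes \chi_2 \otimes 1_S \otimes \cdots \otimes 1_S \in \Irr(S^k) \cong \Irr(M/N),
\]
whose degree $\chi_1(1)\chi_2(1)$ is divisible by $pq$. Inflating $\theta$ through $M \twoheadrightarrow M/N$ and invoking Clifford theory on any $\psi \in \Irr(G|\theta)$ gives $\theta(1) \mid \psi(1)$, so $pq \mid \psi(1)$ and $p \sim q$ in $\DG$. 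Since $\rho(S) = \pi(S)$ for nonabelian simple $S$ by Ito--Michler, and $|\pi(S)| \geq 3$ by Burnside's $p^aq^b$ theorem, $\rho(S)$ is a clique of size at least $3$ in $\DG$. In particular, if $|\rho(S)| \geq 6$, a vertex of this clique already has degree $\geq 5$, contradicting $4$-regularity.

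The remaining cases $|\rho(S)| \in \{3, 4, 5\}$ form the main obstacle, since the internal clique alone does not exceed degree $4$. Because $|\rho(G)| > 5 \geq |\rho(S)|$, some prime $r \in \rho(G) \setminus \rho(S)$ is witnessed by a character of $G$, so I would try to force an edge of $\DG$ joining $\rho(S)$ to $\rho(G) \setminus \rho(S)$. The plan is to exploit the $G$-action on the $k$ simple factors of $M/N$, which is non-trivial for $k \geq 2$: a non-$G$-invariant $\theta' \in \Irr(M/N)$ produces a Clifford correspondent $\psi'$ of degree $[G : I_G(\theta')] \cdot e \cdot \theta'(1)$, and the nontrivial orbit index $[G : I_G(\theta')]$ should contribute further prime divisors; alternatively, when the relevant $\theta'$ is $G$-invariant, Theorem~\ref{TSchur} extends it to $G$ and Gallagher's theorem multiplies by some $\lambda \in \Irr(G/M)$ whose degree supplies $r$. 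Combining any such external edge with the internal clique pushes some vertex of $\rho(S)$ to degree at least $5$, the desired contradiction. The sharpest case is $|\rho(S)| = 5$, where the clique exactly saturates $4$-regularity, so any external adjacency at all yields the contradiction; pinning down such an adjacency is the delicate point, and will require careful use of Lemma~\ref{TVtri} applied to the nonsolvable composition factors of $G/M$.
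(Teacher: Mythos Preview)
Your opening move is sound and matches the paper in spirit: assume a nonsolvable chief factor $M/N\cong S^k$ with $k\ge 2$ and exhibit too many adjacencies. Your tensor-product argument correctly produces a clique on $\rho(S)$ in $\DG$; the paper gets the same conclusion (in fact slightly more) by quoting \cite{LewisCh}, which says $\Delta(G/C)$ is complete when $k\ge 2$, where $C/N=C_{G/N}(M/N)$. An observation you are missing is that a $4$-regular graph on more than five vertices is automatically $K_5$-free: a $K_5$ component would force $\DG$ to be a disjoint union of $K_5$'s, contradicting the connectivity result \cite[Lemma~2.6]{HungReg}. This disposes of $|\rho(S)|\ge 5$ immediately, so your ``sharpest case'' $|\rho(S)|=5$ never arises.

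The genuine gap is in your treatment of $|\rho(S)|\in\{3,4\}$. A single external edge from some $r\notin\rho(S)$ into the clique does not push any vertex past degree~$4$, so your plan as stated is insufficient. More seriously, the two mechanisms you propose are not correctly aimed. There is no reason for $G/M$ to have nonsolvable composition factors, so invoking Lemma~\ref{TVtri} ``on the nonsolvable composition factors of $G/M$'' may be vacuous; and your Gallagher/orbit-index discussion concerns characters of $M/N$ or $G/M$, whereas the external prime $r$ may live only in $\rho(N)$. The paper's fix is to work with the centraliser quotient: since $G/C$ has trivial abelian normal subgroups, $\pi(G/C)=\rho(G/C)$, so any $x\in\rho(G)\setminus\rho(G/C)$ satisfies $x\nmid|G/C|$ and hence $x\in\rho(C)$. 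One then chooses $L\le MC$ with $L/C\cong S$ and applies Lemma~\ref{TVtri} to $\theta\in\Irr(C)$ with $x\mid\theta(1)$: either $\theta$ extends (forcing $x$ adjacent to all of $\pi(S)$) or some $\psi\in\Irr(L\mid\theta)$ has $\psi(1)/\theta(1)$ divisible by two primes of $\pi(S)$. Now there are at least $|\rho(G)|-4\ge 2$ external primes, each contributing at least two edges into $\pi(S)\subseteq\rho(G/C)$, and a short pigeonhole/case analysis on the possible landing patterns forces some vertex of the clique to degree~$\ge 5$. That counting, not a single external edge, is what closes the argument.
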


\begin{proof} Suppose that $\DG$ contains a subgraph isomorphic to $K_5$. Then since $\DG$ is  4-regular, we must have that $\DG$ is isomorphic to $K_5$ or is a disjoint union of $K_5$'s. 
This contradicts \cite[Lemma 2.6]{HungReg}. We may assume that $\DG$ is $K_5$-free. Let $M/N=S_1\times \cdots \times S_k$, where $S_i\cong S$ a nonabelian simple group, be a chief factor of $G$ and $C/N=C_{G/N}(M/N)$. Then $G/C$ contains a minimal subgroup isomorphic to $S^k$. We show that $k=1$. By a way of contradiction, assume that $k\ge 2$. Let $L\leq MC$ be such that $L/C\cong S$. Since $G/C$ has no nontrivial abelian normal subgroups we have that $\pi(G/C)=\rho(G/C)$. By \cite[Main Theorem]{LewisCh} we have that $\Delta(G/C)$ is complete if $k\geq 2$ hence $|\pi(G/C)|\leq 4$. We must have that $\rho(G/C)=\{p_i\}_{i=1}^n$ where $n=3,4$.
\vspace{0.3cm}

\noindent{\bf Case 1: $|\rho(G/C)|=4$} 

Suppose that $x\in \rho(G)\setminus \rho(G/C)=:\rho$. It follows that $x\in \rho(C)$. Then there exists $\theta\in \Irr(C)$ such that $x\in \pi(\theta(1))$. By Lemma~\ref{TVtri}, either $\theta$ extends to $\theta_0\in \Irr(L)$ or $\psi(1)/\theta(1)$ is divisible by two distinct primes in $\pi(L/C)$ for some $\psi\in \Irr(L|\theta)$. We consider the two cases separately:\vspace{0.23cm}

\noindent{\bf Subcase 1: $\theta$  extends to $L$}\vspace*{0.23cm}

If this holds then all the primes in $\rho(S)$ are adjacent to $x$. Thus if $|\rho(S)|=4$, then $\DG$ will contain a $K_5$, thus we must have that $|\rho(S)|=3$. Since $|\rho(G)|\geq 6$, there is a prime $x\neq y\in \rho$. So we have that $y$ is adjacent to all primes in $\pi(S)$ or $y$ is adjacent to two primes in $\pi(S)$. If the former occurs then the primes in $\pi(S)$ will have degree $\geq 5$. Assume that the latter occurs. Then $y$ is adjacent to two primes in $\pi(S)$. This two primes will have degree $\geq 5$.\vspace*{0.3cm}

\noindent{\bf Subcase 2: $x$ adjacent to two primes in $\rho(S)$}\vspace*{0,23cm}

 If this occurs then $x$ is adjacent to two primes in $\pi(L/C)$, say $p_1, p_2$ which implies that $\{x,p_1,p_2\}$ forms a triangle. Since $|\rho(G)|\geq 6$, there is another prime $x\neq y\in \rho$ such that $y$ is adjacent to all primes in $\pi(S)$ or $y$ is adjacent to two primes in $\pi(S)$. If the former occurs, then we have two primes with degree greater than or equal to 5. Thus we may assume that the latter occurs. In this case we must have that $|\rho(G)|\leq 6$ or else we can find a different prime in $\rho$ and obtain a contradiction. If $|\rho(G)|=6$, then we require that the neighbours of $y$ be different from those of $x$.  In this case we have a graph with 6 vertices and contains a $K_4$. This graph cannot be 4-regular. \vspace*{0.23cm}
 

\noindent\textbf{Case 2: $|\rho(G/C)|=3$}\vspace*{0.23cm}

Define $\delta:=\rho(G)\setminus \pi(S)$ and observe that if $|\rho(G)|\ge 7$, then $|\delta|\geq 4$. Using the arguments used above. Each prime in $\delta$ is either adjacent to all primes in $\pi(S)$ or is adjacent to 2 primes in $\pi(S)$. Either way, we observe that there is at least one prime in $\pi(S)$ with degree $\geq 5$. The proof is now complete. 

Suppose that $|\rho(G)|=6$ such that $\delta=\{x,y,z\}$ and $\pi(S)=\{p_i\}_{i=1}^3$. Notice that $\delta\in \rho(C)$. If  $C$ is solvable then $\delta$ must span at least an edge, say $x\sim y$. Let $\vartheta\in \Irr(C)$ be such that $xy|\vartheta(1)$. By Lemma~\ref{TVtri} we have that $\vartheta$ extends to $\vartheta_0\in \Irr(L)$ or $\psi(1)/\vartheta(1)$ is divisible by two distinct primes in $\pi(S)$. In either way we obtain a contradiction. So, we may assume that $C$ is nonsolvable. If $\delta$ spans at least an edge, then by previous argument we obtain a contradiction. Suppose that it doesn't span an edge, it follows by \cite[Theorem 4.1]{LewisCo} that $C\cong \PSL_2(2^f)\times A$ for some $f\geq 2$ and $A$ abelian. This implies that $|\rho(C)|=|\pi(\PSL_2(2^f))|>3$ since $2\in \pi(S)\cap \pi(\PSL_2(2^f))$. Since 2 is an isolated vertex for $\Delta(\PSL_2(2^f))$, it implies either $\Delta(C)$ has 4 disconnected components, or $\delta$ spans at least an edge, contradicting our assumption. This final contradiction implies that $k=1$.

%
%
\end{proof}

\begin{lem}\label{lem:2}Let $G$ satisfy the hypothesis of Lemma~\ref{lem:1}, with $6\le |\rho(G)|\le 9$. Let $N\unlhd G$ be the solvable radical for $G$. If $M/N$ is a chief factor of $G$, then $G/N$ is almost simple with socle $M/N$. 
\end{lem}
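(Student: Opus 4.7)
My plan is to argue by contradiction, so that the existence of a second nonsolvable chief factor can be shown to force $\Delta(G)$ to be too dense. First, since $N$ is the solvable radical of $G$, the factor $G/N$ has trivial solvable radical, so every nontrivial normal subgroup of $G/N$ is nonsolvable; in particular $M/N$ is nonsolvable, and Lemma~\ref{lem:1} forces $M/N$ to be a nonabelian simple group, which I denote $S$. The conclusion that $G/N$ is almost simple with socle $S$ is equivalent to $C := C_{G/N}(S) = 1$, since any other minimal normal subgroup of $G/N$ intersects $S$ trivially (as $S$ is simple) and hence centralizes $S$, so would lie in $C$. I would therefore assume $C \neq 1$ and seek a contradiction.

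Under this assumption, $C$ contains a minimal normal subgroup $L/N$ of $G/N$ which, by the same trivial-radical argument combined with Lemma~\ref{lem:1}, is a nonabelian simple group $T$. Since $(M/N) \cap (L/N) \leq Z(M/N) = 1$, the product $ML/N \cong S \times T$ is normal in $G/N$. The next step is to show that $\Delta(S \times T)$ embeds into $\Delta(G)$ as a subgraph: given $p \in \pi(S)$ and $q \in \pi(T)$, I pick $\chi \in \Irr(S), \psi \in \Irr(T)$ with $p \mid \chi(1)$ and $q \mid \psi(1)$; then $\chi \otimes \psi$ is irreducible on $ML/N$, and by Clifford's theorem every irreducible constituent of the induced character $(\chi \otimes \psi)^{G/N}$ has degree divisible by $\chi(1)\psi(1)$, hence by $pq$. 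Inflating to $G$ gives $p \sim q$ in $\Delta(G)$. Consequently, every prime of $\pi(S)$ is adjacent in $\Delta(G)$ to every prime of $\pi(T)$, and the primes in $\pi(S) \cap \pi(T)$ are complete vertices on $\pi(S) \cup \pi(T)$.

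The final step is a case analysis on $m := |\pi(S) \cap \pi(T)|$, using that $|\pi(S)|, |\pi(T)| \geq 3$ by Burnside's $p^aq^b$ theorem. When $m \geq 3$ and both $\pi(S) \setminus \pi(T)$ and $\pi(T) \setminus \pi(S)$ are nonempty, three intersection primes together with one prime from each side form a $K_5$ in $\Delta(G)$, contradicting the $K_5$-freeness shown in the proof of Lemma~\ref{lem:1}. In the remaining configurations---small $m$, or one of $\pi(S) \setminus \pi(T), \pi(T) \setminus \pi(S)$ empty---the combined adjacencies must be shown to force some vertex of $\pi(S) \cup \pi(T)$ to degree at least $5$, violating 4-regularity.

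The hard part will be the tightest configurations, such as $m = 0$ with $|\pi(S)| = |\pi(T)| = 3$, where the cross-adjacencies alone give only a $K_{3,3}$ subgraph and every vertex in $\pi(S) \cup \pi(T)$ sits at degree $3$. There one must invoke the detailed structure of $\Delta(S)$ for simple $S$ (Lemma~\ref{lem0}) to account for possible extra edges within $\pi(S)$ and $\pi(T)$, together with the bound $|\rho(G)| \leq 9$ to control the at-most-three extra primes in $\rho(G) \setminus \pi(S \times T)$ and their adjacencies to $\pi(S) \cup \pi(T)$, finally squeezing out a contradiction with 4-regularity.
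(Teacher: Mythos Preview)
Your overall architecture matches the paper's: assume the centralizer $C/N=C_{G/N}(M/N)$ is nontrivial, pick a second simple minimal normal subgroup $L/N\cong T$, exploit $S\times T$ inside $G/N$, and contradict 4-regularity or $K_5$-freeness. But the sketch misidentifies where the difficulty lies and omits the key tool the paper uses to finish.

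First, your ``hard case'' $m=|\pi(S)\cap\pi(T)|=0$ is vacuous: every nonabelian simple group has even order, so $2\in\pi(S)\cap\pi(T)$ and $m\ge 1$. In fact, by Huppert's classification the simple groups with exactly three prime divisors all satisfy $\{2,3\}\subset\pi$, so when $|\pi(S)|=|\pi(T)|=3$ one has $m\ge 2$ and your cross-adjacencies already force a $K_4$ on $\pi(S)\cup\pi(T)$. The genuinely delicate case is not $m=0$ but rather $|\pi(S)\cup\pi(T)|\in\{3,4\}$, where the $S\times T$ information alone gives only a triangle or a $K_4$ and the remaining $3$ to $6$ primes of $\rho(G)$ lie outside $\pi(G/N)$, hence in $\rho(N)$.

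Second, you have no mechanism for controlling those leftover primes. The paper's engine here is Lemma~\ref{TVtri}: for $\theta\in\Irr(N)$ one gets either an extension (forcing adjacency to all of $\pi(S)$ via Gallagher) or a character over $\theta$ whose degree picks up two distinct primes of $\pi(S)$. Applying this to characters $\theta$ with $\theta(1)$ divisible by primes of $\rho(G)\setminus\rho(G/N)$ (and, when $N$ is solvable, using P\'alfy's condition to find adjacent pairs among them) is what actually pushes degrees past~$4$ or produces forbidden $K_4$'s. This step, together with checking against the explicit list of 4-regular graphs on $6$--$9$ vertices (Figures~\ref{fig:1}--\ref{fig:4} and~\ref{fig:6vert}), is the substance of the paper's proof and is absent from your outline.
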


\begin{proof}
Let $C/N=C_{G/N}(M/N)$. Then $G/C$ is almost simple with socle $MC/C\cong M/N$. By Lemma~\ref{lem:1}, $M/N\cong S$ is simple nonabelian and thus $C\cap M=N$. 
It suffices to show that $C=N$. By a way of contradiction suppose that $C\neq N$. Let $L\le C$ be such that $L/N$ is a minimal normal subgroup of $G/N$. Then we have that $L/N$ is a nonabelian simple group. Observe that $\pi(L/N)\subseteq \pi(C/N)$ and $CM/N\cong C/N\times M/N$, Which implies that the primes in $\rho(C/N)\cap \rho(M/N)$ are complete vertices in the subgraph induced by $\rho(C/N)\cup \rho(M/N)$. Observe that since $2\in \rho(C/N)\cap \rho(M/N)$, we must have that $|\rho(C/N)\cup \rho(M/N)|\leq 5$.  Otherwise $\deg(2)\geq 5$. We deduce that $3\leq|\rho(C/N)\cup \rho(M/N)|\leq 5$. Write $\mathfrak{F}:=\rho(C/N)\cup \rho(M/N)$.\vspace*{0.23cm}

\noindent\textbf{\bf Case 1: $|\mathfrak{F}|=5$}\vspace*{0.23cm}

Let $\mathfrak E=\rho(C/N)\cap \rho(M/N)$. By the structure of the graphs in Figures~\ref{fig:1}-\ref{fig:4}, any five vertices chosen in any of the graphs contains at most 2 complete vertices. Then this forces $|\mathfrak E|\le 2$. We observe that if $|\rho(C/N)|>|\rho(L/N)|$, then $|\rho(C/N)|\ge 4$. In this case we must have that $|\pi(M/N)|=3$, since otherwise we would have that $|\mathfrak E|\ge 3$ contradicting the structure of the graphs in consideration. 

If we suppose that $|\rho(C/N)|\ge 4$ and $|\pi(M/N)|=3$, then we must have that $|\mathfrak E|=2$. By invoking Lemma~\ref{lem:join}, we observe that $\mathfrak F$ spans a subgraph with 3 complete vertices. This again contradicts the stucture of the graphs in consideration. Thus the only cases we need to consider are when both $\rho(C/N)$ and $\pi(M/N)$ have 3 vertices and so it suffices to assume that $|\rho(C/N)|=|\pi(L/N)|$.

Assume first that $\DG$ is $K_4$-free. By \cite[Theorem B]{HungK4}, $|\rho(G)|\leq 7$. There are two 4-regular graphs of order 7 shown in Figure~\ref{fig:1} and \ref{fig:2} and one with 6 vertices.  Observe that for any 5 vertices chosen in Figure~\ref{fig:1}, \ref{fig:2} and \ref{fig:6vert}, we have that $|\rho(C/N)\cap \rho(M/N)|\leq 1$, $|\rho(C/N)\cap \rho(M/N)|\leq 2$ and  $|\rho(C/N)\cap \rho(M/N)|\leq 1$ respectively. 

We need to find simple groups $T$ and $S$ with $|\pi(S)|=|\pi(T)|=3$, $|\pi(S)\cap\pi(T)|=1$ and $|\pi(S)\cup \pi(T)|=5$. By \cite{Hupp}, the only simple groups whose orders are divisible by 3 primes only  are $$\PSU_3(3)\cong \PSp_4(2), \PSL_2(7), \PSL_2(8), \PSL_2(17), \PSL_3(3), \Alt_5, \Alt_6.$$
Each of these groups have order divisible by both 2 and 3. This contradicts the fact that $|\rho(S)\cap\rho(T)|=1$. If $|\mathfrak E|=2$ the we obtain that $|\mathfrak F|=4$, a contradiction.


%
%
%
%
%
%
%
%
%
%
%

\begin{figure}[h!]\centering
\begin{tikzpicture}
	\vertex (w) at (90-360/7:1.4) [label=right:$p_3$] {};
	\vertex (u) at (90:1.4) [label=above:$p_2$] {};
	\vertex (v) at (90+360/7:1.4) [label=left:$p_1$] {};
	\vertex (x) at (90+720/7:1.4) [label=left:$q_1$] {};
	\vertex (y) at (90+1080/7:1.4) [label=left:$q_2$] {};
	\vertex (a) at (90+1440/7:1.4) [label=right:$q_3$] {};
	\vertex (b) at (90+1800/7:1.4) [label=right:$q_4$] {};
	\path 
 		(w) edge (u)
 		(u) edge (v)
 		(v) edge (x)
 		(y) edge (a)
 		(b) edge (a)
 		(x) edge (y)
 		(b) edge (w)
 		(v) edge (w)
 		(u) edge (x)
 		(v) edge (y)
 		(x) edge (a)
 		(y) edge (b)
 		(a) edge (w)
 		(b) edge (u)
	;
\end{tikzpicture} 
\caption{4-regular graph of order 7 with 7 triangles}
\label{fig:1}
\end{figure}\FloatBarrier

\begin{figure}[htb!]\centering
\begin{tikzpicture}
	\vertex (w) at (90-360/7:1.4) [label=right:$p_3$] {};
	\vertex (u) at (90:1.4) [label=above:$p_2$] {};
	\vertex (v) at (90+360/7:1.4) [label=left:$p_1$] {};
	\vertex (x) at (90+720/7:1.4) [label=left:$q_1$] {};
	\vertex (y) at (90+1080/7:1.4) [label=left:$q_2$] {};
	\vertex (a) at (90+1440/7:1.4) [label=right:$q_3$] {};
	\vertex (b) at (90+1800/7:1.4) [label=right:$q_4$] {};
	\path 
 		(w) edge (u)
 		(u) edge (v)
 		(v) edge (x)
 		(y) edge (a)
 		(b) edge (a)
 		(x) edge (y)
 		(b) edge (w)
 		(v) edge (w)
 		(v) edge (b)
 		(u) edge (y)
 		(u) edge (a)
 		(x) edge (w)
 		(x) edge (a)
 		(b) edge (y)
	;
\end{tikzpicture}
\caption{4-regular graph of order 7 with 6 triangles}
\label{fig:2}
\end{figure}
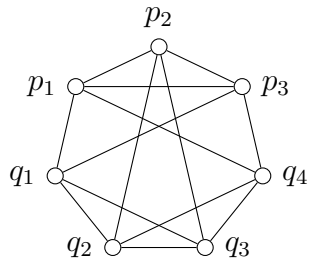\FloatBarrier


We may now assume that $\DG$ contains a $K_4$. So $|\rho(G)|\geq 8$. First suppose $|\rho(G)|=8$. We have only one 4-regular graph of order 8 with a $K_4$ as a subgraph. Consider Figure~\ref{fig:3} below, any choice of 5 vertices will induce a subgraph with only one complete vertex. This implies that $\pi(S)\cap\pi(T)=\{1\}$. Again if $|\pi(S)|=|\pi(T)|=3$, we obtain a contradiction as above. 

\begin{figure}[h!]\centering
\resizebox{5cm}{!}{\begin{tikzpicture}
\vertex (a) at (0,0) [label=below:$p_4$] {};
\vertex (e) at (4,0) [label=below:$q_4$] {};
\vertex (c) at (2,1) [label=above:$p_2$] {};
\vertex (b) at (2,-1) [label=below:$p_3$] {};
\vertex (d) at (0,2) [label=above:$p_1$] {};
\vertex (g) at (6,1) [label=above:$q_2$] {};
\vertex (f) at (6,-1) [label=below:$q_3$] {};
\vertex (h) at (4,2) [label=above:$q_1$] {};
\path
		(a) edge (e)
		(b) edge (f)
		(d) edge (h)
		(c) edge (g)
		(a) edge (b)
		(a) edge (c)
		(a) edge (d)
		(b) edge (c)
		(b) edge (d)
		(c) edge (d)
		(e) edge (f)
		(e) edge (g)
		(e) edge (h)
		(f) edge (g)
		(f) edge (h)
		(g) edge (h);
\end{tikzpicture} }
\caption{4-regular graph with a $K_4$}\label{fig:3}
\end{figure}
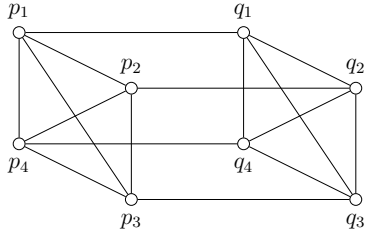\FloatBarrier

Lastly we consider the case when $\DG$ has 9 vertices.  We have only two 4-regular graphs which contains at least one $K_4$. The graphs are shown in Figure~\ref{fig:4}.

However, any subgraph of order 5 chosen from the graph in Figure~\ref{fig:4}(b) has only one complete vertex. This obtains a contradiction by previous arguments.
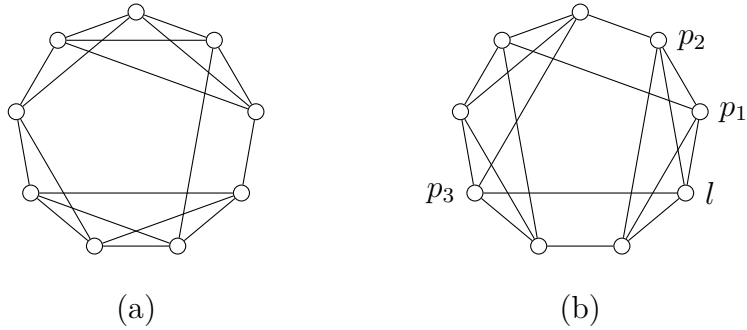
\begin{figure}[htb!]\centering
\begin{tikzpicture}
	\vertex (w) at (10:1.6) {};
	\vertex (u) at (50:1.6) {};
	\vertex (v) at (90:1.6) {};
	\vertex (x) at (130:1.6) {};
	\vertex (y) at (170:1.6) {};
	\vertex (a) at (210:1.6) {};
	\vertex (b) at (250:1.6) {};
	\vertex (c) at (290:1.6) {};
	\vertex (d) at (330:1.6) {};
	\path 
 		(d) edge (a)
 		(d) edge (c)
 		(d) edge (w)
 		(d) edge (b)
 		(w) edge (u)
 		(w) edge (v)
 		(w) edge (x)
 		(u) edge (x)
 		(u) edge (c)
 		(v) edge (y)
 		(b) edge (y)
 		(v) edge (x)
 		(x) edge (y)
 		(a) edge (y)
 		(b) edge (a)
 		(u) edge (v)
 		(b) edge (c)
 		(a) edge (c)
	;
\draw (0,-2) node [below] {(a)} circle (0);
\end{tikzpicture}\hspace*{2cm}\begin{tikzpicture}
	\vertex (w) at (10:1.6) [label=right:$p_1$] {};
	\vertex (u) at (50:1.6) [label=right:$p_2$] {};
	\vertex (v) at (90:1.6) {};
	\vertex (x) at (130:1.6) {};
	\vertex (y) at (170:1.6) {};
	\vertex (a) at (210:1.6) [label=left:$p_3$] {};
	\vertex (b) at (250:1.6) {};
	\vertex (c) at (290:1.6) {};
	\vertex (d) at (330:1.6) [label=right:$l$] {};
	\path 
 		(d) edge (u)
 		(d) edge (c)
 		(d) edge (w)
 		(d) edge (a)
 		(w) edge (u)
 		(a) edge (v)
 		(b) edge (x)
 		(w) edge (x)
 		(u) edge (c)
 		(v) edge (y)
 		(b) edge (y)
 		(v) edge (x)
 		(x) edge (y)
 		(a) edge (y)
 		(b) edge (a)
 		(u) edge (v)
 		(b) edge (c)
 		(w) edge (c)
	;
\draw (0,-2) node [below] {(b)} circle (0);
\end{tikzpicture}
\caption{4-regular graphs of order 9 with $K_4$}
\label{fig:4}
\end{figure}\FloatBarrier

Now,  consider the graph in Figure~\ref{fig:4}(a). We can choose a subgraph with 5 vertices and 2 complete vertices. We obtain a subgraph isomorphic to the graph in Figure~\ref{fig:5}.

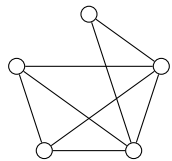
\begin{figure}[htb!]\centering
\begin{tikzpicture}
	\vertex (w) at (18:1) {};
	\vertex (u) at (90:1) {};
	\vertex (v) at (162:1) {};
	\vertex (x) at (234:1) {};
	\vertex (y) at (306:1) {};
	\path 
 		(x) edge (y)
 		(x) edge (v)
 		(v) edge (w)
 		(y) edge (w)
 		(y) edge (v)
 		(y) edge (u)
 		(u) edge (w)
 		(x) edge (w)
	;
\end{tikzpicture}
\caption{Subgraph with 2 complete vertices}
\label{fig:5}
\end{figure}\FloatBarrier

We seek to obtain simple groups $S$ and $T$ such that $\Delta(S\times T)$ is isomorphic to graph in Figure~\ref{fig:5}. Since $|\pi(S)\cap\pi(T)|=2$, one of the simple groups has order divisible by 4 primes. We have shown that this cannot occur.


\noindent\textbf{\bf Case 2: $|\mathfrak{F}|=4$}\vspace*{0.23cm}

 It follows that $2\le |\pi(M/N)\cap\pi(C/N)|\le 4$. Since $3\le |\pi(M/N)|, \pi(C/N)|\le 4$, it will suffice to assume that $|\rho(C/N)|=4$ and $|\pi(M/N)|=4$ and $ |\pi(M/N)\cap\pi(C/N)|= 4$. We may also assume that $|\pi(G/C)|\ge|\pi(M/N)|$ and $|\pi(C/N)|\ge |\pi(L/N)|$ so that we have that $|\rho(G/N)|\le 5$  by \cite{Hupp,GAP}.  By Lemma~\ref{lem:join}, the subgraph induced by $\mathfrak{F}$ is a complete cubic graph no matter the choice of the number of primes in each of the two cases. Now, the 4-regular graphs with  six and seven vertices are $K_4$-free and thus there is nothing to prove. We may assume that $|\rho(G)|\geq 8$.

If $\DG$ is the 4-regular graph in Figure~\ref{fig:3}, without loss of generality, let $\{p_i\}_{i=1}^4\cup \{q_k\}=\rho(G/N)$ for some fixed $k\in \{1,2,3,4\}$, say $k=1$. Let $q_j\notin \rho(G/N), j\in \{2,3,4\}$, then we must have  $q_j|\theta(1)$ for some $\theta\in \Irr(N)$. By Lemma~\ref{TVtri}, $\theta$ extends to $L$ (or $M$) or $\psi(1)/\theta(1)$ is divisible by two primes in $\pi(M/N)$  for some $\psi\in \Irr(M|\theta)$. This implies that each $q_j$ is adjacent to at least 2 distinct primes in $\mathfrak{F}$. This is not the case for the graph in Figure~\ref{fig:3}. A similar argument obtains a contradiction for the graphs in Figure~\ref{fig:4}(a) and (b).\vspace*{0.23cm}

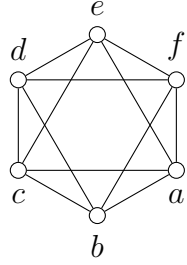
\begin{figure}[htb!]\centering
\begin{tikzpicture}
	\vertex (f) at (30:1.2)[label=above:$f$]{};
	\vertex (e) at (90:1.2)[label=above:$e$]{};
	\vertex (d) at (150:1.2)[label=above:$d$]{};
	\vertex (c) at (210:1.2)[label=below:$c$]{};
	\vertex (b) at (270:1.2)[label=below:$b$]{};
	\vertex (a) at (330:1.2)[label=below:$a$]{};
	\path 
		(b) edge (c)
		(b) edge (a)
		(b) edge (d)
		(b) edge (f)
		(c) edge (d)
		(c) edge (e)
		(c) edge (a)
		(d) edge (e)
		(d) edge (f)
		(e) edge (f)
		(e) edge (a)
		(f) edge (a)
	;
\end{tikzpicture}
\caption{The 4-regular graph with 6 vertices}
\label{fig:6vert}
\end{figure}\FloatBarrier

\noindent\textbf{\bf Case 3: $|\mathfrak{F}|=3$}\vspace*{0.23cm}

By \cite{Hupp}, $M/N$ is a simple group with $\pi(|\mathrm{Out}(M/N)|)\subset \pi(M/N)$ and it follows by \cite{ATLAS} that $\pi(G/C)=\pi(MC/C)=\pi(M/N)$. Also $\pi(C/N)=\pi(L/N)$ and thus $\rho(G/N)=\mathfrak{F}$. The subgraph induced by $\mathfrak{F}$ is a triangle. Consider the graphs in Figures~\ref{fig:1} - \ref{fig:4}. Choose any two prime $q\in \rho(G)\setminus \mathfrak{F}$. We obtain  by previous arguments that $q$ is adjacent to at least two primes in $\mathfrak{F}$. This condition is not satisfied by all vertices $q\in \rho(G)\setminus \mathfrak{F}$ in any of the graphs chosen apart from one with 6 vertices. Now suppose that $\DG$ is isomorphic to Figure~\ref{fig:6vert}. If we choose any triangle in the graph, we obtain that the remaining primes are all connected. Let $r, l\in \rho(G)-\mathfrak F$. Then $r,l\in \rho(N)$. Let $\gamma\in \Irr(N)$ be such that $rl|\gamma(1)$. By Lemma~\ref{TVtri}, we have that $\varphi(1)/\gamma(1)$ is divisible by two distinct primes in $\pi(M/N)$ or $\gamma$ extends to $M$. Suppose that the former occurs, then we obtain a subgraph isomorphic to $K_4$, a contradiction so we may assume that the latter occurs. In this case we obtain that $\DG$ contains a subgraph with 5 vertices and two complete vertices. This does not occur as a subgraph of Figure~\ref{fig:6vert}. We obtain the final contradiction and thus $C=N$. 
\end{proof}

\begin{hyp}\label{hyp}
Let $G$ be nonsolvable group whose prime graph $\DG$ is 4-regular with $7\le |\rho(G)|\le 9$. Let $N\unlhd G$ be the solvable radical of $G$ such that $G/N$ is almost simple with socle $M/N\cong S$ a nonabelian simple group.
\end{hyp}

\begin{lem}\label{lem:219}
Let $N\unlhd G$ be groups so that $G/N\cong J_1$. Let $p\in\rho(G)$ be such that $p\notin \pi(G/N)$. Then $p$ is connected to all primes in $\pi(J_1)$ or is connected to either 2 or 19.
\end{lem}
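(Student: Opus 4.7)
The plan is to pick $\chi \in \Irr(G)$ with $p \mid \chi(1)$, let $\theta \in \Irr(N)$ be an irreducible constituent of $\chi|_N$, and exploit Clifford-theoretic control of $\chi(1)/\theta(1)$ together with two ATLAS facts about $J_1$: its Schur multiplier is trivial, and every proper subgroup of $J_1$ has index divisible by $2$ or $19$. The argument then splits on whether $\theta$ is $G$-invariant.

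First I would reduce to $p \mid \theta(1)$. Writing $\chi = \phi^G$ for the Clifford correspondent $\phi \in \Irr(I_\theta|\theta)$, one has $\chi(1) = [G:I_\theta]\phi(1)$, and $\phi(1)/\theta(1)$ is the degree of an irreducible projective representation of $I_\theta/N$, hence divides $|I_\theta/N|$. Therefore $\chi(1)/\theta(1)$ divides $|G/N| = |J_1|$. Since $p \notin \pi(J_1)$ but $p \mid \chi(1)$, we must have $p \mid \theta(1)$.

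In Case 1 ($\theta$ is $G$-invariant) the triviality of the Schur multiplier of $J_1$ and Theorem~\ref{TSchur} give an extension $\theta_0 \in \Irr(G)$. Gallagher then yields $\theta_0\lambda \in \Irr(G)$ of degree $\theta(1)\lambda(1)$ for every $\lambda \in \Irr(G/N)$. Because $\rho(J_1) = \pi(J_1) = \{2,3,5,7,11,19\}$, for each $q \in \pi(J_1)$ some $\lambda$ has $q \mid \lambda(1)$, so $pq \mid (\theta_0\lambda)(1)$ and $p \sim q$ for all $q \in \pi(J_1)$. In Case 2 ($T := I_G(\theta) < G$), $T/N$ is a proper subgroup of $J_1$; the maximal subgroups of $J_1$ from the ATLAS are $\PSL_2(11)$, $2^3{:}7{:}3$, $2 \times \Alt_5$, $19{:}6$, $11{:}10$, $D_6 \times D_{10}$, $7{:}6$, with indices $266, 1045, 1463, 1540, 1596, 2926, 4180$ respectively. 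Each of these is divisible by $2$ or $19$, so by multiplicativity of index every proper subgroup of $J_1$ has index divisible by $2$ or $19$. Hence $[G:T]$, which divides $\chi(1)$, is divisible by $2$ or $19$, giving $p \sim 2$ or $p \sim 19$.

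The only real obstacle is the numerical check on the maximal subgroup indices of $J_1$; everything else is routine Clifford theory together with the Schur-multiplier triviality and Gallagher's theorem. Note that Lemma~\ref{TVtri} alone is not sharp enough, since it only guarantees adjacency with some two primes of $\pi(J_1)$, whereas the present refinement pins down $2$ or $19$ via the subgroup-index computation and uses the extension case to get full adjacency.
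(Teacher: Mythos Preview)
Your proposal is correct and follows essentially the same approach as the paper: pick $\chi\in\Irr(G)$ with $p\mid\chi(1)$, take an irreducible constituent $\theta$ of $\chi_N$, and split on whether $\theta$ is $G$-invariant, using triviality of the Schur multiplier of $J_1$ plus Gallagher in the invariant case and the ATLAS list of maximal-subgroup indices (each divisible by $2$ or $19$) in the non-invariant case. The only difference is that you make the reduction $p\mid\theta(1)$ explicit up front, whereas the paper leaves it implicit; this is purely expository.
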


\begin{proof} Let $\chi\in \Irr(G)$ be such that $p|\chi(1)$.  Let $\theta$ be an irreducible constituent for $\chi_N$. Let $T=I_G(\theta)$, and let $\psi \in \Irr(T)$ be the Clifford correspondent lying between $\theta$ and $\chi$. If $T = G$, then $\theta$ extends to  $G$ since the Schur multiplier of $J_1$ is trivial by \cite[Theorem 11.7]{Isaacs}. By  Gallagher's Theorem we have that $\theta(1)\vartheta(1)\in \cd(G)$  for every $\vartheta\in \Irr(G/N)$. It follows that $p$ is connected to all the primes in $\pi(J_1)$. We may assume that $T < G$. Then it follows that $T\leq H$ for some maximal subgroup $H$ of $G$. It is easy to see that $H/N$ is a maximal subgroup of $G/N\cong J_1$. According to the Atlas \cite{ATLAS},
the possibilities for $|G : H |$ are 266, 1045, 1463, 1540, 1596, 2926, and 4180. We have that  $|G:T|\psi(1)\in \cd(G)$.  Observer that $|G : H |$ divides $|G : T |$. Each of the possibility of $|G:H|$ contains a 2 or a 19 as a prime divisor.
\end{proof}

\begin{lem}\label{simple3primes}
Assume Hypothesis~\ref{hyp}. If $|\pi(S)|=3$, then $G$ does not exist. \end{lem}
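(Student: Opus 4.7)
The plan is to combine Huppert's classification of simple groups with three prime divisors with Clifford-theoretic adjacency constraints forced by 4-regularity. By Huppert's list $S$ must be one of $\Alt_5, \Alt_6, \PSL_2(7), \PSL_2(8), \PSL_2(17), \PSL_3(3), \PSU_3(3)$, and in each case $\pi(\Out(S)) \subseteq \pi(S)$. Thus $\pi(G/N) = \pi(S)$, $r := |\rho(G) \setminus \pi(S)| \ge 4$, and every outer prime lies in $\rho(N)$.

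For each $p \in \rho(N) \setminus \pi(S)$, pick $\theta \in \Irr(N)$ with $p \mid \theta(1)$ and apply Lemma~\ref{TVtri} to $N \unlhd M$, where $M/N \cong S$. Either (a) some $\psi \in \Irr(M\mid\theta)$ has $\psi(1)/\theta(1)$ divisible by two distinct primes $q_1, q_2 \in \pi(S)$, or (b) $S \in \{\Alt_5, \PSL_2(8)\}$ and $\theta$ extends to $\theta_0 \in \Irr(M)$. Lifting $\psi$ to $G$ shows $p \sim q_1$, $p \sim q_2$ and $q_1 \sim q_2$ in $\DG$; in case (b), Gallagher's theorem applied to $\theta_0$ and $\eta \in \Irr(S)$ shows $p$ is adjacent to every prime of $\pi(S)$. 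Writing $r_a, r_b$ for the counts of outer primes falling in cases (a), (b), and $E_1, E_2$ for the number of edges of $\DG$ from $\pi(S)$ outward and within $\pi(S)$, one has $12 = E_1 + 2E_2$ and $E_1 \ge 2r_a + 3r_b$, hence $E_2 \le 6 - r - r_b/2$.

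For $S \in \{\PSL_3(3), \PSU_3(3)\}$, $\Delta(S) = K_3$ forces $E_2 \ge 3$, contradicting $E_2 \le 2$. For $S \in \{\PSL_2(7), \PSL_2(17), \Alt_6\}$, case (b) is unavailable ($r_b = 0$) and $\Delta(G/N)[\pi(S)]$ contains the edge visible in the character table of $G/N$, so $E_2 \ge 1$; a short analysis of how the $r$ pairs used by the outer primes distribute among the three possible edges of $\pi(S)$ then forces some vertex of $\pi(S)$ to have degree $>4$ or degree $0$, contradicting 4-regularity.

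The remaining case $S \in \{\Alt_5, \PSL_2(8)\}$ is the most delicate: the integer constraints leave essentially the single surviving configuration $r = 4, r_b = 4$, in which $\DG$ is $K_{3,4}$ together with a perfect matching on the 4-side (a graph isomorphic to Figure~\ref{fig:2} after relabelling $\pi(S)$ as an independent triple). Here every outer prime must fall in case (b) of Lemma~\ref{TVtri}, so each of the four associated $\theta \in \Irr(N)$ must extend to $M$. Combining this extension requirement with the Schur-multiplier data for $\Alt_5$ and $\PSL_2(8)$ and the constraint that $N$ must simultaneously provide four distinct primes, each carrying an extending $M$-invariant character, yields the final contradiction. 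This last step is the main obstacle, because $\Delta(S)$ is edgeless for both $\Alt_5$ and $\PSL_2(8)$ and no purely arithmetic obstruction on $\pi(S)$ alone is available.
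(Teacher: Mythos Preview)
Your edge-counting approach is a nice alternative to the paper's argument (which works with characters divisible by \emph{pairs} of outer primes, immediately producing $K_4$'s), and the reduction to the single configuration $r=4$, $r_b=4$, $S\in\{\Alt_5,\PSL_2(8)\}$, $\DG\cong K_{3,4}$ plus a matching, is correct. The cases $|\rho(G)|=8,9$ are also swallowed by your inequalities, as you implicitly note.

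The genuine gap is the last paragraph. Your claimed contradiction from ``Schur-multiplier data'' does not exist: for $\PSL_2(8)$ the multiplier is trivial, and in any event Lemma~\ref{TVtri} already \emph{asserts} extension in case~(b) rather than demanding a hypothesis that the multiplier could obstruct. Nothing about $N$ providing four primes with extending $M$-invariant characters is contradictory in itself; the $K_{3,4}$-plus-matching graph is a perfectly consistent picture at that level of analysis, so you cannot finish by staring at $S$ alone.

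What the paper does here, and what your argument is missing, is to look at $\Delta(N)$ directly. The four outer primes lie in $\rho(N)$ and the only edges among them in $\DG$ are the two matching edges, so if $\rho(N)$ were exactly these four primes, $\Delta(N)$ would have two components of size~$2$, violating P\'alfy's bound for solvable groups with disconnected prime graph. Hence $\rho(N)\cap\pi(S)\neq\emptyset$; pick $q$ in this intersection, find an outer prime $p$ with $q\sim p$ in $\Delta(N)$ (forced by the component structure), and apply Lemma~\ref{TVtri} to $\varphi\in\Irr(N)$ with $qp\mid\varphi(1)$. Case~(a) gives a $K_4$, while case~(b) plus Gallagher makes $q$ adjacent to the other two primes of $\pi(S)$; either way you contradict $E_2=0$. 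That is the step you need to supply in place of the Schur-multiplier claim.
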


\begin{proof}
It suffices to show that $|\rho(G)|\le 6$. By \cite{Hupp, ATLAS} we easily deduce that $\pi(G/N)=\pi(S)$. Let $\mathfrak{B}=\rho(G)\setminus \pi(S)$. Suppose on the contrary that $|\rho(G)|\geq 7$. It follows that $|\mathfrak B|\geq 4$. By P\'alfy's condition, we must have that $\mathfrak{B}$ spans atleast two edges, say $r\sim l, x\sim y$, where $\{r,l,x,y\}\subseteq \mathfrak B$. We can show that the two edges assumed are disjoint. It is easy to see that if $r\sim l, l\sim x$ are the edges, then $\deg(l)\geq 5$ or $\DG$ contains a $K_5$ by Lemma~\ref{TVtri}.  It follows that there is a $\lambda_1, \lambda_2\in \Irr(N)$ such that $rl|\lambda_1(1)$ and $xy|\lambda_2(1)$.  By Lemma~\ref{TVtri}, each pair $(x,y)$ and $(r,l)$ are contained in some nondisjoint $K_4$'s,  a vertex of degree more than five, or $\DG$ contains subgraph isomorphic to Figure~\ref{fig:2} in which case $S\cong \PSL_2(8)$ or $\Alt_5$. The first two conclusions cannot occur so we may assume that the latter occurs. In fact since $\pi(S)$ should not have an edge, we must have that $G/N\cong S$. Also since no other edge should arise, we must have that $\Delta(N)$ is disconnected with two components each with two vertices or two vertices and a triangle. The former case  cannot occur due to a Theorem by Palfy which states that $N\geq 2^n-1$, where $N$ and $n$ are the number of vertices of the two components.  So the latter occurs. Without loss of generality, assume that $\pi(S)=\{q_1,p_2,q_4\}$ and $\rho(N)=\{p_1,p_3,q_2,q_3,q_1\}$. Let $\varphi\in \Irr(N)$ be such that $q_1q_3|\varphi(1)$. Then by Lemma~\ref{TVtri}, we must have that $\vartheta(1)/\varphi(1)$ is divisible by a prime in $\pi(S)$ for some $\vartheta\in \Irr(G|\varphi)$ or $\varphi$ extends to a $\varphi_0\in \Irr(G)$. Either way we observe that $q_1$ is adjacent to a prime in $\pi(S)$ different from itself, a contradiction.
\end{proof}

\begin{lem}\label{K4}
Assume Hypothesis~\ref{hyp} and let $|\pi(S)|=4$. If the subgraph of $\DG$ on $\pi(M/N)$ is a complete cubic, then $G$ does not exist.
\end{lem}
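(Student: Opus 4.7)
The plan is to combine a Clifford-theoretic lower bound on adjacencies (via Lemma~\ref{TVtri}) with the rigidity of the $K_4$ sitting inside a $4$-regular $K_5$-free graph in order to squeeze $|\rho(G)|\le 7$, contradicting the fact that the only $4$-regular graphs on seven vertices (Figures~\ref{fig:1} and \ref{fig:2}) are $K_4$-free.

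First I would set up the graph-theoretic bookkeeping. By Lemma~\ref{lem:1} the graph $\DG$ is $K_5$-free, so no prime in $\rho(G)\setminus\pi(S)$ is adjacent to all four vertices of the $K_4$ on $\pi(S)$. Since $\DG$ is $4$-regular and $\pi(S)$ induces a $K_4$, each vertex of $\pi(S)$ has exactly three neighbours within $\pi(S)$ and so exactly one neighbour in $\rho(G)\setminus\pi(S)$. In particular, the number of edges between $\pi(S)$ and its complement in $\rho(G)$ is exactly $4$, and the hypothesis forces $|\rho(G)|\in\{8,9\}$.

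Next I would prove the key adjacency statement: every prime $p\in\rho(N)\setminus\pi(S)$ has at least two neighbours in $\pi(S)$. Pick $\theta\in\Irr(N)$ with $p\mid\theta(1)$ and apply Lemma~\ref{TVtri} to $N\tri M$ (so that $M/N\cong S$). Since $|\pi(S)|=4$, neither $S\cong\Alt_5$ nor $S\cong\PSL_2(8)$ occurs, so the extension conclusion of that lemma is impossible; hence there exists $\psi\in\Irr(M|\theta)$ with $\psi(1)/\theta(1)$ divisible by two distinct primes $r,s\in\pi(S)$. Lifting $\psi$ to an irreducible constituent $\chi$ of $\psi^G$ gives $\psi(1)\mid\chi(1)$, hence $prs\mid\chi(1)$, and so $p$ is adjacent to both $r$ and $s$ in $\DG$, as required.

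Finally, each prime in $\rho(N)\setminus\pi(S)$ consumes at least two of the four edges leaving $\pi(S)$, so $|\rho(N)\setminus\pi(S)|\le 2$. Because $G/N$ is almost simple with socle $S$, we also have $\pi(G/N)\setminus\pi(S)\subseteq\pi(\Out(S))\setminus\pi(S)$; the classification of simple groups with $|\pi(S)|=4$ (namely Lemma~\ref{four} together with \cite[Tables 2--3]{Hupp}) yields $|\pi(\Out(S))\setminus\pi(S)|\le 1$ in each case. Combining with $\rho(G)\setminus\pi(S)=(\rho(N)\setminus\pi(S))\cup(\pi(G/N)\setminus\pi(S))$ gives $|\rho(G)|\le 4+2+1=7$, contradicting the conclusion of the first step. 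The main obstacle I anticipate is the uniform verification of $|\pi(\Out(S))\setminus\pi(S)|\le 1$: the $\PSL_2$-subfamilies are handled by the remark following Lemma~\ref{four}, but the alternating and sporadic entries of Huppert's tables must be inspected individually via \cite{ATLAS}.
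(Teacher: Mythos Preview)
Your argument is correct and takes a genuinely different route from the paper's. After the common reduction to $|\rho(G)|\ge 8$ (the two $4$-regular graphs on seven vertices being $K_4$-free), the paper uses the bound $|\pi(G/N)|\le 5$ to get $|\mathcal C|=|\rho(G)\setminus\pi(G/N)|\ge 3$, applies P\'alfy's condition in the solvable radical $N$ to find an edge $r\sim l$ inside $\mathcal C$, and then feeds a single $\theta\in\Irr(N)$ with $rl\mid\theta(1)$ into Lemma~\ref{TVtri}; this yields a $K_4$ on $\{r,l,p,q\}$ with $p,q\in\pi(S)$, and since $p,q$ already have three neighbours inside the $K_4$ on $\pi(S)$ one gets $\deg(p),\deg(q)\ge 5$. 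Your approach instead applies Lemma~\ref{TVtri} to \emph{each} prime of $\rho(N)\setminus\pi(S)$ individually and then counts edges: since every vertex of $\pi(S)$ has exactly one neighbour outside $\pi(S)$, only four edges leave the $K_4$, and each prime of $\rho(N)\setminus\pi(S)$ occupies at least two of them, whence $|\rho(N)\setminus\pi(S)|\le 2$; combined with $|\pi(G/N)\setminus\pi(S)|\le 1$ and the decomposition $\rho(G)\setminus\pi(S)=(\rho(N)\setminus\pi(S))\cup(\pi(G/N)\setminus\pi(S))$ this gives $|\rho(G)|\le 7$. Both proofs rest on the same two inputs (Lemma~\ref{TVtri} and the outer-automorphism bound from \cite{Hupp,ATLAS}); yours trades P\'alfy's condition for a clean graph-theoretic count, at the cost of needing the identity $\rho(G)=\rho(N)\cup\pi(G/N)$ (which holds because $G/N$, being almost simple, has no normal abelian Sylow subgroups, so $\rho(G/N)=\pi(G/N)$). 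The obstacle you flag---checking $|\pi(\Out(S))\setminus\pi(S)|\le 1$ for the sporadic and alternating entries of Huppert's tables---is exactly what the paper sweeps under the citation ``$4\le|\pi(G/N)|\le 5$ by \cite{Hupp,ATLAS,GAP}''.
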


\begin{proof}
If $\DG$ has 7 vertices then $\DG$ does not contain a $K_4$ and thus there is nothing to prove in this case. Thus we may assume that $|\rho(G)|\ge 8$. It follows that $4\le |\pi(G/N)|\le 5$ by \cite{Hupp, ATLAS, GAP}, which implies that $\mathcal{C}=\rho(G)-\pi(G/N)$ contains at least three primes. Since $\mathcal{C}\subseteq \rho(N)$ and $N$ is solvable, we may use P\'{a}lfy's condition which provides that $\mathcal{C}$ must span at least an edge, say $r\sim l$. Let $\theta(1)\in \cd(N)$ be such that $rl|\theta(1)$. By Lemma~\ref{TVtri} we must have that $\psi(1)/\theta(1)$ divides two distinct primes in $\pi(M/N)$ for some $\psi\in\Irr(M|\theta)$ or $\theta$ extends to $M$ and $M/N\cong \Alt_5$ or $\PSL_2(8)$. It follows immediately that the former must occur. This implies that $\{r,l,p,q\}$ forms a complete cubic for some $q,p\in \pi(M/N)$. This implies that $\deg(q), \deg(p)\ge 5$, a contradiction.
\end{proof}

For the case when $ |\rho(G)|\ge 7$, we will not consider simple groups whose prime graphs have three vertices [Lemma~\ref{simple3primes}] or are complete with at least 4 vertices by Lemma~\ref{K4}.

\begin{lem}\label{Sp}
Assume Hypothesis \ref{hyp}. Then $|\rho(G)|\le 7$ when $S\cong J_1$ and $|\rho(G)|\le 6$ when $S\cong M_{11}$.
\end{lem}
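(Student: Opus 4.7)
The plan is to handle the two cases separately, in each case combining Clifford theory for $G/N\cong S$ with the structure of $\Delta(S)$ as a subgraph of the $4$-regular graph $\DG$.

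For $S\cong J_1$, since $\Out(J_1)=1$ we have $G/N\cong J_1$. By the ATLAS, $\Delta(J_1)$ on $\pi(J_1)=\{2,3,5,7,11,19\}$ has $2$ adjacent to $3,5,7,19$, so $\deg_{\Delta(J_1)}(2)=4$. Because $\Delta(J_1)\subseteq\DG$ and $\DG$ is $4$-regular, vertex $2$ admits no new neighbor in $\DG$; in particular no prime $p\in\rho(G)\setminus\pi(J_1)$ is adjacent to $2$, and no such $p$ is adjacent to every prime of $\pi(J_1)$. Lemma~\ref{lem:219} then forces each such $p$ to be adjacent to $19$. Since $\deg_{\Delta(J_1)}(19)=3$ while $\deg_{\DG}(19)=4$, the vertex $19$ has room for only one further neighbor, so $|\rho(G)\setminus\pi(J_1)|\le 1$, giving $|\rho(G)|\le 7$.

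For $S\cong M_{11}$, both $\Out(M_{11})$ and the Schur multiplier of $M_{11}$ are trivial, so $G/N\cong M_{11}$ and every $G$-invariant $\theta\in\Irr(N)$ extends to $G$. My first step would be to mimic the proof of Lemma~\ref{lem:219} and establish the dichotomy: for each $p\in\rho(G)\setminus\pi(M_{11})$, either $p$ is adjacent to every prime of $\pi(M_{11})$, or $p\sim 11$, or $p\sim 2$ and $p\sim 3$. The key point is that the maximal subgroups of $M_{11}$ have indices $11,12,55,66,165$, and $11$ divides each such index except $12=2^{2}\cdot 3$; when $I_G(\theta)=G$ Gallagher's theorem yields the first alternative, and otherwise $[G:I_G(\theta)]$ is divisible by one of the five maximal indices.

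Next I would exploit that $\Delta(M_{11})$ is a paw: the triangle on $\{2,5,11\}$ together with the pendant edge $3$--$5$. Thus $\deg_{\Delta(M_{11})}(5)=3$, and $5$ has at most one extra neighbor in $\DG$, necessarily in $\mathcal{C}:=\rho(G)\setminus\pi(M_{11})$; by the dichotomy above any such neighbor is also adjacent to $11$. Setting $A=\{p\in\mathcal{C}:p\sim 11\}$ and $B=\{p\in\mathcal{C}:p\sim 2\text{ and }p\sim 3\}$, we have $\mathcal{C}=A\cup B$ with $|A|\le 2$ and $|B|\le 2$ from the slot budget at $11$ and $2$, giving the coarse bound $|\mathcal{C}|\le 4$. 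To sharpen to $|\mathcal{C}|\le 2$ I would run a case analysis against the $4$-regular graphs on $7$, $8$, $9$ vertices (Figures~\ref{fig:1}--\ref{fig:4}): in each such graph, every $4$-vertex subset containing $\Delta(M_{11})$ as a subgraph produces either a prime in $\mathcal{C}$ with only one neighbor in $\pi(M_{11})$ (contradicting Lemma~\ref{TVtri}), or a prime of $\mathcal{C}$ adjacent to $5$ but not $11$ (contradicting the analog), or an adjacency pattern incompatible with the index list $\{11,12,55,66,165\}$.

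The hard part is this final step for $M_{11}$: the combinatorial slot count alone stops at $|\rho(G)|\le 8$, and closing the gap to $|\rho(G)|\le 6$ rests on the interaction between the very restricted set of maximal-subgroup indices of $M_{11}$ and the small number of available $4$-regular graph shapes on $7$--$9$ vertices.
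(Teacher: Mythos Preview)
Your treatment of the $J_1$ case is correct and essentially identical to the paper's: $\deg_{\Delta(J_1)}(2)=4$ leaves no room at $2$, Lemma~\ref{lem:219} then forces every extra prime to neighbor $19$, and $\deg_{\Delta(J_1)}(19)=3$ allows only one such prime.

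For $M_{11}$ you set up the same dichotomy as the paper (each $p\in\mathcal{C}$ is adjacent to all of $\pi(M_{11})$, or to $11$, or to both $2$ and $3$, via the maximal-subgroup indices $11,12,55,66,165$), and you reach a slot bound $|\mathcal{C}|\le 4$. From there, however, you propose a case-by-case elimination against the $4$-regular graphs on $7$--$9$ vertices and acknowledge this as the hard, unfinished part. The paper closes the gap by a different and much shorter device that you do not use: since $\mathcal{C}\subseteq\rho(N)$ with $N$ solvable, P\'alfy's condition guarantees that any three primes in $\mathcal{C}$ span an edge $r\sim l$; choose $\alpha\in\Irr(N)$ with $rl\mid\alpha(1)$ and apply Lemma~\ref{TVtri} (noting $M_{11}\not\cong\Alt_5,\PSL_2(8)$) to obtain $\beta\in\Irr(G|\alpha)$ whose degree is divisible by four distinct primes. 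This produces a $K_4$ in $\DG$, hence $|\rho(G)|\ge 8$, which collides with the slot bound and forces $|\mathcal{C}|\le 2$, i.e.\ $|\rho(G)|\le 6$.

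So the missing idea is precisely P\'alfy applied to $\mathcal{C}$ together with Lemma~\ref{TVtri} applied to the \emph{edge} it yields (you invoke Lemma~\ref{TVtri} only on single primes). With that in hand the final graph-by-graph analysis you propose becomes unnecessary. One caveat: the paper asserts at most \emph{one} prime of $\mathcal{C}$ can neighbor both $2$ and $3$, giving $|\mathcal{C}|\le 3$, whereas your count $|B|\le 2$ yields $|\mathcal{C}|\le 4$; if you only have $|\mathcal{C}|\le 4$, the P\'alfy/$K_4$ argument still reduces you to $|\rho(G)|=8$, and a short look at Figure~\ref{fig:3} (the unique $4$-regular graph on $8$ vertices containing a $K_4$) together with a second application of the same P\'alfy+TVtri step on the remaining pair in $\mathcal{C}$ finishes it---still far less work than the full case analysis you outline.
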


\begin{proof}
By \cite{ATLAS},  we deduce that $G/N=S$.  We use the arguments in the proof of Lemma~\ref{lem:219} above. Let $S\cong M_{11}$. By \cite{ATLAS}, we observe that in $\Delta(S)$, $$\deg(2)=\deg(11)=2, \deg(5)=3 \text{ and }\deg(3)=1. $$ The maximal subgoups of $M_{11}$
 are divisible by either 2 and 3 or 11. It is not difficulty to see that every prime in $\rho(G)\setminus \pi(S)$ is adjacent to either 2 and 3 or to 11. Therefore we can have at most one vertex neighboring 2 and 3 which will make $\deg(2)=4$ since $2\not\sim 3$ in $\Delta(S)$. Also there can only be atmost two neighbours of 11 outside of $\pi(S)$. Thus there should be at most three vertices in $\rho(G)\setminus \pi(S)$. 
 
 Now let $\mathcal{C}=\rho(G)-\pi(S)$. Suppose that $|\mathcal{C}|=3$. Since $\mathcal{C}\subseteq \rho(N)$ and $N$ is simple, we must have that $\mathcal{C}$ spans at least an edge. Let $r,l\in \mathcal{C}$ be such that $r\sim l$. Suppose that $\alpha\in \Irr(N)$ be such that $rl|\alpha(1)$. By Lemma~\ref{TVtri}, $\beta(1)/\alpha(1)$ is divisible by 2 disntinct primes of $\pi(S)$ for some $\beta\in \Irr(G|\alpha)$. This implies that $\beta(1)$ is divisible by 4 distinct primes implying that $\DG$ contains at least eight vertices. Contradicts the previous paragraph. Thus $|\rho(G)|\le 6$. 
 
Now Let $S\cong J_1$. By \cite{ATLAS}, we observe that $$\deg(2)=4, \deg(7)=\deg(19)=3, \deg(3)=\deg(5)=\deg(11)=2. $$Let $\rho(G)-\pi(S)=\mathcal{C}$. Then by Lemma~\ref{lem:219}, each prime in $\mathcal{C}$ is adjacent to either 2 or 19. Since $\deg(2)=4$, two cannot have any more neighbor and thus any prime in $\mathcal{C}$ must be adjacent to 19. However, 19 can only have one more neighbour since $\deg(19)=3$.  
\end{proof}

\begin{lem}\label{lem:others}
Assume Hypothesis \ref{hyp}. Let $S\not\cong \PSL_2(q)$, $q$ a power of some prime number $p$. Then $G$ does not exist.
\end{lem}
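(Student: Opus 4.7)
The plan is to rule out every non-$\PSL_2(q)$ simple group $S$ as the socle of $G/N$ under Hypothesis \ref{hyp}. Since $|\rho(G)|\ge 7$, the opening argument of Lemma \ref{lem:1} shows $\DG$ is $K_5$-free; combined with Lemma \ref{simple3primes} we have $|\pi(S)|\ge 4$, while $|\rho(G)|\le 9$ forces $|\pi(S)|\le 9$. I would split the analysis by $|\pi(S)|$, handling the cases $4$, $5$ and $\ge 6$ in turn.

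For $|\pi(S)|=4$, Lemma \ref{four} shows that the non-$\PSL_2(q)$ options for $S$ are the finitely many simple groups appearing in \cite[Tables 2 and 3]{Hupp}: $\Alt_7$, $M_{11}$, $M_{12}$, $J_2$, $\PSL_3(3)$, $\PSL_3(4)$, $\PSU_3(3)$, $\PSU_3(4)$, $\PSU_3(5)$, $\PSp_4(3)$, and a few others. Lemma \ref{K4} excludes every such $S$ with $\Delta(S)\cong K_4$. For the remainder, I would read the degree sequence of $\Delta(S)$ from ATLAS or \cite{White, White2}, and combine it with Lemma \ref{TVtri} applied to a prime in $\rho(G)\setminus\pi(S)$ to produce either a vertex of $\DG$ of degree exceeding $4$ or a $K_5$ subgraph; Lemma \ref{Sp} already disposes of $J_1$ and $M_{11}$.

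For $|\pi(S)|=5$, the strategy is to show that the induced subgraph of $\DG$ on $\pi(S)$ is a subgraph of the house or the butterfly, whence so is $\Delta(S)$; Lemma \ref{lem:pent} then forces $S\cong \PSL_2(q)$, contradicting the hypothesis. The embedding is justified by inspecting the 4-regular graphs on 7--9 vertices relevant to the proof (Figures \ref{fig:1}--\ref{fig:4}): a $K_4$ inside $\pi(S)$ would create four outgoing edges to the remaining $|\rho(G)|-5$ primes, and a case check on the figures shows this produces either a $K_5$ or a vertex of degree $5$; remaining configurations are handled by applying Lemma \ref{TVtri} to primes in $\rho(G)\setminus\pi(S)$ to force an adjacency that inflates some vertex degree beyond $4$.

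For $|\pi(S)|\ge 6$, the non-$\PSL_2(q)$ candidates with non-complete $\Delta(S)$ are drastically reduced by Lemma \ref{lem0} and the classification in \cite{White, White2, White3}: they are $M_{23}$, $J_1$ (handled by Lemma \ref{Sp}), and the infinite Lie families $^2B_2(q^2)$, $\PSL_3(q)$, $\PSU_3(q^2)$ under the relevant conditions on $q$. The ATLAS immediately gives $\deg_{\Delta(M_{23})}(2)\ge 5$, contradicting 4-regularity. For the Lie-type families, \cite{White2, White3} produces a vertex of $\Delta(S)$ adjacent to every other prime of $\pi(S)$, which, together with at least one extra prime in $\rho(N)$, again forces a vertex of $\DG$ of degree strictly greater than $4$. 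The principal obstacle is the $|\pi(S)|=4$ case, where each entry of Huppert's tables requires its own ATLAS computation of the degree sequence of $\Delta(S)$.
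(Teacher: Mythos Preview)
Your organization by $|\pi(S)|$ broadly parallels the paper's, and for $|\pi(S)|\ge 6$ your observation that a complete vertex of $\Delta(S)$ (which exists for ${}^2B_2$, $\PSL_3$, $\PSU_3$ by \cite{White2,White3}) already has degree $\ge 5$ in $\DG$ is actually cleaner than what the paper writes out. There are, however, two genuine gaps.

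The first is your $|\pi(S)|=5$ strategy via Lemma~\ref{lem:pent}. You claim that a $K_4$ among $\pi(S)$ inside $\DG$ forces either a $K_5$ or a vertex of degree $5$; this is false. The $4$-regular graph of Figure~\ref{fig:4}(a) is $K_5$-free and contains a five-vertex induced subgraph with a $K_4$ (precisely the configuration of Figure~\ref{fig:5}). Hence you cannot conclude that the subgraph of $\DG$ on $\pi(S)$ embeds in the house or the butterfly, and Lemma~\ref{lem:pent} is not available. The paper bypasses this entirely: it uses Lemma~\ref{lem0} to identify $S$ as $\PSL_3(q)$ or $\PSU_3(q^2)$ (once $\PSL_2$ and the cases already handled are excluded), reads off from \cite{White2} that $\Delta(S)$ contains the Figure~\ref{fig:5} pattern, deduces that $|\rho(G)|=9$ with $\DG\cong$ Figure~\ref{fig:4}(a), and then applies Lemma~\ref{TVtri} to an adjacent pair in $\mathcal C=\rho(G)\setminus\pi(G/N)$ to produce a second $K_4$ overlapping the first, which that graph cannot accommodate.

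The second gap is $J_1$. Lemma~\ref{Sp} only yields $|\rho(G)|\le 7$, and Hypothesis~\ref{hyp} permits $|\rho(G)|=7$, so you are not done. The paper closes this by picking $x\in\rho(G)\setminus\pi(J_1)$ and $\theta\in\Irr(N)$ with $x\mid\theta(1)$: if $\theta$ is $G$-invariant it extends (the Schur multiplier of $J_1$ is trivial) and $\deg(x)\ge 6$; otherwise $I_G(\theta)/N$ sits in a maximal subgroup of $J_1$, and by \cite{ATLAS} every maximal index has at least three prime divisors, forcing a $K_4$ in $\DG$ and hence $|\rho(G)|\ge 8$, now contradicting Lemma~\ref{Sp}. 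Finally, your $|\pi(S)|=4$ sketch should not rely solely on Lemma~\ref{TVtri} for the non-complete cases: the paper's treatment of ${}^2B_2(8)$ in particular requires a direct inertia-group and projective-degree computation from \cite{ATLAS}.
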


\begin{proof}
Assume that $M/N\cong J_1$. Then it follows that $G/N=M/N$ and thus we have that $\pi(G/N)=\pi(M/N)$ with $|\pi(M/N)|=6$. Since $|\rho(G)|\ge 7$, the set $\mathcal{C}=\rho(G)\setminus \pi(G/N)$ is nonempty. Let $x\in \mathcal{C}$. Let $\theta(1)\in\cd(N)$ be such that $x|\theta(1)$. Suppose that $\theta$ is $G$-invariant. Since the Schur multiplier of $J_1$ is trivial, it follows that $\theta$ extends to $G$ by Theorem~\ref{TSchur}. This implies that $\deg(x)=6$ in $\DG$, a contradiction. Hence, $T=I_G(\theta)<G$. Thus $T/N$ is a subgroup of some maximal subgroup $K/N$ of $J_1$. By \cite{ATLAS}, we obtain that $|G:K|$ is divisible by at least 3 primes in $\pi(M/N)$. Thus $\DG$ must contain a $K_4$ as a subgraph. This implies that $|\rho(G)|\geq 8$. This contradicts Lemma~\ref{Sp}.

Suppose that $M/N\cong {}^2B_2(8)$. Then we have that $M/N\cong G/N$ since otherwise by $G/N\cong {}^2B_2(8)\cdot 3$. By \cite{ATLAS}, we have that $\{3,5,7,13\}$ form a complete subgraph of $\DG$. Implying that $\DG$ has at least eight vertices. Thus $|\mathcal{C}|\geq 3$ and by P\'{a}lfys condition, there is $r,l\in \mathcal{C}$ such that $r\sim l$. Let $\theta\in \Irr(N)$ be such that $rl|\theta(1)$. By Lemma~\ref{TVtri}, $\theta$ extends to $M$ or $\psi(1)/\theta(1)$ is divisible by two distinct primes in $\pi(M/N)$. This would result to one of the primes in $\{5,7,13\}$ having degree at least 5, a contradiction. Let $G/N\cong M/N\cong {}^2B_2(8)$. Then again we have that $|\mathcal{C}|\geq 3$. By P\'{a}lfys condition there is $r,l\in \mathcal{C}$ such that $r\sim l$. Let $\theta\in \Irr(N)$ be such that $rl|\theta(1)$ and let $T=I_G(\theta)$. Suppose that $\theta$ is not $G$-invariant. Then $|G:K|$ divides $|G:T|$ for some maximal subgroup $K/N$ of ${}^2B_2(8)$. By \cite{ATLAS}, the possibilities of $\pi(G:K)$ are $\{\{5,13\}, \{2,5,7\},\{2,7,13\},\{2,5,13\}\}$. By Clifford's correspondence theorem, $|G:K|\theta(1)$ divides some degree in $\cd(G|\theta)$. We must have that $|G:K|=\{5,13\}$ since otherwise we would have a $K_5$. Now let $t\neq r,l$ be in $\mathcal{C}$ and let $\phi\in \Irr(N)$ be such that $t|\phi(1)$. Let $G_\phi$ be the stabilizer of $\phi$ in $G$. Then we must have that $G_\phi=G$ since otherwise we have that either deg(5) or deg(13)
is greater than or equal to 5. Also, $\phi$ does not extend to $G$, and thus we may use the projective degrees as in \cite{ATLAS}. In this case we have that $\cd(G|\phi)=\{\phi(1)a|a\in \{40,56,64,104\}\}$. Since $40\phi(1)$ is in $\cd(G|\phi)$, we must have that $\deg(5)\geq 5$ in $\DG$, a contradiction. Thus we must have that $T=G$. If $\theta$ extends to $G$ we have that $\DG$ contains a $K_5$ as a subgraph. Thus we must have that $\theta$ does not extend to $G$. Hence we use the projective degrees provided in  \cite{ATLAS}, we have that $\cd(G|\theta)=\{\theta(1)a|a\in \{40,56,64,104\}\}$. Since $104\theta(1)\in \cd(G|\theta)$ we have that $ \{2, 13,r,l\}$ form a complete cubic which implies that $\deg(13)\geq 5$, a contradiction.

Let   $M/N\cong {}^2B_2(32)$. Then $\pi(G/N)=\pi(M/N)$ and thus $|\mathcal{C}|\ge 3$. By previous arguments, let $rl|\theta(1)$ for some $r,l\in \mathcal{C}$ and $\theta\in \Irr(N)$. By Lemma~\ref{TVtri}, 
$\chi(1)/\theta(1) $ is divisible by two distinct primes in $\pi(M/N)$ for some $\chi\in\Irr(M|\theta)$. 
We now have that $\DG$ contains a complete cubic. This implies that $|\rho(G)|\ge 8$, which implies that $|\mathcal{C}|\ge4$. By P\'{a}lfy's condition, $\mathcal{C}$ spans at least two edges. Thus it suffices to assume that there is $\{x,y\}$ different from $\{r,l\}$ such that $x \sim y$. By previous arguments we obtain that at least one vertex in $\pi(M/N)$ has degree at least 5, a contradiction. A similar argument applies to any $M/N\in \{\PSL_4(q), \PSU_3(q^2) \}$ where $q$ { a power of a prime }$p$ with $|\pi(M/N)|=4$ such that $\pi(M/N)$ does not span a complete cubic.


Suppose that $\pi(M/N)|\geq 5$ and $\Delta(M/N)$ is not complete. Then $M/N\cong \PSL_3(q)$ or $\PSU_3(q^2)$ with $q$ a power of $p$ and $q-1\neq 2^i3^j, i\ge 1, j\ge 0$ or $q+1\neq 2^i3^j$, $i,j\ge 0$ respectively. In this case $\DG$ contains a subgraph isomorphic to Figure~\ref{fig:5}. This implies that $|\rho(G)|=9$ and $\DG$ is isomorphic to Figure~\ref{fig:4}(a) and $\mathcal{C}$ is contained in one of the complete cubic subgraphs while four primes of $\pi(M/N)$ form the other complete subgraph.  But then by using Lemma~\ref{TVtri}, we should have that two primes in $\mathcal{C}$ form a complete subgraph with two primes in $\pi(M/N)$, this is not achieved, a contradiction.
\end{proof}

The following results or facts  will be used to prove subsequent results.

\begin{rem}\textup{\cite[Remark 2]{Zainab}}\label{remark2}
Let $M/N\cong \PSL_2 (q)$ where $q\not= 9$ and $\theta \in\Irr(N)$. If $\theta$ is $M$-invariant, then there are irreducible characters in $\Irr(M|\theta)$ such that their degrees are divisible by $\theta(1)(q \pm 1)$. 
\end{rem}


\begin{lem}\textup{\cite[Lemma 3.7]{McVey}}\label{McVey}
Let $G$ be a group  and let $N\unlhd G$ be such that $G/N\cong \PSL_2(q), q=p^f$
for some prime  $p$ and some positive integer $f$. Let $\theta\in \Irr(N)$ and $T$ be the stabilizer of $\theta$ in $G$ with $|G : T | = t$. Suppose that $T/N\cong \PSL_2(p^k)$ or $\PGL_2(p^k)$ with $7\le p^k<q=p^f$.
Then $\pi(t)$ has nontrivial intersection with each of the three sets ${p}, \pi(q - 1)$, and $\pi(q + 1)$
where $q > 5$. Moreover, at least one of the following holds:\begin{enumerate}
\item[(a)] There are degrees $\psi_1(1) , \psi_2(1)\in \cd(G|\theta)$ so that $p(q - 1)$ divides $\psi_1(1)$ and $p(q + 1)$ divides $\psi_2(1)$.
\item[(b)] $T /N$ is isomorphic to one of $\PSL_2 (9)$ or $\PGL_2 (9)$ and both $6t\theta(1)$ and $15t\theta(1)$ divide
degrees in $\cd(G|\theta)$.
\end{enumerate}

\end{lem}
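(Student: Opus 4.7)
My plan is to compute the index $t = |G:T|$ using Dickson's classification of subgroups of $\PSL_2(q)$, and then to lift characters from $T$ via Clifford correspondence. First I would note that by Dickson, the hypothesis $T/N \cong \PSL_2(p^k)$ forces $k \mid f$ (and $T/N \cong \PGL_2(p^k)$ forces $2k \mid f$ for odd $p$), so
\[ t = \frac{|\PSL_2(p^f)|}{|T/N|} \]
admits a clean factorization. Writing $q = p^f$, the numerator is $\tfrac{1}{d}p^f(p^{2f}-1)$ with $d = \gcd(2,p-1)$, and the denominator carries matching $p^k$ and $(p^{2k}-1)$ factors. The $p$-part of $t$ is therefore $p^{f-k}\geq p > 1$, giving $p \in \pi(t)$. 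The residual quotient $(p^{2f}-1)/(p^{2k}-1)$ is a product of cyclotomic values $\Phi_n(p)$ for $n \mid 2f$, $n \nmid 2k$; by separating those indexing $q-1$ from those indexing $q+1$, I would show $t$ meets both $\pi(q-1)$ and $\pi(q+1)$. The bound $p^k \geq 7$ ensures that no genuine prime divisor is lost to the $\gcd(2,\cdot)$ corrections.

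For the main dichotomy, Clifford correspondence gives $\Irr(G|\theta) = \{\psi^G : \psi \in \Irr(T|\theta)\}$, so every degree in $\cd(G|\theta)$ has the form $t\psi(1)$ with $\psi \in \Irr(T|\theta)$. Since $\theta$ is $T$-stable, I would apply Remark~\ref{remark2} (with $M$ replaced by $T$ and $q$ by $p^k$), valid whenever $p^k \neq 9$: it supplies $\widetilde\psi_1,\widetilde\psi_2 \in \Irr(T|\theta)$ with $\theta(1)(p^k-1) \mid \widetilde\psi_1(1)$ and $\theta(1)(p^k+1) \mid \widetilde\psi_2(1)$. Combining with the divisibilities from Step~1 — specifically that $p\cdot(p^f-1)/(p^k-1)$ and $p\cdot(p^f+1)/\gcd(p^k+1,p^f+1)$ divide $t$ — the products $t\widetilde\psi_1(1)$ and $t\widetilde\psi_2(1)$ become divisible by $p(q-1)$ and $p(q+1)$, yielding conclusion (a).

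The exceptional case $p^k = 9$, where Remark~\ref{remark2} is unavailable because $\PSL_2(9)\cong \Alt_6$ has a nontrivial Schur multiplier and distinctive character table, produces conclusion (b). Here I would consult the character tables of $\PSL_2(9)$ and $\PGL_2(9)$ directly: both carry non-linear irreducibles whose degrees include $6$ and $15$, and these lift through Clifford correspondence to give $6t\theta(1),\,15t\theta(1) \in \cd(G|\theta)$. The main obstacle is the bookkeeping in Step~1: one must track how the cyclotomic factors $\Phi_n(p)$ distribute between $\pi(q-1)$ and $\pi(q+1)$ according to the parity of $f/k$ (and in the $\PGL$ case, of $f/(2k)$), and verify that in every admissible configuration a prime of each type genuinely survives in $t$.
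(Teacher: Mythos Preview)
The paper does not prove this lemma; it is quoted as \cite[Lemma~3.7]{McVey}, so there is no in-paper argument to compare against. Your outline is the natural one, but Step~2 contains a concrete error.

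You assert that $p\,(p^f-1)/(p^k-1)$ divides $t$, and then combine this with the factor $p^k-1$ coming from Remark~\ref{remark2} to obtain $p(q-1)\mid\psi_1(1)$. This divisibility fails whenever $f/k$ is even. Take $p=3$, $k=4$, $f=8$: then
\[
t=\frac{|\PSL_2(3^8)|}{|\PSL_2(3^4)|}=3^{4}(3^{8}+1)=2\cdot 3^{4}\cdot 17\cdot 193,
\qquad
\frac{p^f-1}{p^k-1}=\frac{6560}{80}=82=2\cdot 41,
\]
and $41\nmid t$. The mechanism is exactly the parity issue you flag only as ``bookkeeping'': when $f/k$ is even one has $p^k+1\mid p^f-1$, so a prime such as $41\in\pi(p^k+1)$ lies in $\pi(q-1)$ yet its full contribution to $p^{2f}-1$ already sits inside $p^{2k}-1$ and cancels out of $t$.

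The gap is not repairable by choosing the other character from Remark~\ref{remark2}. In the same example, if $\theta$ is linear and extends to $T$ (which one can arrange by taking $N$ to be the permutation module for $\PSL_2(3^8)$ on the cosets of $\PSL_2(81)$, with $G$ the split extension), then $\cd(G|\theta)=t\cdot\{1,40,80,81,82\}$, and \emph{none} of these is divisible by $p(q-1)=2^{5}\cdot 3\cdot 5\cdot 41$: the degrees carrying $41$ miss $5$, and those carrying $5$ miss $41$. Since $p^k=81\neq 9$, alternative~(b) is unavailable, so conclusion~(a) as literally stated cannot hold here. This strongly suggests that the paper's transcription of McVey's lemma is slightly off (most likely ``$p(q\pm 1)$ divides'' should be a statement about prime divisors, or about $p^k\pm1$ rather than $q\pm1$). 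Before refining your argument you should consult \cite{McVey} directly for the exact formulation.
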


\begin{lem}\textup{\cite[Lemmas 3.2-3.7]{McVey}}\label{conclusion} Assume Hypothesis \ref{hyp}. Let $S\cong \PSL_2(2^f)$, $f\ge 3 $. Let $\theta\in\Irr(N)$ and $T=I_M(\theta)$. Then  
\begin{enumerate}
\item[(A)] $|M:T|$ is divisible by all primes in two of the three sets $\{2\}, \pi(2^f - 1)$, and $\pi(2^f + 1)$.
\item[(B)] $|M:T|$ is divisible by all primes in $\rho(M/N) \setminus \{2, 3\}$ and some degree in $\cd(M|\alpha)$ is divisible by one of $2|M:T|\alpha(1)$ or $3|M:T|\alpha(1)$.
\item[(C)] Lemma~\ref{McVey} applies.
\item[(D)] $|M:T|$ is divisible by all primes in $\pi(M/N) \setminus \{2, 3, 5\}$, the integer $q > 5$, and either $3|M:T|\theta(1), 4|M:T|\theta(1)$, and $5|M:T|\theta(1)$ divide degrees in $\cd(G|\theta)$, or $6|M:T|\theta(1)$ divides a degree in $cd(G|\theta)$.
\end{enumerate}

\end{lem}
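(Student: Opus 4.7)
The proof plan is to appeal directly to Lemmas 3.2--3.7 of \cite{McVey}, which together establish exactly these four statements in the setting of an almost simple group with socle $\PSL_2(q)$; Hypothesis~\ref{hyp} with $S \cong \PSL_2(2^f)$, $f \ge 3$, is the specialization to $q = 2^f$, so no work beyond aligning notation should be required. In that setup $T = I_M(\theta)$ is the inertia subgroup, and by Clifford's correspondence the irreducible constituents of $\theta^M$ are of the form $\psi^M$ for $\psi \in \Irr(T \mid \theta)$, with $\psi^M(1) = [M:T]\psi(1)$.

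The underlying structural input is Dickson's classification of the subgroups of $\PSL_2(q)$: for $q = 2^f$ these are Borel subgroups of order $q(q-1)$, dihedral subgroups of orders $2(q-1)$ and $2(q+1)$, subfield subgroups $\PSL_2(2^k)$ for $k \mid f$, together with a short list of exceptional subgroups isomorphic to $\Alt_4$, $\Alt_5$ or $\Sym_4$. Part (A) is then proved by a case analysis on the possible isomorphism types of $T/N$: for each, one reads off $[M:T]$ and checks that its prime divisors meet at least two of the three sets $\{2\}$, $\pi(q-1)$, $\pi(q+1)$.

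Part (B) comes from combining (A) with Clifford's correspondence. For every admissible $T/N$ one exhibits $\psi \in \Irr(T \mid \theta)$ of degree $2\theta(1)$ or $3\theta(1)$, and inducing produces a degree in $\cd(M \mid \theta)$ of the form $2[M:T]\theta(1)$ or $3[M:T]\theta(1)$; the index analysis from (A) then shows that $[M:T]$ absorbs every prime in $\rho(M/N) \setminus \{2, 3\}$. Part (C) is a direct invocation of Lemma~\ref{McVey} applied to the current $T$, and (D) is a sharpening of the same analysis that tracks the smallest admissible factor ($3$, $4$, $5$, or $6$) coming from degrees in $\cd(T \mid \theta)$ for each type of $T/N$.

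The main obstacle in reproving these statements from scratch would be the subgroup bookkeeping across the several isomorphism types of $T/N$, and in particular verifying that the pathological small primes $2$, $3$ and $5$ really do need to be excluded in (B) and (D) only in those cases where they genuinely cannot be avoided. Since \cite{McVey} carries out this case analysis in full for general $\PSL_2(q)$, the proof reduces to citing those lemmas with $q = 2^f$ and $f \ge 3$.
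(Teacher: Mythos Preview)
Your proposal is correct and matches the paper's approach exactly: the paper gives no proof of this lemma at all, simply citing \cite[Lemmas~3.2--3.7]{McVey}, and you correctly identify that the result is a direct import from McVey specialized to $q=2^f$. One small caveat worth noting: the four conclusions (A)--(D) are not simultaneous but are alternatives indexed by the Dickson type of $T/N$ (roughly, (A) for cyclic/dihedral/Borel, (B) for $\Alt_4$ or $\Sym_4$, (C) for subfield $\PSL_2$ or $\PGL_2$, (D) for $\Alt_5$); your exposition of (B) as ``combining (A) with Clifford'' and holding ``for every admissible $T/N$'' slightly obscures this disjunctive structure, which is how the lemma is actually invoked throughout the paper.
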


\begin{lem}\label{psl2q}
Assume Hypothesis \ref{hyp}. Let $S\cong \PSL_2(q)$ where $q$ is a power of some prime $p\neq 2$. Then $G$ does not exist. 
\end{lem}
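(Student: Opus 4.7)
The plan is to derive contradictions by contrasting the three-component structure of $\Delta(\PSL_2(q))$ for odd $q$ with the 4-regularity of $\DG$ and the character-theoretic tools of Lemmas~\ref{McVey}--\ref{conclusion}. First I would record that, since $q$ is odd, the only degree in $\cd(\PSL_2(q))$ divisible by $p$ is the Steinberg degree $q$, so $p$ is isolated in $\Delta(S)$; the remaining vertices lie on $\pi(q^2-1)$, with $2$ a universal vertex of that component and $\pi((q-1)/2)$, $\pi((q+1)/2)$ each spanning a clique through $2$. By Lemmas~\ref{simple3primes} and~\ref{K4} I may assume $|\pi(S)|\ge 4$ and that $\pi(M/N)$ does not span a complete $K_4$ in $\DG$, while Hypothesis~\ref{hyp} gives $7\le|\rho(G)|\le 9$, hence $|\mathcal{C}|\ge 2$ for $\mathcal{C}:=\rho(G)\setminus \pi(M/N)$.

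I would then split on $|\pi(S)|\in\{4,5\}$. For $|\pi(S)|=4$, Lemma~\ref{four} restricts $S$ to $\PSL_2(r)$ or $\PSL_2(3^t)$, and $|\mathcal{C}|\ge 3$; P\'alfy's condition forces an edge $r\sim l$ in $\mathcal{C}$. Choosing $\theta\in\Irr(N)$ with $rl\mid\theta(1)$ and applying Lemma~\ref{TVtri}, the case where $\theta$ extends to $M$ invokes Remark~\ref{remark2} (using $q\neq 9$), producing degrees $\theta(1)(q\pm 1)$ that make $r,l$ adjacent to every prime in $\pi(q^2-1)$, hence to $2$ and to the remaining primes of $\pi(S)\setminus\{p\}$; a second P\'alfy edge in $\mathcal{C}$, or a third prime of $\mathcal{C}$ that is forced by 4-regularity to reach into $\pi(S)$, then overloads $\deg(2)$ or yields a $K_5$. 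In the alternative branch of Lemma~\ref{TVtri}, we obtain $\{r,l,u,v\}$ spanning a $K_4$ with $u,v\in\pi(S)$; a further edge in $\mathcal{C}$ (guaranteed by P\'alfy) then pushes $\deg(u)$ or $\deg(v)$ past $4$ by the same adjacency mechanism.

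For $|\pi(S)|=5$, I would apply Lemma~\ref{McVey} via Lemma~\ref{conclusion}(C) to a suitable $\theta\in\Irr(N)$ and its stabiliser $T=I_M(\theta)$: after verifying the hypotheses using the maximal-subgroup list of $\PSL_2(q)$ (Borel, dihedral, subfield $\PSL_2(p^k)$, and small exceptional subgroups), we obtain $\psi_1,\psi_2\in\Irr(G\mid\theta)$ with $p(q-1)\mid\psi_1(1)$ and $p(q+1)\mid\psi_2(1)$, so $p$ is adjacent in $\DG$ to every prime in $\pi(q^2-1)$. Since $|\pi(q^2-1)|=4$ already, 4-regularity forces $\deg(p)=4$ with no neighbour in $\mathcal{C}$; but the universal vertex $2\in\pi(S)$ now has $4$ neighbours inside $\pi(M/N)$ (including $p$), so $2$ also has no neighbour in $\mathcal{C}$. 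A degree count on the primes of $\mathcal{C}$, which must find all their neighbours in $(\pi(S)\setminus\{p,2\})\cup\mathcal{C}$, combined with the clique structure of $\pi((q\pm 1)/2)$ in $\Delta(S)$, then contradicts the cycle counts realisable in the 4-regular graphs of Figures~\ref{fig:1}--\ref{fig:4} and~\ref{fig:6vert}.

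The main obstacle will be the careful case-by-case control of the stabiliser $T/N$ via the maximal subgroups of $\PSL_2(q)$, together with the small exceptional values of $q$, particularly $q=9$, where Remark~\ref{remark2} fails and the alternative branch of Lemma~\ref{McVey}(b) must be invoked. Those exceptional $q$ would be disposed of by direct inspection with \cite{ATLAS} and the explicit lists of 4-regular graphs of orders $7$--$9$, ensuring that no small-$q$ loophole survives the generic argument.
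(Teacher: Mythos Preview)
There are genuine gaps. First, you only treat $|\pi(S)|\in\{4,5\}$, but nothing you cite bounds $|\pi(S)|$ from above: Lemma~\ref{K4} has $|\pi(S)|=4$ in its hypothesis and does not eliminate larger values. Since Hypothesis~\ref{hyp} allows $|\rho(G)|$ up to $9$, the range $|\pi(S)|\ge 6$ must be handled separately. The paper does this as its Case~1, using that $2$ is adjacent to every prime in $\pi(q^2-1)$ to overload $\deg(2)$; but even there the case $|\pi(S)|=6$ still requires the $M$-invariance and maximal-subgroup index arguments, and your outline omits this entirely. (Incidentally, $\Delta(\PSL_2(q))$ for odd $q$ has \emph{two} components, not three.)

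Second, and more seriously, Lemma~\ref{conclusion} is stated only for $S\cong\PSL_2(2^f)$ and cannot be invoked for odd $q$. Your $|\pi(S)|=5$ argument then rests on conclusion~(a) of Lemma~\ref{McVey}, namely that $p(q\pm 1)$ divide degrees in $\cd(G\mid\theta)$, but that lemma applies \emph{only} when the stabiliser $T/N$ is of subfield type $\PSL_2(p^k)$ or $\PGL_2(p^k)$. For the remaining Dickson subgroup types---elementary abelian, cyclic, dihedral, Frobenius, $\Alt_4$, $\Sym_4$, $\Alt_5$---and for the $M$-invariant case $T=M$, that conclusion is simply false, and a different argument is required for each. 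The paper accordingly works through every one of these types, in both the $|\pi(S)|=5$ and $|\pi(S)|=4$ cases, and several (for instance $\Alt_5$ and the Frobenius type) need nontrivial extension or projective-degree computations that your sketch does not supply. Consequently your claimed adjacency of $p$ to all of $\pi(q^2-1)$, on which the closing degree count depends, is not established in general. The same omission affects your $|\pi(S)|=4$ case, where ``a second P\'alfy edge \dots\ then overloads $\deg(2)$'' is asserted without the stabiliser analysis that actually produces the extra adjacencies.
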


\begin{proof}
By Lemma~\ref{simple3primes}, we have that $|\pi(S)|\ge 4$.  

{\bf Case 1: $|\pi(S)|\ge 6$}

If $|\pi(S)|\ge 7$, then by \cite[Theorem 3.1]{White2} we have that $|\pi((q-1)(q+1))|=6$ with (in no particular order)$$\{|\pi(q-1)|,|\pi(q+1)|\}=\{3,4\} \text{ or }\{2,5\}.$$In both cases we have that $\deg(2)=5$ in $\Delta(S)$. So it suffices to consider that case when $|\pi(S)|=6$. 
In this case we must have that $|\pi((q-1)(q+1))|=5$ with $$\{|\pi(q-1)|,|\pi(q+1)|\}=\{3,3\} \text{ or }\{2,4\}.$$ We claim that $|\pi(G/N)|=|\pi(S)|$. Suppose that $t\in \phantomsection(G/N)\setminus \pi(S)$. By \cite[Theorem A]{WhiteE}, we have that $t\sim r$ for all $r\in \pi(q^2-1)$. But $\deg(t)$ becomes five, a contradiction. Thus $\mathcal{C}=\rho(G)-\pi(G/N) $  is nonempty. 
Let $r\in \mathcal{C}$ and let $\theta\in \Irr(N)$ be such that $r|\theta(1)$. Suppose that $\theta$ is $M$-invariant, then by Remark~\ref{remark2} we have that both $\{r\}\cup\pi(q\pm1)$ form complete subgraphs of $\DG$. In either case we obtain that $\deg(2)\ge 5$, a contradiction. Thus we may assume that $M_\theta=I_M(\theta)<M$. In this case we have that $|M:M_\theta|\theta(1)$ divides the degrees of all members of $\Irr(M|\theta)$. We must have that $M_\theta/N\le K/N $ for some maximal subgroup $K/N$ of $S$. This therefore implies that $|M:K|$ divides $ |M:M_\theta|$ and thus $|M:K|\theta(1)$ divides the degrees of all irreducible characters in $\Irr(M|\theta)$. By \cite[Hauptsatz II.8.27]{Hp}, the indices
of all maximal subgroups of $M/N$ is divisible by at least three distinct primes. In this case we obtain that some degree in $\pi(q\pm 1)$ has degree at least 5, a contradiction.

{\bf Case 2: $|\pi(S)|=5$}

By \cite[Theorem B]{TVtri}, we have that $\Delta(S)$ contains a triangle. We claim that $2\le |\pi(q\pm 1)|\le 3$. Suppose on the contrary that $\Delta(S)$ contains a $K_4$ and an isolated vertex. In this case we have that $|\rho(G)|\ge 8$. Also, we can see that if $t\in \pi(G/N)-\pi(S)$, then $\Delta(G/N)$ contains a $K_5$. So we must have that $|\pi(S)|=|\pi(G/N)|$. Now we must have that $|\mathcal{C}|\ge 3$ and by P\'{a}lfy's condition, $\mathcal{C}$ spans atleast an edge. Let $r,l\in \mathcal{C}$ be such that $rl|\alpha(1)$ for some $\alpha\in \Irr(N)$. Then by Lemma~\ref{TVtri}, we obtain that $\DG$ contains two $K_4$'s whose intersection is nonempty, a contradiction. So we may assume that $2\le |\pi(q\pm 1)|\le 3$. Suppose that $|\pi(G/N)|>|\pi(S)|$ and let $t\in \pi(G/N)\setminus \pi(S)$. Again, we have that $t(q\pm 1)$ divides some degree of $\cd(G/N)$. This implies that $\Delta(G/N)$ would contain a subgraph isomophic to Figure~\ref{fig:5}. This implies that $|\rho(G)|=9$. Which implies that $|\mathcal{C}|\ge 3$ and suppose that $r\sim l$ in $\Delta(N)$ with $r,l\in \mathcal{C}$. Let $\theta\in\Irr(N)$ be such that $rl|\theta(1)$. By Lemma~\ref{TVtri}, $\{r,l,u,v\}$ form a complete cubic for some primes $u, v\in \pi(S)$. In this case we obtain that $\DG$ must contain two complete cubic subgraphs whose intersection is non-empty, a contradiction. Thus $|\pi(S)|=|\pi(G/N)|$. Let $r,l\in \mathcal{C}$ be such that $r\not\sim l$. Let $\theta_1,\theta_2\in \Irr(N)$ be such that $r|\theta_1(1)$ and $l|\theta_2(1)$. Suppose that $\theta_i$ is $M$-invariant for some $i\in \{1,2\}$. Then it follows by Remark~\ref{remark2} that $\theta_i(1)(q\pm 1)$ divide some degrees of irreducible charactes in $\Irr(M|\theta_i)$. This implies that $\DG$ admits a subgraph isomorphic to Figure~\ref{fig:5}, which in turn implies that $|\rho(G)|= 9$. In particular, $\DG$ is isomorphic to Figure~\ref{fig:4}(a). In this case we must have that $|\mathcal{C}|=4$ and by P\'{a}lfy's condition, we have that $\mathcal{C}$ spans at least two edges. This inturn (by Lemma~\ref{TVtri}) implies that $\DG$ has two complete cubic subgraphs and $r,l$ are not contained in the same $K_4$. It will suffice to let $x,y\in \mathcal{C}$ with $\{r,l\}\cap \{x,y\}=\emptyset$ and let $\alpha, \beta\in \Irr(N)$ be such that $xr|\alpha(1)$ and $ly|\beta(1)$. Then it is not hard to see that $I_M(\alpha), I_M(\beta)<M$ by Remark~\ref{remark2}. We see that $I_M(\alpha)/N$ and  $I_M(\beta)/N$ are some subgroups of $\PSL_2(q)$ listed in \cite[Hauptsatz II.8.27]{Hp}. Observe that their indices in $M/N$ will be divisible by either $p$ or 2. It is then not hard to see that the two $K_4$'s intersection is non-empty or $\deg(2)\ge 5$, a contradiction. 
%
It follows that both $\theta_1, \theta_2$ are not $M$-invariant.
Let $M_{\theta_i}$ be the stabilizer of the character $\theta_i$ in $M$ for each $i\in\{1,2\}$. We observe that $\overline{H}=M_{\theta_1}/N$ is one of subgroups of $\PSL(2,q)$ listed in \cite[Hauptsatz II.8.27]{Hp}. Suppose that $\overline{H}$ is the elementary abelian $p$-group. Then we have that $\{r\}\cup \pi(q^2-1)$ form a $K_5$, a contradiction. Now suppose that $\overline{H}$ is the cyclic group of order $z$ where $z\left|\frac{q\pm 1}{(q-1,2)}\right. $. In this case we have that $\{r,p\}\cup \pi(q\pm 1)$ forms a complete subgraph. This obtains that $\deg(2)\ge 5$, contradiction. Now suppose that $\overline{H}\cong D_{z}$ with $z$ as above. Let $C/N\cong C_z\le D_z$. Then $\theta_1$ extends to $C$ by \cite[Corollary 11.22]{Isaacs}. Let $\hat{\theta_1}$ be an extension of $\theta_1$ in $\Irr(C)$. If $\hat{\theta_1}$ is $M_{\theta_1}$ invariant, then $\hat{\theta_1}$ extends to $M_{\theta_1}$ and thus $2\theta_1\in \cd(M_{\theta_1}|\theta_1)$ by Gallagher's Theorem. Otherwise $I_{M_{\theta_1}}(\hat{\theta_1})=C$ and by Clifford's theorem, we have that $2\theta_1(1)\in \cd(M_{\theta_1}|\theta_1)$. This therefore implies that $2|M:M_{\theta_1}|\theta_1(1)$ is a degree in $\cd(M)$. Since $p$ is a divisor of $|M:M_{\theta_1}|$ we must have that $\{r,p,2\} $ form a triangle. This would imply that  $\deg(2)\ge 5$, a contradiction. Suppose that $\overline{H}\cong \Alt_4$. 

{\bf claim:} {\em $p\neq 3$ and $2^2$ is the highest power of $2$ that divides $|S|$}. In particular, we claim that $\Alt_4$ is a Hall \{2,3\}-subgroup of $S$. Suppose that $p=3$. Then $f> 3$ and thus 3 divides $|M:M_{\theta_1}|$. Since $|M:M_{\theta_1}|\theta_1(1)$ divides some degree in $\cd(M|\theta_1)$, we have that $|\pi(S)|=\pi(M:M_{\theta_1})$ or $2 \notin \pi(M:M_{\theta_1})$. If $2\in \pi(M:M_{\theta_1})$, then $\DG$ contains a $K_6$ as a subgraph, a contradiction. Also, since 3 divides $|M:M_{\theta_1}|$, then we have that $|\pi(M:M_{\theta_1})|=4$. In this case we also have that $\DG$ must contain a $K_5$ with vertices $\{r\}\cup \pi(M:M_{\theta_1})$. 

Suppose that $\theta_1$ extends to $M_\theta$, then by Gallagher's theorem, $2\theta_1(1)$ or $3\theta_1(1)$ divides some degree in $\cd(M_{\theta_1}|\theta_1)$, in which case we obtain that $\DG$ contains a $K_5$ as a subgraph. So we may suppose that $\theta_1$ does not extend to $M_{\theta_1}$. By \cite[Corollary 11.29]{Isaacs}, there is a  $\psi\in \Irr(M_{\theta_1}|\theta_1)$ which is divisible by either 2 or 3. This as well leads to $\DG$ admitting $K_5$ as a subgraph.  

Suppose that $\overline{H}$ is a semidirect product of an elementary abelian group of order $p^m$ with a cyclic group of order $t$, where $t|(p^m-1)$ and $t|(p^f-1)$. $\overline{H} $  is a Frobenius group with Frobenius Kernel $K/N$, an elementary abelian $p$-group. This implies that 2 divides $|M:M_{\theta_1}|$. If $\theta_1$ does not extend to $K$, then the degrees in $\cd(M|\theta_1)$ are divisible by $p$. Thus we must have that $\{p,2,r\}$ form a tringle, in which case $\deg(2)\ge 5$, contradiction. Thus we may assume that $\theta_1$ does not extend to $K$. Let $Q/N$ be the Sylow q-subgroup for some prime $q|t$. Then $Q/N$ is cyclic and so $\theta_1$ extends to $Q$. By \cite[Corollary 11.31]{Isaacs}, we have $\theta_1$ extends to $M_{\theta_1}$. By Gallagher's theorem, $|M_{\theta_1}:K|\theta_1(1)\in \cd(M_{\theta_1}|\theta_1)$ and by Clifford's theorem we must have $|M:M_{\theta_1}||M_{\theta_1}:K|\theta_1(1)\in \cd(M|\theta_1)$. This implies that $\{r\}\cup\pi(p^{2f}-1)$ forms a complete $K_5$, a contradiction.

Suppose that $M_\theta/N\cong \Alt_5$. We claim that $p\in \pi(M:M_{\theta_1})$. Suppose the contrary that $p\notin \pi(M:M_{\theta_1})$. Then it implies that $p\in \pi(\Alt_5)\setminus\{2\}$. In particular, $p=3$ or $5$ Suppose that $p=3$, then since $\PSL_2(3)$ is solvable. Then it is easy to see that $f\neq 1$ in this case. Also, $\pi(\PSL_2(5))$ contains only 3 primes and so in this case $f\neq 1$ as well. Thus $p\in \pi(M:M_{\theta})$.  Now, by the projective degrees in \cite{ATLAS}, we see that $2|M:M_{\theta_i}|\theta_1(1)$ divides some degree in $\cd(M|\theta_1)$. This implies that $\{r,p,2\}$ forms a triangle in which case $\deg(2)\ge 5$, a contradiction.  

Finally, suppose that $\overline{H}\cong \PSL_2(p^m)$ or $\PGL_2(p^m)$. By Lemma~\ref{McVey}, we must have that $\pi(M:M_{\theta_1})$ has nontrivial intersection with each of the three sets $\{p\}, \pi(q\pm 1)$ and $|\pi(M:M_{\theta_1})|p(q-1)\theta(1)$ and $|\pi(M:M_{\theta_1})|p(q+1)\theta(1)$ divides some degrees in $\cd(M|\theta)$ or $6|M:M_\theta|\theta(1)$ divides some degree in $\cd(M|\theta)$. In both cases $\deg(2)$ will exceed 5.


{\bf Case 3: $|\pi(S)|=4$}.

{\bf Subcase 1: $|\pi(q+\epsilon)|=1$ and $|\pi(q-\epsilon)|=3, \epsilon\in \{-1,1\}$}

 We claim that $|\pi(S)|=|\pi(G/N)|$. Suppose on the contrary that $t\in \pi(G/N)\setminus \pi(S)$. It follows that  both $t\sim w$ for each $w\in (q^2- 1)$. This results into a $K_4$. Thus we have that $\DG$ has at least eight vertices. This implies that $\mathcal{C}$ contains not less than three primes. Thus we can find $r,l\in\mathcal{C}$ such that $r\sim l$ in $\Delta(N)$. Let $\theta\in\Irr(N)$ be such that $rl|\theta(1)$. Then by Lemma~\ref{TVtri}, $r, l$ together with two primes in $\pi(S)$  form a $K_4$. So we have two $K_4$'s whose intersection in not empty, a contradiction.  Now we have that $|\pi(S)|=|\pi(G/N)|$. So that $\mathcal{C}$ contains at least three primes and by P\'{a}lfy's condition, there is $x,y\in \mathcal{C}$ such that $x\sim y$. Let $\alpha\in \Irr(N)$ be such that $xy|\alpha(1)$ and observe that by Lemma~\ref{TVtri} $\{x,y,u,v\}$ form a complete subgraph for some primes $u,v\in \pi(S)$. It follows immediately that $|\rho(G)|\ge 8$ and thus $\mathcal{C}$ contains at least four primes and thus must contain atleast two edges. It suffices to assume that $r, l\in \mathcal{C}$ are distinct from $x$ and $y$ and $r\sim l$. Letting $\gamma\in \Irr(N)$. It is not hard to see that $r,l$ will be contained in a $K_4$ and the intersection of two $K_4$'s is nonempty or $p$ is contained in one of the $K_4$'s and so there will be a prime $w\in \pi(S)$ with $\deg(w)\ge 5$, a contradiction.

{\bf Subcase 2: $|\pi(q\pm 1)|=2$}

We claim that $|\pi(S)|=|\pi(G/N)|$. Suppose on the contrary that $t\in \pi(G/N)\setminus \pi(S)$. It follows that  both $t\sim w$ for each $w\in (q^2- 1)$ form complete triangles which results into $\deg(2)=3$. Since $\mathcal{C}$ contains at least two primes, let $r, l\in \mathcal{C}\subseteq \rho(N)$. If we let $\theta_1, \theta_2\in \Irr(N)$ be such that $r|\theta_1(1)$ and $l|\theta_2(1)$, then whether $\theta_1$ and $\theta_2$ extend to $M$ or not, we obtain that $r$ and $l$ are both adjacent to 2 which implies that $\deg(2)\ge 5$, a contradiction.

Since $|\pi(G/N)|=|\pi(S)|=4$, we have that $|\mathcal{C}|\ge 3$. Let $r,l\in \mathcal{C}$ be such that $r\sim l$ and let $\theta\in \Irr(N)$ be such that $rl|\theta(1)$. If $\theta$ is $M$-invariant, then by Remark~\ref{remark2}, we must have that $\theta(1)(q\pm 1)$ divide some degrees in $\cd(M|\theta)$. This results into a subgraph with 5 vertices and 3 complete vertices. This does not occur as a subgraph of any graph in consideration, a contradiction. We must thus assume that $\theta$ is not $M$-invariant. Again, we must have that $M_\theta/N=I_M(\theta)/N$ is one of the subgroups discussed above. If $M_\theta/N$ is one of the abelian subgroups of $S$, then $|M:M_\theta|$ is divisible by at least three primes of $\pi(S)$. This implies that $\DG$ contains a $K_5$. So we must have that $M_\theta/N$ is nonabelian. 

Let $M_\theta/N$ be a Frobenius group. We obtained that $|M:K|\theta(1)\in \cd(M|\theta)$ with $|M:K|$ divisible by all primes in $\pi(q^2-1)$. This also implies that $\DG$ will contain a $K_5$, a contradiction. 

Let $M_\theta/N\cong \Alt_4$ or $\Sym_4$.  This implies that $\pi(M:M_\theta)\ge 2$. By previous argument we obtain that $2|M:M_\theta| \theta(1)$ or $3|M:M_\theta|\theta(1)$ divides some degrees in $\cd(M|\theta)$ and hence some degree in $\cd(G|\theta)$. This will imply that $\DG$ contains a $K_5$.

Suppose that $M_\theta/N\cong \Alt_5$. By the projective degrees in \cite{ATLAS}, we must have that $2\theta(1)$ divides some degree in $\cd(M_\theta|\theta)$, whether $\theta$ extends to $M_\theta$ or not. By arguments in \cite[Proof of Theorem 3.3]{Lewis4}, we obtain that $2p\theta(1)$ divide some degree in $\cd(M|\theta)$. This however implies that $\deg(2)\ge 5$, a contradiction.
 
Finally, if $M_\theta/N\cong \PSL_2(2,p^m)$ or $\PGL_2(p^m)$. By Lemma~\ref{McVey}, we must have that $|M:M_\theta|$ is divisible by at least three primes in $\pi(S)$. In this case, we obtain that $\{r,l\}\cup \pi(M:M_\theta)$ form a $K_5$, a contradiction.
\end{proof}

\begin{lem}\label{4primes}
Assume Hypothesis \ref{hyp}. Let $S\cong \PSL_2(2^f)$ be such that $|\pi(S)|=4$. Then $G$ does not exist.
\end{lem}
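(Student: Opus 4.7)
The plan is to derive a contradiction by forcing $\DG$ to contain a $K_5$ subgraph or a vertex of degree at least $5$. Since $S\cong\PSL_2(2^f)$ with $|\pi(S)|=4$, the graph $\Delta(S)$ is the disjoint union of the isolated vertex $\{2\}$ with two complete components on $\pi(2^f-1)$ and $\pi(2^f+1)$, whose sizes form the unordered pair $\{1,2\}$. I would begin by establishing $\pi(G/N)=\pi(S)$: since $\Out(S)\cong C_f$, at most one prime $t\in\pi(G/N)\setminus\pi(S)$ can occur, and Lemma~\ref{conclusion} applied to a character of $N$ lying under an irreducible character of $G$ with degree divisible by $t$ would force $t$ to be adjacent to every prime in at least one of the two nontrivial components of $\Delta(S)$, yielding either $\deg(t)\geq 5$ or a $K_5$.

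Then $\mathcal{C}:=\rho(G)\setminus\pi(S)\subseteq\rho(N)$ has at least $3$ primes, and P\'{a}lfy's condition applied to the solvable $N$ forces $\mathcal{C}$ to span at least one edge $r\sim l$. Pick $\theta\in\Irr(N)$ with $rl\mid\theta(1)$ and set $T=I_M(\theta)$. If $T<M$, I would invoke Lemma~\ref{conclusion}(B): the index $|M:T|$ is divisible by every prime of $\rho(M/N)\setminus\{2,3\}$, and some degree in $\cd(M|\theta)$ is divisible by $c|M:T|\theta(1)$ for some $c\in\{2,3\}$. Since $|\pi(S)\setminus\{2,3\}|\geq 2$, this character degree has at least five distinct prime divisors drawn from $\{r,l\}\cup\pi(S)$, producing a $K_5$ subgraph of $\DG$ and contradicting the $K_5$-freeness guaranteed by $4$-regularity together with Lemma~\ref{lem:1}.

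If instead $T=M$, then Remark~\ref{remark2} gives that both $\theta(1)(2^f-1)$ and $\theta(1)(2^f+1)$ lie in $\cd(M|\theta)$, so $\{r,l\}\cup\pi(2^f-1)$ and $\{r,l\}\cup\pi(2^f+1)$ are each complete in $\DG$. Writing the two-element component as $\{a,b\}$ and the singleton as $\{c\}$, we already obtain a $K_4$ on $\{r,l,a,b\}$ and a triangle on $\{r,l,c\}$, so $r$ and $l$ attain their full degree budget and admit no further neighbors. I would finish by exhausting the remaining primes of $\mathcal{C}$: each such $s\in\mathcal{C}\setminus\{r,l\}$ must draw its four neighbors from $\pi(S)\cup(\mathcal{C}\setminus\{r,l,s\})$, and casing on $|\mathcal{C}|\in\{3,4,5\}$ together with the extension/induction dichotomy applied to a character of $N$ divisible by $s$ shows that the degree of $a$ or $b$ is always pushed past $4$ or a new $K_5$ appears.

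The main obstacle I expect is this last $M$-invariant case. The non-invariant case is resolved in one stroke by Lemma~\ref{conclusion}(B), whereas ruling out the invariant configurations requires delicate book-keeping of the degrees at the four vertices of $\pi(S)$ together with the constraints imposed by Remark~\ref{remark2} or Lemma~\ref{conclusion} on every additional character witnessing a prime of $\mathcal{C}$.
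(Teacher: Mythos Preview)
Your proposal has two genuine gaps.

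\textbf{First, the reduction $\pi(G/N)=\pi(S)$ fails.} Your argument says: take $t\in\pi(G/N)\setminus\pi(S)$, pick $\chi\in\Irr(G)$ with $t\mid\chi(1)$, and apply Lemma~\ref{conclusion} to an $N$-constituent $\theta$ of $\chi$. But $t\in\pi(G/N)$ means $t$ already divides some degree of $G/N$; the corresponding $\chi$ has $N$ in its kernel, so $\theta=1_N$ and Lemma~\ref{conclusion} says nothing. In fact the case $\pi(G/N)\supsetneq\pi(S)$ genuinely occurs: for $f\in\{5,7\}$ one has $f\notin\pi(S)$ while $f\mid|\Out(S)|$, and the paper devotes its entire Case~2 to this situation. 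There the extra prime $t$ is adjacent (via \cite{WhiteE}) to the three primes of $\pi(2^{2f}-1)$, so $\deg(t)=3$ in $\Delta(G/N)$, not $5$; ruling this out requires a separate analysis of $\mathcal{C}$ and of the possible subgraphs of the two $4$-regular graphs on $7$ vertices.

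\textbf{Second, you cannot invoke only conclusion~(B) of Lemma~\ref{conclusion}.} That lemma is a disjunction over the possible types of $T/N$ in Dickson's list; which of (A)--(D) applies depends on the structure of $T/N$. The relevant alternative here is (A): when $T/N$ is cyclic, dihedral, or the Borel Frobenius group, $|M:T|$ is divisible by the primes in exactly two of the three sets $\{2\},\pi(2^f-1),\pi(2^f+1)$. If those two sets happen to be $\{2\}$ and the singleton component of $\pi(2^{2f}-1)$, then $|\pi(M:T)|=2$ and you obtain only a $K_4$ on $\{r,l,2,s\}$, not a $K_5$. The paper then has to push further: a $K_4$ forces $|\rho(G)|\ge 8$, hence $|\mathcal{C}|\ge 4$, hence a second P\'alfy edge $u\sim v$, and the same dichotomy applied to the new pair produces two $K_4$'s with non-empty intersection or a vertex of degree $\ge 5$.

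Finally, your $M$-invariant case is harder than it needs to be. Since $f\ge 3$ the Schur multiplier of $\PSL_2(2^f)$ is trivial, so an $M$-invariant $\theta$ extends to $M$ and Gallagher gives $2^f\theta(1)\in\cd(M|\theta)$ in addition to $(2^f\pm1)\theta(1)$. Hence $r$ and $l$ are adjacent to \emph{all four} primes of $\pi(S)$ and to each other, so $\deg(r)\ge 5$ immediately---no further bookkeeping on $\mathcal{C}\setminus\{r,l\}$ is required.
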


\begin{proof} We consider cases when $|\pi(G/N)|= |\pi(S)|$ and $|\pi(G/N)|\not= |\pi(S)|$ differently. 

{\bf Case 1: $|\pi(S)|= |\pi(G/N)|$}

We have that $|\mathcal{C}|\ge 3$. Since $\mathcal{C}\subseteq \cd(N)$ and $N$ is solvable, we can use P\'{a}lfy's condition and assume that there is a $r,l\in \mathcal{C}$ such that $r\sim l$ in $\Delta(N)$. Let $\theta\in \Irr(N)$ be such that $rl|\theta(1)$. If $\theta$ is $G$-invariant, then since the Schur multiplier of $S$ is trivial, we have that $\theta$ extends to $G$. Therefore, by Gallagher's Theorem, $\theta(1)(2^f\pm 1), 2^f\theta(1)\in \cd(G)$. In this case, we obtain $\deg(r)$ and $ \deg(l)$ is at least 5, a contradiction. Thus we may assume that $\theta$ is not $G$-invariant. 

Let $T=I_M(\theta)$. Then $T/N$ is one of the Dickson's list of subgroups of $\PSL_2(q)$ in \cite[Hauptsatz II.8.27]{Hp}. Suppose that $T/N$ is an elementary abelian 2-group, then we have that $|\pi(M:T)|\ge 3$, which would result in a $K_5$ as a subgraph of $\DG$. Now, suppose that $T/N$ is a cyclic group or a Frobenius group. Then by Lemma~\ref{conclusion}, $|M:T|$ is divisible by all primes in two of the sets $\{2\}, \pi(2^f-1)$, and $\pi(2^f+1)$. So we may assume that $|G:T|$ is divisible $\{2\}$ and $\pi(2^f+\epsilon), \epsilon\in \{1,-1\}$ in which $|\pi(2^f+\epsilon)|=1$. In this case we obtain that $\DG$ contains a $K_4$ and thus $|\rho(G)|\ge 8$. This also implies that  $|\mathcal{C}|\geq 4$ and hence must span at least 2 edges by P\'{a}lfy's condition. Let $u,v\in \mathcal{C}$ be different from $r, l$. and let $\alpha\in \Irr(N)$ be such that $uv|\alpha(1)$. 
Let $T=I_M(\alpha)=M$. Then we must have that $\alpha$ extends to $M$, in which case we obtain that $\DG$ contains two $K_4$'s whose intersection is nonempty.
We may thus assume that $T<G$. 
Then by Lemma~\ref{conclusion}, we observe  that in all the cases, we must have some prime in $\pi(S)$ with at least degree 5.

{\bf Case 2: $|\pi(S)|<|\pi(G/N)|$}

By \cite{Hupp}, we have that $f=5,7$. In particular $2^f-1$ is a Merssene prime $s$ and $2^f+1$ is a product of powers of two primes $x$ and $y$.  By \cite{ATLAS}, we observe that $\deg(x)=\deg(y)=2$ and $\deg(s)=1$  in $\Delta(G/N)$. We have that $|\mathcal{C}|\ge 2$. 

{\bf Claim: $|\mathcal{C}|\le 2$. } 

To show this, suppose that $|\mathcal{C}|\ge 3$ and let $r,l,a\in \mathcal{C}$ such that $r\sim l$. Let $\theta,\phi\in \Irr(N)$ be, respectively, irreducible constituents of $\chi_N, \vartheta_N$ where $rl|\chi(1), a|\vartheta(1)$. Then $rl|\theta(1)$ and $a|\phi(1)$. Let $M_\theta$ and $M_\phi$ be the stabilizers of $\theta$ and $\phi$ in $M$ respectively. Suppose that $M_\theta=M$, Then we obtain $\deg(r), \deg(l)\geq 5$, a contradiction.

\begin{figure}[htb!]\centering
\begin{tikzpicture}
	\vertex (w) at (18:1) [label=right:$s$]{};
	\vertex (u) at (90:1) [label=above:$2$]{};
	\vertex (v) at (162:1) [label=left:$x$]{};
	\vertex (x) at (234:1) [label=left:$y$]{};
	\vertex (y) at (306:1) [label=right:5/7]{};
	\path 
 		(x) edge (y)
 		(x) edge (v)
 		(y) edge (w)
 		(y) edge (v)
	;
\end{tikzpicture}
\caption{$\Delta(\Aut(\PSL_2(2^f))), f = 5,7$}
\label{fig:Aut}
\end{figure}
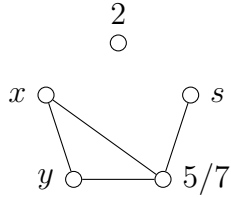 \FloatBarrier
\begin{figure}[htb!]\centering
\begin{tikzpicture}
	\vertex (w) at (90-360/7:1.4) [label=right:$2$] {};
	\vertex (u) at (90:1.4) [label=above:$l$] {};
	\vertex (v) at (90+360/7:1.4) [label=left:$x$] {};
	\vertex (x) at (90+720/7:1.4) [label=left:$y$] {};
	\vertex (y) at (90+1080/7:1.4) [label=left:5/7] {};
	\vertex (a) at (90+1440/7:1.4) [label=right:$s$] {};
	\vertex (b) at (90+1800/7:1.4) [label=right:$r$] {};
	\path 
 		(w) edge (u)
 		(v) edge (x)
 		(y) edge (a)
 		(b) edge (a)
 		(x) edge (y)
 		(b) edge (w)
 		(u) edge (a)
 		(v) edge (y)
 		(a) edge (w)
	;
\end{tikzpicture} 

\caption{Impossible subgraphs}
\label{fig:imp}
\end{figure}
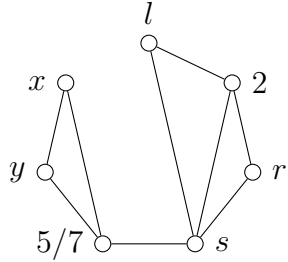\FloatBarrier

Thus we may assume that $M_\theta<M$. Since $|\pi(M:M_\theta)|\ge 3$ implies that $\DG$ contains a $K_5$, we must have that $M_\theta\in \{ D_{2xy},2^f\cdot s\} $  by \cite{Lewis4} and Lemma~\ref{conclusion}.  We may assume that $\pi(M:M_\theta)=\{2,s\}$ since otherwise $|\pi(M:M_\theta)|=3$. This makes $\deg(2)=3$ and $\deg(s)=4$. Now, we consider $M_\phi$. If $M_\phi=M$ then $\deg(s)=5$, a contradiction so we must have that $M_\phi<M$ and by Lemma~\ref{conclusion}, we have $|\pi(M:M_\phi)|=3$ or $\pi(M:M_\phi)=\{2,s\}$ as argued above. Either way we obtain $\deg(b)\ge 5$ for some $b\in \{2,s\}$, a contradiction.

Now we show that $G$ with this property does not exist. Since $|\rho(G)|=7$, then we need to show that $\DG$ does not contain a $K_4$. Let $r,l\in\mathcal{C}$ be such that $r\not\sim l$ and let $\theta, \phi\in \Irr(N)$ be such that $r|\theta(1)$ and $l|\phi(1)$. Let $M_\theta, M_\phi$ be the stabilizers of $\theta, \phi$ in $M$ respectively. Then we observe that $\pi(M:M_\theta)=\pi(M:M_\phi)=\{2,s\}$ by Lemma~\ref{conclusion}, otherwise $\DG$ contains a $K_4$ or an impossible subgraph when one of $\phi, \theta$ is $M$-invariant. In this case we obtain a subgraph isomorphic to Figure~\ref{fig:imp}.

Figure~\ref{fig:imp} is a subgraph of both 4-regular graphs with 7 vertices. Consider Figure~\ref{fig:1}. Observe that the graph is vertex transitive so we need to consider  only one possibility. We may suppose that, (in that order), $$(p_1,p_2,p_3,q_1,q_2,q_3,q_4)=(h, l,s,y,x,r,2), h=5\text{ or }7$$
Observe that $2\sim x$. Let $\chi\in \Irr(G)$ be such that $2x|\chi(1)$. And let $\theta\in \Irr(N)$ be the be such that $[\chi_N,\theta]\neq0$. Since $2\not\sim x$  in $\Delta(G/N)$, it is evident that $\theta(1)\neq 1_N$. Let $M_\theta=I_M(\theta)=M$. Then $\theta$ extends to $M$ and by Gallagher's theorem, we have that $$\chi(1)\in \{\theta(1), 2^f\theta(1), (2^f-1)\theta(1), (2^f+1)\theta(1)  \} $$ 
Observe that if $\chi(1)= \theta(1), \text{ or }2^f\theta(1)$, we have that $x|\theta(1)$ in both cases. But then by $(2^f-1)\theta(1), (2^f+1)\theta(1)$ we must have that $x\sim s$ which is not the case. Thus we may assume that $M_\theta<M$. By Lemma~\ref{conclusion}, we have that $|\pi(M:M_\theta)|\ge 2$ with $$\pi(M:M_\theta)\in \{\{x,y,s\}, \{x,y,2\}, \{s,2\}\}.$$ But $2, x$ do not share any of the above possible neighbors.
%
\end{proof}

\begin{lem}\label{onAS}
Assume Hypothesis \ref{hyp}. Let $M/N=\PSL_2(2^f)$ for some $f\geq 6$. Then 
 if $\pi(G/N)\neq \pi(M/N)$, then $ |\pi(S)|= 5$, and $\Delta(S)$ is not isomorphic to Figure~\ref{fig:p}(a).
\end{lem}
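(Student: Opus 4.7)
The plan is to assume $\pi(G/N) \neq \pi(M/N)$, fix a prime $t \in \pi(G/N) \setminus \pi(S)$, and establish the two conclusions in turn. Since $\Out(\PSL_2(2^f)) \cong C_f$ is generated by field automorphisms, $t$ necessarily divides $f$. Using the character theory of extensions of $\PSL_2(q)$ by a Frobenius automorphism of order $t$ (as in the proof of Lemma~\ref{psl2q}, Case 1, via \cite[Theorem A]{WhiteE}), the non-$t$-invariant principal series and discrete series characters of $\PSL_2(q)$ induce up to characters of $G/N$ of degrees $(q+1)t$ and $(q-1)t$ respectively. Consequently $t$ is adjacent in $\Delta(G/N) \subseteq \DG$ to every prime of $\pi(q^2-1)$. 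Since $\DG$ is 4-regular, $\deg(t) \leq 4$ forces $|\pi(q^2-1)| \leq 4$, so $|\pi(S)| = 1 + |\pi(q^2-1)| \leq 5$. Together with the lower bound $|\pi(S)| \geq 5$ already granted by Lemmas~\ref{simple3primes} and \ref{4primes} for $S \cong \PSL_2(2^f)$, this forces $|\pi(S)| = 5$.

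Next, I would suppose for contradiction that $\Delta(S) \cong$ Figure~\ref{fig:p}(a), writing $\pi(q-1) = \{p_1,p_2\}$ and $\pi(q+1) = \{p_3,p_4\}$ (or the roles reversed), with only the edges $p_1 p_2$ and $p_3 p_4$, and vertex $2$ isolated in $\Delta(S)$. In $\Delta(G/N)$ the vertex $t$ has degree exactly $4$ (its neighbors being $p_1,\ldots,p_4$), each $p_i$ has degree $2$, and $2$ is still isolated. Since $|\pi(G/N)| = 6$ and $7 \leq |\rho(G)| \leq 9$ by Hypothesis~\ref{hyp}, we have $|\mathcal{C}| \in \{1,2,3\}$ where $\mathcal{C} = \rho(G) \setminus \pi(G/N) \subseteq \rho(N)$. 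The $4$-regularity already saturates $t$'s neighborhood, so $t \not\sim 2$ and $t \not\sim r$ for any $r \in \mathcal{C}$ in $\DG$; each $p_i$ can gain at most two further neighbors, and vertex $2$ must gain exactly four. I plan to apply P\'alfy's condition to the solvable subset $\mathcal{C}$ together with Lemma~\ref{TVtri} (valid because $f \geq 6$ rules out $S \cong \Alt_5, \PSL_2(8)$) to each character $\theta \in \Irr(N)$ whose degree is divisible by a prime of $\mathcal{C}$: this produces $\psi \in \Irr(M|\theta)$ whose $\psi(1)/\theta(1)$ is divisible by two primes of $\pi(S)$, yielding a triangle $\{r,q_1,q_2\} \subset \DG$ with $\{q_1,q_2\}$ an edge of $\Delta(S)$, hence necessarily $\{q_1,q_2\} \in \{\{p_1,p_2\},\{p_3,p_4\}\}$. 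Matching the cases $|\mathcal{C}| = 1, 2, 3$ against the admissible $4$-regular graphs on $7, 8, 9$ vertices (Figures~\ref{fig:1}--\ref{fig:4} and \ref{fig:6vert}) will supply the contradiction.

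The main obstacle will be the case $|\mathcal{C}| = 1$: purely combinatorially, a $4$-regular graph on $7$ vertices (isomorphic to Figure~\ref{fig:2}) matches all the forced adjacencies, so the contradiction must come from character theory rather than graph structure. The key observation will be that $\cd(G/N)$ consists only of the degrees $1, q, q\pm 1, (q\pm 1)t$, none of which is divisible by $2 p_i$ for any $p_i \in \pi(q^2-1)$; therefore each adjacency $2 \sim p_i$ required by $\deg_{\DG}(2) = 4$ must arise from a Clifford-induced character over some nontrivial $\theta \in \Irr(N)$. Tracking the triangles produced by Lemma~\ref{TVtri} for such $\theta$, together with the saturated degree of $t$ and the two-edge budget at each $p_i$, will force one of the $p_i$ to accumulate degree greater than $4$, delivering the final contradiction.
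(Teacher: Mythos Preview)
Your first paragraph, establishing $|\pi(S)|=5$, is correct and matches the paper's argument exactly.

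The second half has a genuine gap. Your key claim---that the two primes $q_1,q_2\in\pi(S)$ produced by Lemma~\ref{TVtri} satisfy ``$\{q_1,q_2\}$ an edge of $\Delta(S)$'', hence lie in $\{\{p_1,p_2\},\{p_3,p_4\}\}$---is not what Lemma~\ref{TVtri} asserts. That lemma only says $q_1q_2\mid \psi(1)/\theta(1)$ for some $\psi\in\Irr(M\mid\theta)$; it does not say $q_1q_2$ divides a degree of $S=M/N$. In fact the $M$-invariant case (where $\psi(1)/\theta(1)\in\cd(S)$ would indeed force $\{q_1,q_2\}\subseteq\pi(2^f\pm1)$) is already excluded, since extension plus Gallagher makes $r$ adjacent to all five primes of $\pi(S)$. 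So the surviving case is $I_M(\theta)<M$, and then the two primes dividing $\psi(1)/\theta(1)$ come from $|M:I_M(\theta)|$; one of them can perfectly well be $2$, which is isolated in $\Delta(S)$. Your triangle $\{r,q_1,q_2\}$ therefore need not use an edge of $\Delta(S)$, and the combinatorial bookkeeping you outline (degree budgets at $t$ and the $p_i$, tracing the $2\sim p_i$ adjacencies) does not close.

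The paper bypasses this by invoking the sharper Lemma~\ref{conclusion} (McVey's Lemmas 3.2--3.7) in place of Lemma~\ref{TVtri}. For $S\cong\PSL_2(2^f)$ that lemma guarantees that $|M:I_M(\theta)|$ is divisible by \emph{all} primes in two of the three sets $\{2\},\pi(2^f-1),\pi(2^f+1)$. In the Figure~\ref{fig:p}(a) configuration both $\pi(2^f\pm1)$ have size two, so this yields at least three primes of $\pi(S)$; together with $r$ one obtains a $K_4$. Since the $4$-regular graphs on seven vertices are $K_4$-free, the $|\mathcal{C}|=1$ case falls immediately---no delicate tracking of $2\sim p_i$ is needed. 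For $|\mathcal{C}|=3$ (nine vertices) P\'alfy supplies an edge $r\sim l$ in $\mathcal{C}$, and the same McVey argument now gives a $K_5$, again an instant contradiction. The $|\mathcal{C}|=2$ (eight-vertex) case is disposed of by observing that $t$ together with $\pi(2^{2f}-1)$ already forms a butterfly inside $\DG$, whereas the unique $4$-regular graph on eight vertices containing a $K_4$ (Figure~\ref{fig:3}) admits no butterfly subgraph. Replacing your appeal to Lemma~\ref{TVtri} by Lemma~\ref{conclusion} is the missing ingredient.
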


\begin{proof}
By Lemma~\ref{simple3primes} and Lemma~\ref{4primes}, we have $|\pi(M/N)|\geq 5.$ Since $\pi(G/N)\neq \pi(M/N)$, it follows that $G/N=M/N\langle \alpha\rangle$ for some field automorphism $\alpha$ of $M/N$ of order $t$ with $\pi(t)\not\subseteq \pi(M/N)$. Let $x\in \pi(t)$. It follows by \cite[Theorem A]{WhiteE} that $x$ is adjacent to every prime in $\pi(2^{2f}-1)$. But we know that $|\pi(2^{2f}-1)|\ge 5$ when $|\pi(S)|\ge 6$, a contradiction. Thus $|\pi(S)|=5 $. Now suppose that $\Delta(S)$ is isomorphic to Figure~\ref{fig:p}(a). Then it follows that $x$ together with $\pi(2^{2f}-1)$ forms a butterfly. Thus we must have that $\DG$ has either 7 or 9 vertices. Suppose that $\DG$ has seven vertices. Then we have that $|\mathcal{C}|=1$. Let $r\in \mathcal{C}$ and let $\theta\in \Irr(N)$ be such that $r|\theta(1)$. Then $\theta$ cannot be $M$-invariant since that would imply that $\theta$ extends to $M$ which implies that $r$ is adjacent to all the primes in $\pi(S)$, a contradiction. Let $M_\theta$ be the stabilizer of $\theta$ in $M$. Then by Lemma~\ref{conclusion}, we must have that $|M:M_\theta|$ is divisible by three primes which implies that $\{r,2\}\cup \pi(2^f+\epsilon), \epsilon\in \{-1,1\}$ forms a complete cubic. This contradicts the fact that $\DG$ does not contain a $K_4$. So we may assume that $|\rho(G)|=9$. In this case we have that $|\mathcal{C}|=3$ which in turn implies that there is $r,l\in \mathcal{C}$ with $r\sim l$. If we let $\theta(1)\in \cd(N)$ be the degree affording the edge, then we see that $\theta$ cannot extend to $M$. Thus we obtain that $M_\theta<M$ and by Lemma~\ref{conclusion}, $\pi(M:M_\theta)\cup \{r,l\}$ form a $K_5$, a contradiction.
\end{proof}

\begin{lem}\label{9vert}
Assume Hypothesis~\ref{hyp}. Let $|\rho(G)|=9$. Then $G$ does  not exist.
\end{lem}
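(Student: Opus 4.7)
The proof is by contradiction. Assume $G$ satisfies Hypothesis~\ref{hyp} with $|\rho(G)|=9$. Combining Lemma~\ref{simple3primes}, Lemma~\ref{K4}, Lemma~\ref{lem:others}, Lemma~\ref{psl2q}, Lemma~\ref{4primes}, and Lemma~\ref{onAS}, the only remaining possibility is $S\cong \PSL_2(2^f)$ with $|\pi(S)|\geq 5$. Since $|\rho(G)|=9>7$, \cite[Theorem B]{HungK4} forces $\Delta(G)$ to contain a $K_4$, so $\Delta(G)$ must be isomorphic to one of the two 4-regular graphs on $9$ vertices displayed in Figure~\ref{fig:4}(a) or Figure~\ref{fig:4}(b).

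Recall that $\Delta(\PSL_2(2^f))$ is the disjoint union of $\{2\}$ and the cliques on $\pi(2^f-1)$ and $\pi(2^f+1)$. Set $\mathcal{C}:=\rho(G)\setminus\pi(G/N)$. The plan is to split according to $|\pi(S)|\in\{5,6,7,8,9\}$. In each case, we apply P\'{a}lfy's condition (since $\mathcal{C}\subseteq\rho(N)$ and $N$ is solvable) to force at least one edge in $\mathcal{C}$. For each such edge $\{r,l\}$ we choose $\theta\in\Irr(N)$ with $rl\mid\theta(1)$: if $\theta$ is $M$-invariant, the trivial Schur multiplier of $\PSL_2(2^f)$ together with Gallagher's Theorem make $r$ and $l$ adjacent to every prime in $\pi(S)$, forcing degrees larger than $4$; otherwise Lemma~\ref{conclusion} implies that $|M:I_M(\theta)|$ is divisible by primes from at least two of the three sets $\{2\},\pi(2^f-1),\pi(2^f+1)$, and hence $\{r,l\}$ together with two primes from $\pi(S)$ spans a $K_4$ in $\Delta(G)$.

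The heart of the argument is a combinatorial check against Figures~\ref{fig:4}(a) and~\ref{fig:4}(b). In Figure~\ref{fig:4}(b), as already observed in the proof of Lemma~\ref{lem:2}, every induced $5$-vertex subgraph contains at most one complete vertex; but the two primes of $\pi(S)$ participating in the forced $K_4$ must both be complete in their $5$-vertex subgraph, a contradiction. In Figure~\ref{fig:4}(a) the graph decomposes as two edge-disjoint $K_4$'s joined by a perfect matching; since $\Delta(S)$ is a disjoint union of cliques with $2$ always isolated, the ways of embedding $\Delta(S)$ as an induced subgraph are constrained to a short list, and for each one the forced $K_4$ coming from an edge in $\mathcal{C}$ produces either an additional edge between the components $\pi(2^f-1)$ and $\pi(2^f+1)$ (impossible inside $\Delta(S)$) or a vertex of degree at least $5$.

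The principal obstacle is the subcase $|\pi(S)|=5$ with $\Delta(S)\cong$ Figure~\ref{fig:p}(c), i.e.\ a triangle together with two isolated vertices (one of them being $2$), since this configuration fits naturally inside the two $K_4$'s of Figure~\ref{fig:4}(a). This subcase is resolved by combining Lemma~\ref{onAS}, which pins down $\pi(G/N)$, with the sharper divisibility conditions (A)--(D) of Lemma~\ref{conclusion}: these force $|M:I_M(\theta)|$ to be divisible by primes straddling distinct connected components of $\Delta(S)$, thereby introducing edges between $2$ and primes of $\pi(2^f\pm 1)$ in $\Delta(G)$ and contradicting either the $4$-regularity or the absence of a $K_5$.
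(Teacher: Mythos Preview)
Your overall strategy---reduce to $S\cong\PSL_2(2^f)$ with $|\pi(S)|\ge 5$, then case-split on $|\pi(S)|$ and push characters up from $N$ using the trivial Schur multiplier, Gallagher's Theorem, and Lemma~\ref{conclusion}---is exactly the paper's approach. However, two concrete errors would break the proof as written.

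First, your structural description of Figure~\ref{fig:4}(a) is wrong: it has \emph{nine} vertices, not eight, so it cannot be ``two edge-disjoint $K_4$'s joined by a perfect matching.'' In fact it consists of two vertex-disjoint $K_4$'s together with a ninth vertex adjacent to two vertices of each $K_4$, plus two further cross-edges between the $K_4$'s. This ninth vertex matters: in the paper's treatment of $|\pi(S)|=6$ it is precisely the third prime of $\mathcal{C}$, and the contradiction comes from examining a specific cross-edge (such as $r\sim p_3$ in the paper's relabelling) and showing that the two endpoints cannot acquire a common extra neighbour via Lemma~\ref{conclusion}. Your combinatorial check, as stated, does not reach this.

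Second, your claim that ``in each case we apply P\'alfy's condition to force an edge in $\mathcal{C}$'' fails when $|\pi(S)|\in\{7,8,9\}$, since then $|\mathcal{C}|\le 2$ and P\'alfy's three-prime condition is vacuous. The paper handles these cases differently: for $|\pi(S)|=9$ one has $\mathcal{C}=\emptyset$ and must argue directly from $\Delta(S)=\DG$; for $|\pi(S)|=8$ a single prime $r\in\mathcal{C}$ already yields a $K_5$; for $|\pi(S)|=7$ one takes $r,l\in\mathcal{C}$ separately (not as an edge), applies Lemma~\ref{conclusion} to each, and shows that the resulting pair of $K_4$'s must share the vertex $2$, which is impossible in either graph of Figure~\ref{fig:4}. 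Your sketch does not supply these arguments, and the uniform ``edge-in-$\mathcal{C}$'' mechanism you propose does not cover them.
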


\begin{proof}

{\bf Case 1: $|\pi(S)|= 8$}

Let $r\in \mathcal{C}$ and let $\theta\in\Irr(N)$ be such that $r|\theta(1)$.
This case is easy to see that $r\in \mathcal{C}$ together with four primes in $\pi(S)$ form a $K_5 $ whether $\theta$ extends to $M$ or not, a contradiction. 

{\bf Case 2: $|\pi(S)|= 7$}

By Lemma~\ref{onAS}, we must have that $|\pi(S)|=|\pi(G/N)| $. We have that $|\mathcal{C}|=2$. Now, observe that $3\le |\pi(2^f+\epsilon)\cup \{2\}|\le 5$. Let $\theta, \phi\in \Irr(N)$ be such that $r|\theta(1), l|\phi(1)$ where $\mathcal{C}=\{r,l\}$. Let $M_\theta=I_M(\theta)$ and $ M_\phi=I_M(\phi)$. Since $|\pi(S)|=7$, $\theta, \phi$ cannot extend to $M$ so we may assume that $M_\theta,M_\phi<M$. By Lemma~\ref{conclusion}, we have that $\pi(M:M_\theta)=\pi(M:M_\phi)$ and both contain three elements. This would imply that $\DG$ contains two $K_4$'s which intersect at vertex 2. This does not form a subgraph of any 4-regular graph with nine vertices.

{\bf Case 3: $|\pi(S)|= 6$}

In this case we have that $|\pi(G/N)|=|\pi(S)|$ by Lemma~\ref{onAS}. This implies that $|\mathcal{C}|=3$ and thus spans an edge. Let $\{r,l,h\}=\mathcal{C}$ such that $r\sim l$. Let $\theta, \phi\in \Irr(N)$ be such that $rl|\theta(1), h|\phi(1)$. Let $M_\theta=I_M(\theta)$ and $ M_\phi=I_M(\phi)$. Since $|\pi(S)|=6$, $\theta, \phi$ cannot extend to $M$ so we may assume that $M_\theta,M_\phi<M$. Observe that $|\pi(2^f+\epsilon)\cup\{2\}|\in \{2,3,4,5\}, \epsilon\in \{-1,1\}$. By Lemma~\ref{conclusion}, we have that $\pi(M:M_\theta)=\pi(M:M_\phi)$ and both contain two elements. In this case we obtain that $\DG$ must be isomorphic to Figure~\ref{fig:4}(a) since we obtain two $K_4$'s. Relabel Figure~\ref{fig:4}(a) as in the Figure below:

\begin{figure}[htb!]\centering
\begin{tikzpicture}
	\vertex (w) at (10:1.6) [label=right:$l$]{};
	\vertex (u) at (50:1.6) [label=right:$r$]{};
	\vertex (v) at (90:1.6) [label=above:$s$]{};
	\vertex (x) at (130:1.6) [label=left:$2$]{};
	\vertex (y) at (170:1.6) [label=left:$h$]{};
	\vertex (a) at (210:1.6) [label=left:$p_1$]{};
	\vertex (b) at (250:1.6) [label=left:$p_2$]{};
	\vertex (c) at (290:1.6) [label=below:$p_3$]{};
	\vertex (d) at (330:1.6) [label=right:$p_4$]{};
	\path 
 		(d) edge (a)
 		(d) edge (c)
 		(d) edge (w)
 		(d) edge (b)
 		(w) edge (u)
 		(w) edge (v)
 		(w) edge (x)
 		(u) edge (x)
 		(u) edge (c)
 		(v) edge (y)
 		(b) edge (y)
 		(v) edge (x)
 		(x) edge (y)
 		(a) edge (y)
 		(b) edge (a)
 		(u) edge (v)
 		(b) edge (c)
 		(a) edge (c)
	;
\draw (0,-2) node [below] {(a)} circle (0);
\end{tikzpicture}
\caption{Relabelled graph}
\label{fig:relabel}
\end{figure}
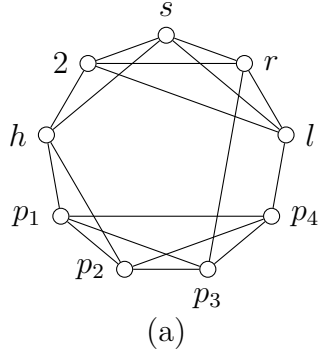\FloatBarrier

Where $\pi(2^f-\epsilon)=\{p_i\}_{i=1}^4$ and $\pi(2^f+\epsilon)=\{s\}$. We observe that $r\sim p_3$ and $l\sim p_4$. By previous arguments we obtain a contradiction since the pairs have no common neighbors. 

{\bf Case 4: $|\pi(S)|=5$}

First suppose that $|\pi(G/N)|>|\pi(S)|$. Then by Lemma~\ref{onAS}, $\Delta(S)$ is isomorphic to Figure~\ref{fig:p}(c). Let $t\in \pi(G/N)\setminus \pi(S)$. Then it follows that $t$ is adjacent to every member of $\pi(2^{2f}-1)$ which obtains a $K_4$. Now, observe that $|\mathcal{C}|=3$. Let $ r,l,h\in \mathcal{C}$ and define $\theta, \phi$ in $\Irr(N)$ and $M_\theta, M_\phi$ as defined in the previous case. Let $|\pi(2^f-\epsilon)|=3$ and $|\pi(2^f+\epsilon)|=1$, $\epsilon\in \{-1,1\}$. Neither of $\theta, \phi$ extends to $M$ and so we must have $|\pi(M:M_\theta)|=2=|\pi(M:M_\phi)$ since otherwise we would have a $K_5$ or two $K_4$'s whose intersection is nonempty. This will as well result in either $\deg(2)$ or $\deg(s)$ having degree at least 5, a contradiction. Thus we may assume that $|\pi(S)|=|\pi(G/N)|$. Now we have that $|\mathcal{C}|=4$ which implies that $\mathcal{C}$ must span at least two edges. In this case we have that $\Delta(S)$ is one of the graphs in Figure~\ref{fig:p}(a) or (c).  Both cases would result in a $K_5$ or two non-disjoint $K_4$'s, a contradiction.

{\bf Case 5: $|\pi(S)|=9$}

 Suppose that $|\pi(S)|=9$. Then $\Delta(S)$ must have two $K_4$'s. So it suffices to consider the case when $\DG$ is isomorphic to Figure~\ref{fig:4}(a). Considering Figure~\ref{fig:relabel} above, we may consider an edge $r\sim p_3$ and obtain a contradiction as in Case 3 above.
\end{proof}

\begin{lem}\label{8vert}
Assume Hypothesis \ref{hyp}. Let $\DG$ be isomorphic to Figure~\ref{fig:3}. Then $\pi(2^f+1)$ and $\pi(2^f-1)$ belong to different $K_4$'s.
\end{lem}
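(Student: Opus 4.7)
The plan is a proof by contradiction. Suppose $A := \pi(2^f-1)$ and $B := \pi(2^f+1)$ both lie in a single $K_4$, call it $K$, with the other $K_4$ being $K'$. From $|A \cup B| \le 4$ I get $|\pi(S)| = 1 + |A| + |B| \le 5$; combined with Lemmas~\ref{simple3primes} and~\ref{4primes} (which rule out $|\pi(S)| \le 4$ under Hypothesis~\ref{hyp}), this forces $|\pi(S)| = 5$, so $A \cup B = K$ and $2 \in K'$. Next I would invoke Lemma~\ref{onAS}: any prime $x \in \pi(G/N)\setminus \pi(S)$ would, via the field automorphism, be joined to every prime of $\pi(2^{2f}-1) \supseteq A\cup B$, overloading $x$; hence $\pi(G/N) = \pi(S)$. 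Therefore $|\mathcal{C}| = |\rho(G)|-|\pi(G/N)| = 3$, and these three primes together with $2$ exactly fill $K'$.

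A crucial structural feature of Figure~\ref{fig:3} is that the two $K_4$'s are joined by a perfect matching, so every vertex of $K'$ has exactly one neighbor in $K$. In particular, no prime in $K'$ can be adjacent to more than one prime of $A\cup B$. Now I would pick $r\in A$ and $l\in B$. Since $K$ is a clique, $r\sim l$ in $\DG$; but $A$ and $B$ lie in distinct components of $\Delta(G/N)=\Delta(\PSL_2(2^f))$, so any $\chi\in \Irr(G)$ with $rl\mid \chi(1)$ must restrict to $N$ nontrivially. Let $\theta\in \Irr(N)$ be an irreducible constituent of $\chi_N$ with $\theta\neq 1_N$, and set $T:=I_M(\theta)$.

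The analysis splits on whether $T=M$ or $T<M$. If $T=M$, the trivial Schur multiplier of $\PSL_2(2^f)$ for $f\ge 3$ combined with Theorem~\ref{TSchur} lets $\theta$ extend to $M$, and Remark~\ref{remark2} then supplies $\psi_\pm\in \Irr(M|\theta)$ with $\theta(1)(2^f\pm 1)\mid \psi_\pm(1)$. Hence both $\pi(\theta(1))\cup A$ and $\pi(\theta(1))\cup B$ are cliques in $\DG$. If $\pi(\theta(1))$ contained any prime $s\in \mathcal{C}\subseteq K'$, then $s$ would be forced adjacent to all four primes of $K$, violating the matching bound; so $\pi(\theta(1))\subseteq \pi(S)$. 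Applying Lemma~\ref{TVtri} to extract two distinct primes of $\pi(S)$ dividing $\chi(1)/\theta(1)$ then stacks a further pair of adjacencies onto either $r$ or $l$, pushing one prime of $\pi(S)$ to degree at least $5$, a contradiction. If instead $T<M$, Lemma~\ref{conclusion}(A) makes $|M:T|$ divisible by all primes of two among the three sets $\{2\}, A, B$; Clifford's Theorem then yields $\psi\in \Irr(M|\theta)$ whose degree is divisible by at least four primes of $\pi(S)$, and together with any prime in $\pi(\theta(1))\cup\{r,l\}$ this creates a $K_5$ or a vertex of degree $\ge 5$, both impossible in Figure~\ref{fig:3}.

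The chief obstacle I anticipate is the subcase $\theta(1)=1$, or more generally $\pi(\theta(1))\subseteq \pi(S)$: here the cliques produced by Remark~\ref{remark2} do not immediately exceed the degree budget of Figure~\ref{fig:3}, so one must exploit the projective-degree tables in \cite{ATLAS} together with Gallagher's Theorem and the explicit list of Dickson subgroups of $\PSL_2(2^f)$ (as in Lemma~\ref{psl2q}) to locate a prime of $\pi(S)$ forced to have five neighbors.
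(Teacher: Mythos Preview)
Your contradiction hypothesis---that $A=\pi(2^f-1)$ and $B=\pi(2^f+1)$ both lie in one $K_4$---is not the full negation of the claim. The statement asserts that $A$ lies in one $K_4$ and $B$ in the other; its negation also includes the possibility that one of $A,B$ is \emph{split} across the two $K_4$'s. For instance, with $|\pi(S)|=5$ and $|A|=|B|=2$, the edge $a_1\sim a_2$ inside $A$ could a priori be a matching edge of Figure~\ref{fig:3}; and with $|\pi(S)|=6$, $|A|=2$, $|B|=3$, one prime of $A$ could sit in each $K_4$. Your argument, which immediately deduces $|A\cup B|\le 4$ and hence $|\pi(S)|=5$, never touches these configurations. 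The paper treats $|\pi(S)|=5$ and $|\pi(S)|=6$ by a direct (not contradiction-based) argument that actually identifies which $K_4$ is which.

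Even restricting to your hypothesis, the choice $r\in A$, $l\in B$ is the source of the obstacle you flag. The paper instead works with primes $r,l\in\mathcal{C}=\rho(G)\setminus\pi(G/N)$. Since $\mathcal{C}\subseteq\rho(N)$ with $|\mathcal{C}|=3$ and $N$ is solvable, P\'alfy's condition gives an edge $r\sim l$ in $\Delta(N)$, so some $\theta\in\Irr(N)$ has $rl\mid\theta(1)$ with $r,l\notin\pi(S)$ from the start---there is no need to pass through a $\chi\in\Irr(G)$ and then confront the case $\pi(\theta(1))\subseteq\pi(S)$. From there, $\theta$ cannot extend to $M$ (else $r$ is joined to all five primes of $\pi(S)$), so $M_\theta<M$, and Lemma~\ref{conclusion} forces $\pi(M:M_\theta)=\{2\}\cup\pi(2^f-\epsilon)$ with $|\pi(2^f-\epsilon)|=1$, every other option producing a $K_5$. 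Hence $\{r,l,2,s\}$ is one $K_4$ (with $s$ the unique prime in $\pi(2^f-\epsilon)$), pinning $\pi(2^f+\epsilon)$ in the other. The same template with a single $r\in\mathcal{C}$ handles $|\pi(S)|=6$. Your attempted use of Lemma~\ref{TVtri} at the sticking point does not give control over the specific $\chi$ you began with, so it cannot close the gap as written.
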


\begin{proof}
It suffices to show that the case holds for $|\pi(S)|=5,6$. 

{\bf Case 1: $|\pi(S)|=5$}

First we suppose that $|\pi(G/N)|>|\pi(S)|$ and let $t\in \pi(G/N)\setminus \pi(S) $. Then we have that $t$ is adjacent to all primes in $\pi(2^{2f}-1)$. Since $\Delta(S)$ is isomorphic to Figure~\ref{fig:p}(c), we have that $\pi(2^f+\epsilon)\cup \{t\}, \epsilon\in \{-1,1\}$ form a $K_4$ where $|\pi(2^f+\epsilon)|=3$. This implies that $\pi(2^f-\epsilon)$ is contained in the other $K_4$. Now suppose that $|\pi(S)|=|\pi(G/N)|$. Then $|\mathcal{C}|=3$ and thus there is $r,l\in \mathcal{C}$ such that $r\sim l$ in $\Delta(N)$. Let $\theta\in \Irr(N)$ be such that $rl|\theta(1)$. Since $|\pi(S)|= 5$, we have that $\theta$ does not extend to $M$. Let $M_\theta=I_M(\theta).$ By Lemma~\ref{conclusion}, we must have that $\pi(M:M_\theta)=\pi(2^f-\epsilon)\cup\{2\}$ where $|\pi(2^f-\epsilon)|=1$. This implies that $\{r,l,2,h\}, h=\pi(2^f-\epsilon)$ form a $K_4$. This implies that $\pi(2^f+\epsilon)$ belongs to the other $K_4$ of $\DG$.

{\bf Case 2: $|\pi(S)|=6$}

In this case we must have that $|\pi(2^f+\epsilon)|=3,4$ and $|\pi(2^f-\epsilon)|=2,1$ in that order. It suffices to consider the case when $|\pi(2^f+\epsilon)|=3$ and $|\pi(2^f-\epsilon)|=2$. Let $r\in \mathcal{C}$ and let $\theta\in \Irr(N)$ be such that $r|\theta(1)$. Then again we observe that $\theta$ does not extend to $M$ and thus $M_\theta=I_M(\theta)<M$. By Lemma~\ref{conclusion}, we must have that $\pi(M:M_\theta)=\pi(2^f-\epsilon)\cup \{2\}$ since otherwise we obtain a $K_5$. Thus we have that $\{r,2\}\cup \pi(2^f-\epsilon)$ forms a $K_4$. This implies that $\pi(2^f+\epsilon)$ is in the other $K_4$.
\end{proof}

\begin{rem}
\label{pis6}
In the case $|\pi(S)|=6$, with $|\pi(2^f+\epsilon)|=3$ and $|\pi(2^f-\epsilon)|=2$ as described in the proof of Lemma~\ref{8vert} above, we can show that this case does not occur. We have obtained that  $\pi(2^f-\epsilon)$ and $\pi(2^f+\epsilon)$ are in different $K_4$'s. Moreover, in the proof, we observe that $\pi(2^f-\epsilon)\cup \{r,2\}$ forms one of the $K_4$'s in $\DG$. Which implies that the remaining vertex which is a prime in $\mathcal{C}$ is in the same $K_4$ as $\pi(2^f+\epsilon)$. Let $l$  be the remaining vertex and let $\alpha(1)\in \cd(N)$ be such that $l|\alpha(1)$. Let $M_\alpha=I_M(\alpha)$. Observe that $\alpha$ cannot extend to $M$ and thus $M_\alpha<M$.  By Lemma~\ref{conclusion} we must have that $\pi(M:M_\alpha)\in \{\{\pi(2^f+\epsilon)\cup \pi(2^f-\epsilon)\}, \{\pi(2^f+\epsilon)\cup \{2\}\}\}$. Either way $\DG$ will contain a $K_5$.
\end{rem}

\begin{lem}\label{8vertfinal}
Assume Hypothesis \ref{hyp}. Let $\DG$ be isomorphic to Figure~\ref{fig:3}. Then $G$ does not exist.
\end{lem}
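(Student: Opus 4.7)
The plan is, using the constraints established in Lemma~\ref{8vert} and the elimination from Remark~\ref{pis6}, to systematically rule out every remaining value of $|\pi(S)|$ and every admissible split of $|\pi(2^f-1)|+|\pi(2^f+1)|$. By the cumulative work of Lemmas~\ref{simple3primes}, \ref{lem:others}, \ref{4primes}, and \ref{psl2q}, we may assume $S \cong \PSL_2(2^f)$ for some $f \geq 3$ with $5 \leq |\pi(S)| \leq 8$. By Lemma~\ref{8vert}, $\pi(2^f-1)$ and $\pi(2^f+1)$ lie in different $K_4$'s of Figure~\ref{fig:3}; since each is a clique in $\DG$ and no clique of Figure~\ref{fig:3} can straddle the matching in a nontrivial way, we have $|\pi(2^f\pm 1)| \leq 4$, yielding only finitely many splits to examine.

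First, I would dispose of the large cases $|\pi(S)| = 7, 8$ by a counting argument. In these cases one $K_4$ is essentially saturated by $\pi(2^f-\epsilon)$ (three or four primes) while the other holds $\pi(2^f+\epsilon)$, leaving at most one or two slots for $\{2\} \cup \mathcal{C}$. Since the vertex $2$ must have degree four in $\DG$ but is isolated in $\Delta(S)$, its four edges must be realized via characters $\chi \in \Irr(G)$ lying over some nontrivial $\theta \in \Irr(N)$. Applying Lemma~\ref{conclusion}(A), the index $|M:M_\theta|$ is divisible by all primes in two of the sets $\{2\}, \pi(2^f-1), \pi(2^f+1)$; together with $\theta(1)$ this produces a clique in $\DG$ of size at least five, contradicting the $K_5$-freeness of Figure~\ref{fig:3}.

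Next, I would handle $|\pi(S)| = 6$ with splits $(1,4)$ or $(4,1)$ (the splits $(2,3)$ and $(3,2)$ are removed by Remark~\ref{pis6}). Here one $K_4$ is entirely filled by $\pi(2^f-\epsilon)$, and the other must contain $\pi(2^f+\epsilon)$ (one prime), the prime $2$, and two further primes from $\pi(G/N)\setminus \pi(S)$ or $\mathcal{C}$. Using $|\pi(\Out(S))\setminus \pi(S)| \leq 1$, we have $|\mathcal{C}| \geq 1$, so some $\mathcal{C}$-prime sits in the same $K_4$ as $2$. Applying Lemma~\ref{conclusion} to the affording character of $N$ forces $|M:M_\theta|$ to bring in primes of $\pi(2^f-\epsilon)$, producing a $K_5$ straddling both $K_4$'s. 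Finally, for $|\pi(S)| = 5$ we combine Lemma~\ref{onAS} (when $f \geq 6$) with Lemma~\ref{8vert} to reduce to $\Delta(S) \cong$ Figure~\ref{fig:p}(c) with $|\pi(G/N)|=|\pi(S)|$, so $|\mathcal{C}|=3$; P\'alfy's condition yields an edge within $\mathcal{C}$, and Lemma~\ref{conclusion} applied to its affording character again forces either a clique of size five or a vertex of degree exceeding four. The small exceptional $f$'s can be inspected directly via the \textup{ATLAS}.

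The main obstacle is the combinatorial bookkeeping in the mid-sized cases $|\pi(S)| = 5, 6$, where multiple $\mathcal{C}$-primes can sit in either $K_4$ and one must verify consistency of the single matching edge emanating from each vertex while applying Lemma~\ref{conclusion} to multiple distinct characters. In every subcase the mechanism is the same: the Dickson-type stabilizer analysis forces $|M:M_\theta|$ to involve primes from both $\pi(2^f-1)$ and $\pi(2^f+1)$ (or from $\{2\}$ and one of them), and this creates either a $K_5$ or a vertex of degree at least five, neither of which is compatible with the $4$-regular graph of Figure~\ref{fig:3}. This exhausts all cases and shows that $G$ cannot exist.
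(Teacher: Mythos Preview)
Your proposal takes a genuinely different route from the paper. You organize the argument as an exhaustive case split on $|\pi(S)|\in\{5,6,7,8\}$ and on the partition of $|\pi(S)|-1$ into $|\pi(2^f-1)|+|\pi(2^f+1)|$, eliminating each subcase separately via P\'alfy's condition and Lemma~\ref{conclusion}. The paper instead gives one short argument that covers all cases simultaneously: since Lemma~\ref{8vert} puts $\pi(2^f-\epsilon)$ and $\pi(2^f+\epsilon)$ in different $K_4$'s of Figure~\ref{fig:3}, the perfect matching between the two $K_4$'s carries some $p_i\in\pi(2^f-\epsilon)$ to some $q_\ell\in\pi(2^f+\epsilon)$, giving an edge $p_i\sim q_\ell$ that lies in no triangle of $\DG$ and with $p_i,q_\ell\neq 2$. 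Analyzing a single $\chi\in\Irr(G)$ with $\pi(\chi(1))=\{p_i,q_\ell\}$ and its constituent $\upxi\in\Irr(N)$ finishes the proof: if $M_\upxi<M$, Lemma~\ref{conclusion}(A) forces $\pi(M:M_\upxi)$ to contain either $2$ or at least three primes, incompatible with $\pi(\chi(1))=\{p_i,q_\ell\}$; if $M_\upxi=M$, Gallagher gives $q_\ell\mid\upxi(1)$, and then $(2^f\pm 1)\upxi(1)\in\cd(M)$ makes $q_\ell$ adjacent to every prime of $\pi(2^{2f}-1)$. This entirely bypasses the bookkeeping you flag as the main obstacle.

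Your sketch also has a genuine gap. In the $|\pi(S)|=6$ case with split $(4,1)$ you assert that Lemma~\ref{conclusion} applied to a $\mathcal C$-prime $r$ in the second $K_4$ ``forces $|M:M_\theta|$ to bring in primes of $\pi(2^f-\epsilon)$, producing a $K_5$''. But Lemma~\ref{conclusion}(A) equally allows the two chosen sets to be $\{2\}$ and $\pi(2^f+\epsilon)$, so that $\pi(M:M_\theta)=\{2,s\}$ consists only of vertices already adjacent to $r$ inside its own $K_4$; no contradiction arises, and to make progress you would still have to analyze the matching edge leaving $r$---which is exactly the paper's mechanism. A similar issue affects your $|\pi(S)|=5$ reduction: Lemma~\ref{onAS} constrains $\Delta(S)$ only under the hypothesis $\pi(G/N)\neq\pi(S)$, so it does not by itself yield $|\pi(G/N)|=|\pi(S)|$ as you state, and in the subcase $\Delta(S)\cong$ Figure~\ref{fig:p}(c) with an outer-automorphism prime present one has $|\mathcal C|=2$, so P\'alfy's condition on $\mathcal C$ is unavailable.
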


\begin{proof}
Let $\pi(2^f-\epsilon)=\{p_i\}_{\forall i}$ and let $\pi(2^f+\epsilon)=\{q_\ell\}_{\forall \ell}$. By Lemma~\ref{8vert}, there is a $p_i$ and a $q_\ell$ in $\DG$ such that $p_i\sim q_\ell$ and that $\{p_i, q_\ell\} $ does not belong to any triangle in $\DG$. Let $\chi\in \Irr(G)$ be such that $p_iq_\ell|\chi(1)$. Let $\upxi\in \Irr(N)$ be an irreducible constituent of $\chi_N$. Since $p_i\not\sim q_\ell$ in $\Delta(S)$, we must have that $\upxi\neq 1_N$. We observe that $\pi(\chi(1))=\{p_i,q_\ell\}$. Let $M_\upxi=I_M(\upxi)$. Suppose that $\upxi$ is not $M$-invariant. Then we must have that $|M:M_\upxi|$ is divisible by all primes in two of the sets $\{2\}, \pi(2^f-\epsilon), \pi(2^f+\epsilon)$. This will obtain that $\pi(M:M_\upxi)$ contains three primes, or contains 2. Since $p_i, q_\ell$ are chosen in such a way that none equals to 2, we obtain a contradiction. Thus we may assume that $M_\upxi=M$. Since the Schur multiplier of $M/N$ is trivial, we have that $\upxi$ extends to $M$. By Gallagher's theorem, we have that $$\chi(1)\in \{\upxi(1), 2^f\upxi(1), (2^f-\epsilon)\upxi(1), (2^f+\epsilon)\upxi(1)  \} $$We deduce that $\chi(1)=\upxi(1)$ or $(2^f-\epsilon)\upxi(1)$ in which case we obtain that $q_\ell|\upxi(1)$. But  $(2^f+\epsilon)\upxi(1)$ and $(2^f-\epsilon)\upxi(1)$ which implies that $q_\ell$ is connected to all the primes in both $\pi(2^f+\epsilon)$ and $\pi(2^f-\epsilon)$, a contradiction.

\end{proof}

\begin{lem}\label{7vert}
Assume Hypothesis \ref{hyp}. Let $\DG$ be 4-regular with 7 vertices. Then $G$ does not exist. 
\end{lem}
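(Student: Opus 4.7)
The plan is to cascade the earlier lemmas to pin down $S$, exploit the $K_4$-freeness of the two 4-regular graphs of order 7, and then apply the McVey-type Lemma~\ref{conclusion} together with Clifford/Gallagher theory to derive a contradiction in each surviving case. First I would observe that both graphs in Figures~\ref{fig:1} and~\ref{fig:2} are $K_4$-free, so any subgraph of $\DG$ of size four must miss an edge. Combining Lemmas~\ref{simple3primes}, \ref{lem:others}, \ref{psl2q} and~\ref{4primes}, the only remaining possibility is $S\cong \PSL_2(2^f)$ with $|\pi(S)|\geq 5$. Since $\Delta(S)$ is the disjoint union of $\{2\}$ with the two cliques $\pi(2^f-1)$ and $\pi(2^f+1)$, the $K_4$-freeness of $\DG$ forces each of these cliques to have size at most three, so $|\pi(S)|\in\{5,6,7\}$.

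Next I would dispose of $|\pi(S)|\geq 6$. For $|\pi(S)|=7$, one of $\pi(2^f\pm 1)$ contains four primes, producing a $K_4$ inside $\Delta(S)\subseteq \DG$. For $|\pi(S)|=6$, Lemma~\ref{onAS} gives $\pi(G/N)=\pi(S)$ and hence $|\mathcal{C}|=1$. Let $r\in\mathcal{C}$ and $\theta\in \Irr(N)$ with $r\mid \theta(1)$. Since the Schur multiplier of $\PSL_2(2^f)$ is trivial, an $M$-invariant $\theta$ would extend to $M$ and make $r$ adjacent to every prime of $\pi(S)$, giving $\deg(r)\geq 5$. Hence $M_\theta<M$, and Lemma~\ref{conclusion}(A) forces $|M:M_\theta|$ to be divisible by all primes in two of $\{2\},\pi(2^f-1),\pi(2^f+1)$. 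A routine count using $|\pi(2^f\pm 1)|\in\{2,3\}$ (the $(1,4)$ split is already excluded by $K_4$-freeness) shows that $\pi(M:M_\theta)\cup\{r\}$ always contains at least four primes forming a clique, a contradiction.

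For the remaining case $|\pi(S)|=5$, Lemma~\ref{onAS} together with $K_4$-freeness first rules out $|\pi(G/N)|>|\pi(S)|$: a new field-automorphism prime $t\in\pi(G/N)\setminus\pi(S)$ would, by \cite[Theorem A]{WhiteE}, be adjacent to every prime in $\pi(2^{2f}-1)$, and with a triangle of $\Delta(S)$ this gives $K_4$. Thus $\pi(G/N)=\pi(S)$ and $|\mathcal{C}|=2$; write $\mathcal{C}=\{r,l\}$. If $r\sim l$, take $\theta\in\Irr(N)$ with $rl\mid \theta(1)$ and apply Lemma~\ref{TVtri}; since $|\pi(S)|=5$ excludes the $\Alt_5$ and $\PSL_2(8)$ exceptions, some $\psi\in\Irr(M\mid\theta)$ has $\psi(1)/\theta(1)$ divisible by two distinct primes of $\pi(S)$, giving $\{r,l,p,q\}\cong K_4$, contradiction. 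Hence $r\not\sim l$, and one works with $\theta_r,\theta_l\in\Irr(N)$ separately, both necessarily not $M$-invariant by the Schur-multiplier/Gallagher argument. By Lemma~\ref{conclusion}(A), each $\pi(M:M_{\theta_\bullet})\cup\{\bullet\}$ is a clique of size at least three in $\DG$; matching these against the two possible shapes of $\Delta(S)$ (Figure~\ref{fig:p}(a) with a $(2,2)$ split, or Figure~\ref{fig:p}(c) with a $(1,3)$ split) and against the rigid adjacency patterns of Figures~\ref{fig:1} and~\ref{fig:2}, one obtains in each configuration either an embedded $K_4$ or a vertex of degree exceeding four.

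The main obstacle is this final subcase for $|\pi(S)|=5$: the constraints from Lemma~\ref{conclusion} leave several potential adjacency patterns for the primes $r,l\in\mathcal{C}$, and one must verify that every way of embedding the two disjoint edges (respectively, triangle plus two isolated vertices) of $\Delta(S)$ together with the forced cliques containing $r$ and $l$ into a 4-regular 7-vertex $K_4$-free graph fails. The combinatorial bookkeeping is tight because both target graphs contain many triangles, so one must argue carefully that the forced clique containing $r$ (and the one containing $l$) always either overlaps with an existing triangle of $\Delta(S)$ to form $K_4$, or pushes the degree of $2$ or of a prime in $\pi(2^f\pm 1)$ above four.
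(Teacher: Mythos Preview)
Your reduction to $S\cong\PSL_2(2^f)$ with $|\pi(S)|\in\{5,6,7\}$ is correct and matches the paper's route, but there is a genuine error in how you dispose of $|\pi(S)|=7$. You assert that ``one of $\pi(2^f\pm 1)$ contains four primes,'' yet this contradicts your own preceding sentence: $K_4$-freeness only forces each clique to have size at most three, so when $|\pi(S)|=7$ the split $|\pi(2^f-1)|=|\pi(2^f+1)|=3$ is exactly what survives. In that case $\Delta(S)$ consists of two disjoint triangles and the isolated vertex $2$, and no size count rules it out. The paper handles this by choosing an edge $p_i\sim q_j$ of $\DG$ joining the two triangles (such an edge exists in any $4$-regular graph on seven vertices), noting that $p_i\not\sim q_j$ in $\Delta(G/N)$, passing to a nontrivial constituent $\varphi\in\Irr(N)$ of a character affording that edge, and then showing via Gallagher and Lemma~\ref{conclusion} that either $\varphi$ extends to $M$ (forcing $p_iq_j\mid\varphi(1)$ and hence impossible extra adjacencies) or $|M:I_M(\varphi)|$ is divisible by at least three primes of $\pi(S)$, yielding a $K_4$.

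A second issue is that in the $|\pi(S)|=5$ case you invoke only alternative (A) of Lemma~\ref{conclusion}, whereas that lemma is a disjunction (A)--(D); the other cases must be checked (they are quick but not automatic). More seriously, in the $(1,3)$ split (Figure~\ref{fig:p}(c)) the surviving possibility under (A) is $\pi(M:M_{\theta_r})=\pi(M:M_{\theta_l})=\{2,s\}$ with $s=2^f-\epsilon$, which gives only triangles $\{r,2,s\}$ and $\{l,2,s\}$, not a $K_4$. Your sketch stops at ``matching against the rigid adjacency patterns,'' but this is where the real work lies: the paper explicitly places $\{r,l\}$, $\{2,s\}$, and the triangle $\pi(2^f+\epsilon)$ into the labelled vertices of Figures~\ref{fig:1} and~\ref{fig:2}, then selects a further edge of $\DG$ joining $r$ (or $l$) to a vertex of $\pi(2^f+\epsilon)$ and repeats the Clifford/Gallagher analysis on its constituent in $\Irr(N)$. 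It is this second pass, not the first, that produces the contradiction, and it is precisely the step you flag as the obstacle but do not carry out.
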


\begin{proof}
By Lemmas~\ref{4primes}, \ref{psl2q} and Lemma~\ref{lem:others} we must have that $S\cong \PSL_2(2^f)$ and $5\leq |\pi(S)|\le 7$. 

{\bf Case 1: $|\pi(G/N)|=7$}

Suppose that $|\pi(S)|=\pi(G/N)=7$
Since $\DG$ does not contain a $K_4$, we have that $\Delta(S)$ is two disconnected triangles and an isolated vertex. We choose two disconnected triangles in Figure~\ref{fig:1}. Label them $\pi(2^f-1)=\{p_1,p_2,p_3\}$ and $\pi(2_f+1)=\{q_1,q_2,q_3\}$ so that $2=q_4$. The graph is vertex transitive and therefore whichever triangles we pick we obtain the same conditions. Observe that $p_1\sim q_1$ and this is not the case in $\Delta(G/N)$ whether $M/N<G/N$ or not. In fact if $t\in \pi(G:M)$ then we obtain that $\deg(t)\ge 5$. So we may assume that $M/N=G/N$. Let $\chi\in \Irr(G)$ be such that $q_1p_1|\chi(1)$ and let $\varphi\in \Irr(N)$ be such that $[\chi_N,\varphi]\neq 0$. We observe that $\varphi$ is nontrivial. Suppose that $I=I_G(\varphi)=G$, then we have that $\varphi$ is extendible to a $\varphi_0\in \Irr(G)$. In this case we obtain that $$\chi(1)\in \{\varphi_0(1)=\varphi(1), 2^f\varphi(1), (2^f-1)\varphi(1), (2^f+1)\varphi(1)   \}=\cd(G|\varphi).$$
In whichever case we obtain that $q_1p_1|\varphi(1)$. It is easy to observe from the degrees in $\cd(G|\varphi)$ that this case cannot occur. Therefore we we must have that $I<G$. By Lemma~\ref{McVey} and Lemma~\ref{conclusion}, we have that $|G:I|$ is divisible by at least three primes. A case which would result into a $K_4$ as a subgraph. Whichever two triangles we choose in Figure~\ref{fig:2}, we obtain the same conclusion.

{\bf Case 2: $|\pi(G/N)|=|\pi(S)|=6$}

In this case we obtain that $\Delta(S)$ is diconnected with one triangle, a path with two vertices and an isolated vertex. Let $r\in \mathcal{C}\subseteq\rho(N)\setminus\pi(G/N)$ and let $\lambda\in \Irr(N)$ be such that $r|\lambda(1)$. Let $I=I_M(\lambda)$. Then it follows that $I<M$, since otherwise $\lambda$ extends to $M$ and $\deg(r)=6$. Now, by Lemma~\ref{McVey} and Lemma~\ref{conclusion}, we obtain that $\DG$ contains a $K_4$, a contradiction.

Now we may suppose that $|\pi(S)|=5$ and $|\pi(G/N)|=7$. We skipped the case when $|\pi(S)|=6$ since it will obtain a $K_4$. So now we have  $t,w\in \pi(G/N)\setminus\pi(S)$. By \cite[Theorem A]{WhiteE}, we have that $w\sim t$. Since both $w$ and $t$ are adjacent to all the four primes in $\pi(2^{2f}-1)$, we obtain that $\deg(w), \deg(t)\ge 5$, a contradiction.

{\bf Case 3: $|\pi(G/N)|=6>|\pi(S)|=5$}

By Lemma~\ref{onAS}, we have that $\Delta(S)$ is not isomorphic to Figure~\ref{fig:p}(a). Suppose that $\Delta(S)$ is isomorphic to Figure~\ref{fig:p}(c), then by \cite[Theorem A]{WhiteE} we have that $\Delta(G/N)$ contains a $K_4$, a contradiction.

{\bf Case 4: $|\pi(G/N)|=|\pi(S)|=5$}

If $\Delta(S)$ is isomorphic to Figure~\ref{fig:p}(c). Let $\mathcal{C}=\{r,l\}$ we may assume that $r\not\sim l$ since otherwise we obtain a $K_4$, contradiction. Let $\lambda_1, \lambda_2\in \Irr(N)$ be such that $r|\lambda_1(1)$ and $l|\lambda_2(1)$. We must obtain that $|M:I_M(\lambda_1)|$ and $|M:I_M(\lambda_2)|$ are both divisible by only two primes $\{s,2\}$ where $s=(2^f-\epsilon)$, $\epsilon\in \{-1,1\}$.  Othewise $\DG$ would contains a $K_4$. In Figure~\ref{fig:2}, choose any two nonadjacent vertices and label them $r$ and $l$. There is only way to choose them so that the remaining vertices span a triangle. Let $\{r,l\}=\{p_2,q_4\}$  and let $\pi(2^f+\epsilon)=\{x,y,z\}=\{q_1,q_2,q_3\}$. We must have that $|M:I_M(\lambda_1)|=\{s,2\}=\{p_1,p_3\}=|M:I_M(\lambda_2)|$. Observe that $p_2\sim q_2$ in $\DG$. Let $\chi\in \Irr(G)$ be such that $p_2q_2|\chi(1)$ and let $\varphi\in \Irr(N)$ be such that $[\chi_N,\varphi]\neq 0$. Then $\varphi\neq 1_N$. Suppose that $\varphi$ extends to $G$, then we must have that $$\chi(1)\in \{\varphi(1), 2^f\varphi(1), (2^f-1)\varphi(1), (2^f+1)\varphi(1)  \}.$$
We observe that $p_2|\varphi(1)$. This implies that $\deg(p_2)\ge 5$ $(2^f-1)\varphi(1), (2^f+1)\varphi(1)\in \cd(M|\varphi)$, a contradiction. So we may suppose that $I=I_M(\varphi)<M$. By Lemmas~\ref{McVey} and \ref{conclusion}, we may assume that $|M:I|$ is divisible by two primes $\{2,s\}=\{p_3,p_1\}$ which implies that $p_2q_2|\varphi(1)$ and thus $\{p_2,p_1,p_3,q_2\}$ form  a $K_4$.

Suppose that $\DG$ is isomorphic to Figure~\ref{fig:1} and choose vertices $r$ and $l$ such that they do not span an edge such that the remaining vertices span a triangle. The graph is vertex transitive so we choose only one combination. Let $\pi(2^f+\epsilon)=\{x,y,z\}=\{q_1,q_2,q_3\}, \pi(2^f-\epsilon)=\{s\}=\{p_2\}$ and $2=p_3$, we are left with $\{r,l\}=\{p_1,q_4\}$. By previous argument we obtain a contradiction.

Now, we may suppose that $\Delta(S)$ is isomorphic to Figure~\ref{fig:p}(a). We may assume that $M/N<G/N$. Let $r,l\in \mathcal{C}$. Then we must have that $r$ and $l$ do not span an edge. Let $\gamma_1, \gamma_2\in \pi(N)$  be such that $r|\gamma_1(1)$ and $l|\gamma_2(1)$. Let $T_1=I_M(\gamma_1)$ and $T_2=I_M(\gamma_2)$. Suppose that $T_1=M$, then we have that $\gamma_1$ is extendible to an irreducible character $\gamma_{1_0}\in \Irr(M)$. This inplies that $r$ is adjacent to all the primes in $\pi(S)$ by Gallagher's Theorem. This however means that $\deg(r)\ge 5$, a contradiction. Thus we may assume that $T_1<M$. By Lemmas~\ref{McVey} and \ref{conclusion}, we have that $\DG$ contains a $K_4$, a contradiction. 
\end{proof}

\begin{lem}\textup{\cite[Lemma 3.1]{McVey}}\label{SchurM}
Lemma 3.1. Let $N\unlhd G$ be groups with $G/N \cong \PSL_2(q)$ for some prime-power $q\ge5$.
Suppose the character $\theta \in \Irr(N)$ is $G$-invariant but does not extend to $G$. Then, $q$ is odd and  one of the following holds:\begin{enumerate}
\item[(i)] $\cd(G|\theta) = {(q - 1)\theta(1), (q + 1)\theta(1), (q - \epsilon)/2}$ where $\epsilon = (-1)^{(q-1)/2}$,
\item[(ii)] $q = 9$ and $6\theta(1), 15\theta(1) \in \cd(G|\theta)$.
\end{enumerate}

\begin{hyp}\label{hyp2}
$G$ is a nonsovable group whose prime graph is 4-regular with 6 vertices. Let $N\unlhd G$ be the solvable radical. $G/N$ is almost simple with socle $M/N\cong S$ a nonabelian simple group. 
\end{hyp}

\begin{lem}\label{4primes2}
Assume Hypothesis \ref{hyp2}. Let $S\not\cong \PSL_2(2^f)$, $f>3$ and $|\pi(S)|=4$. Then $G$ does not exist. 
\end{lem}

\begin{proof}
It follows by \cite{LewisCo} that $\Delta(S)$ is connected or disconnected with two connected components. Suppose that $\Delta(S)$ is disconnected with two connected components. Then $S\cong \PSL_2(q)$ where $q$ is a power  of an odd prime $p$. It follows that $|\pi(q^2- 1)|=3$. $\Delta(S)$ contains a triangle if and only if $|\pi(q+\epsilon)|=3$, for some $\epsilon\in \{-1,1\}$. Otherwise $\Delta(S)$ contains a path and an isolated vertex. By choice of any subgraph of $\DG$ isomorphic to $\Delta(S)$ we obtain that $\mathcal{C}$ contains two adjacent vertices, say $r,l$. Let $\chi(1)\in\cd(G)$ be such that $rl|\chi(1)$. Let $\gamma\in \Irr(N)$ be an irreducible constituent of $\chi_N$. Then we have that $\gamma\neq 1_N$. Since $r,l\not\in \pi(G/N)$, we see that $rl|\gamma(1)$. By Lemma~\ref{TVtri} $\varphi(1)/\gamma(1)$ is divisible by two distinct primes in $\pi(S)$ for some $\varphi\in \Irr(T|\gamma),$ where $ T=I_M(\gamma)$ or $\gamma$ is extendible to $M$ in which case $S\cong \PSL_2(8)$ or $\Alt_5$. The former implies that $\DG$ must contain a $K_4$, a contradiction. The latter does not occur. Now suppose that $|\pi(S)|<|\pi(G/N)|$, then by \cite{Hupp, ATLAS}, we must have that $|\pi(G/N)|=5$ and $\Delta(G/N)$ contains as a subgraph a square with one diagonal or a $K_4$ in the case where $\Delta(S)$ contains a triangle \cite[Theorem A]{WhiteE}. We may assume that the former occurs and in this case we must have that $|\pi(q\pm 1)|=2$. Let $r\in \rho(N)\setminus \pi(G/N)$ and let $\varphi\in \Irr(N)$ be such that $r|\varphi(1)$. Let $T_\varphi =I_M(\varphi)<M$. By Lemma~\ref{conclusion} we have that $|M:T_\varphi|$ is divisible by at least three primes or case (D) occurs. In this case it suffices to assume that $\{t\}=\pi(M:T_\varphi)$. By conclusion of case (D) we obtain that $\{t,r,2,3\}$ form a $K_4$ or $\{t,3,r\}, \{t,r,2\}$ and $\{t,r,5\}$ all form triangles. Which obtains a graph which is not a subgraph of Figure~\ref{fig:6vert}. So we may suppose that $T_\varphi=M$. By Gallagher's Theorem we obtain that $r$ is adjacent to all the primes in $\pi(S)$ or to all primes in $\pi(S)$ but $p$ by Lemma~\ref{SchurM}. Suppose that $\pi(q-1)=\{2,b\}$ and $\pi(q+1)=\{2,d\}$. we obtain that $b\sim p$ and $d\sim p$ in $\DG$. Let $\chi_1, \chi_2\in \Irr(G)$ be such that $bp|\chi_1(1)$ and $dp|\chi_2(1)$. Since $b\not\sim p$ and $d\not\sim p$ in $\Delta(G/N)$, we have that $\chi_1, \chi_2\neq 1_N$. Let $\varphi_1, \varphi_2$ be such that $[\chi_{1_N}, \varphi_1]\neq 0$ and $[\chi_{2_N},\varphi_2]\neq 0$. Let $T_1=I_M(\varphi_1)$ and $T_2=I_M(\varphi_2) $. Suppose that $T_1=M$. Then we must have that $$\chi_1(1)=\{\varphi_1(1), q\varphi_1(1), (q-1)\varphi_1(1), (q+1)\varphi, (q+\epsilon)\varphi_1(1)/2 \}$$
or $$\chi_1(1)=\{\varphi_1(1), (q-1)\varphi_1(1), (q+1)\varphi, (q-\epsilon)\varphi_1(1)/2 \}$$where $\epsilon=(-1)^{(q-1)/2}$. In both cases we obtain that $b$ is adjacent to all primes in $\pi(S)$. Similarly $d$ is also  adjacent to all primes in $\pi(S)$. This cannot occur. So we may assume that $T_1<M$. By Lemma~\ref{conclusion}, we have that $|M:T_1|$ is divisible by atleast three primes of $\pi(S)$. So we obtain that $b$ is contained in some triangle with two other primes in $\pi(S)$. This obtains that the degree of one of the primes in $\pi(S)$ is 5, a contradiction.

Suppose that $\Delta(S)$ is connected, then by \cite{Hupp, ATLAS, White}, we must have that $$S\in  \{\Alt_8\cong \PSL_4(2), \PSL_3(4), M_{11}, {}^2B_2(8), {}^2B_2(32)\}$$
It follows by \cite{ATLAS}, $|\pi(S)|$ is always equal to $|\pi(G/N)|$ except the case $S\cong {}^2B_2(8)$ in which case we must have $S=G/N$ since otherwise $\DG$ would have a $K_4$. Now, by structure of $\Delta(S)$ we will obtain that $\mathcal{C}$ contains two adjacent vertices. By Lemma~\ref{TVtri}, we obtain a contradiction.

\end{proof}

\end{lem}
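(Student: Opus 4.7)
My plan is to split on whether $\Delta(S)$ is connected. Throughout I use that the only 4-regular graph on 6 vertices is the octahedron $K_{2,2,2}$ of Figure~\ref{fig:6vert}, which is $K_4$-free and has the property that any choice of 4 vertices whose complementary pair is not one of the three non-edge pairs spans an induced $K_4$ minus an edge. Since $|\pi(S)|=4$ and $|\pi(\Out(S))\setminus \pi(S)|\le 1$ for the simple groups $S$ in play (by \cite{Hupp,ATLAS}), we have $|\mathcal{C}|\in\{1,2\}$ where $\mathcal{C}=\rho(G)\setminus\pi(G/N)$. The main technical lever is Lemma~\ref{TVtri}: an edge in $\Delta(N)$ between two primes $r,l\notin \pi(G/N)$ lifts either to a $K_4$ in $\DG$ on $\{r,l,u,v\}$ with $u,v\in \pi(S)$ (immediately contradicting $K_4$-freeness of $\DG$), or to $\gamma\in\Irr(N)$ extending to $M$ with $S\cong\Alt_5$ or $\PSL_2(8)$ (both excluded since each has only three prime divisors).

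If $\Delta(S)$ is disconnected, \cite{LewisCo} gives $S\cong\PSL_2(q)$ with $q$ a power of an odd prime $p$, and $\Delta(S)$ consists of an isolated $\{p\}$ together with a triangle or length-$2$ path on $\pi(q^2-1)$. I would verify that the embedding of $\pi(S)$ into the octahedron forces $\mathcal{C}$ to be an edge pair of $\DG$: in the triangle subcase, avoiding a $K_4$ on $\pi(S)$ forces $p$ to share its non-edge pair with one of the triangle vertices; in the path subcase, the non-edge $\{r,s\}$ between the end-primes of the path (with $2$ in the middle) must realize a non-edge pair of $\DG$. Having located adjacent $r,l\in \mathcal{C}$, I pick $\chi\in\Irr(G)$ with $rl\mid \chi(1)$, take an irreducible constituent $\gamma$ of $\chi_N$ (with $rl\mid \gamma(1)$ since $r,l\notin \pi(G/N)$), and apply the Lemma~\ref{TVtri} lift to obtain the forbidden $K_4$. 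For the subcase $|\pi(S)|<|\pi(G/N)|$ I invoke \cite[Theorem A]{WhiteE} to see that the extra outer-automorphism prime $t$ is adjacent to every prime of $\pi(q^2-1)$; then for some $r\in\mathcal{C}$ afforded by $\varphi\in\Irr(N)$, a case analysis of $T_\varphi=I_M(\varphi)$ via Lemma~\ref{SchurM} (when $T_\varphi=M$) or Lemma~\ref{conclusion} combined with Dickson's list \cite[Hauptsatz II.8.27]{Hp} (when $T_\varphi<M$) forces either a $K_4$ or a vertex of $\DG$ of degree at least $5$.

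If $\Delta(S)$ is connected, by \cite{Hupp,ATLAS,White} the only possibilities are $S\in\{\Alt_8\cong \PSL_4(2),\PSL_3(4), M_{11}, {}^2B_2(8), {}^2B_2(32)\}$; ATLAS inspection yields $|\pi(G/N)|=|\pi(S)|$ in each case (for $S\cong {}^2B_2(8)$ we must further force $G/N=M/N$, since the field automorphism of order $3$ would make $\{3,5,7,13\}$ span a $K_4$ by \cite{ATLAS}). In each case the non-complete structure of $\Delta(S)$ again leaves $\mathcal{C}$ with two adjacent primes, and the same Lemma~\ref{TVtri} argument produces a $K_4$ in $\DG$, a contradiction. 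The main obstacle I anticipate is the disconnected sub-case $|\pi(q-1)|=|\pi(q+1)|=2$ with $|\pi(S)|<|\pi(G/N)|$, where $\Delta(G/N)$ is merely $K_4$-minus-an-edge and the purely combinatorial constraints do not immediately yield a contradiction; here one has to run carefully through the full Dickson list of possibilities for $T_\varphi/N$ and combine the projective-character alternatives of Lemma~\ref{SchurM} with the index and degree estimates from Lemma~\ref{conclusion} to rule out each shape.
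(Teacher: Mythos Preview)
Your plan matches the paper's proof almost step for step: the same connected/disconnected split on $\Delta(S)$, the same use of Lemma~\ref{TVtri} to manufacture a $K_4$ from an edge in $\mathcal{C}$, the same invocation of \cite[Theorem~A]{WhiteE} and then Lemma~\ref{SchurM} versus Lemma~\ref{conclusion}/Dickson for the $|\pi(G/N)|=5$ subcase, and the identical list $\{\Alt_8,\PSL_3(4),M_{11},{}^2B_2(8),{}^2B_2(32)\}$ in the connected case.

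One small point: in the path subcase with $|\pi(G/N)|=4$, your reason for why $\mathcal{C}$ must be an edge pair (``the non-edge between the end-primes of the path must realize a non-edge pair of $\DG$'') is not actually forced --- nothing prevents those two end-primes from being adjacent in $\DG$, since $\Delta(S)$ is only required to be a subgraph, not an induced subgraph, of $\DG$. The paper simply asserts at the corresponding point that ``$\mathcal{C}$ contains two adjacent vertices'' without giving a separate argument for the path shape, so you are no worse off than the paper here; but if you want to make this step airtight you should argue directly (e.g.\ if $\mathcal{C}$ were a non-edge pair then every vertex of $\pi(S)$ is adjacent to both elements of $\mathcal{C}$, and one can then chase an edge such as $p\sim r$ through Lemma~\ref{TVtri} or Remark~\ref{remark2} to reach a contradiction).
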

\begin{lem}\label{6vert}
Assume Hypothesis \ref{hyp2}. Let $|\pi(S)|\neq 3$. Then $G$ does not exist.
\end{lem}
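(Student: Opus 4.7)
The plan is to case on $|\pi(S)|\in\{4,5,6\}$, exploiting throughout that the unique 4-regular graph on six vertices is the octahedron $K_{2,2,2}$ of Figure~\ref{fig:6vert}, which is $K_4$-free, contains exactly eight triangles, and has every induced subgraph on five vertices equal to $K_5$ minus two independent edges.

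First I would dispose of $|\pi(S)|=6$. Here $\pi(G/N)=\pi(S)=\rho(G)$, so $\Delta(S)$ is a spanning subgraph of the octahedron and must be $K_4$-free. A direct inspection of Huppert's classification~\cite{Hupp} of simple groups with six prime divisors, combined with White's determination~\cite{White} of $\Delta(S)$, rules out every candidate: for $S\cong \PSL_2(q)$ one of the complete components $\pi(q\pm 1)$ has size at least four, and for each of the remaining simple groups $\Delta(S)$ contains $K_4$ directly from the known character degrees.

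Second, for $|\pi(S)|=5$, Lemma~\ref{lem:pent} restricts $S$ to $\PSL_2(2^f)$ with $f\ge 3$ or $\PSL_2(q)$ for an odd prime power $q$, with $\Delta(S)$ one of the three graphs in Figure~\ref{fig:p}. Since $|\rho(G)|=6$, the set $\mathcal{C}:=\rho(G)\setminus \pi(G/N)$ has size at most one. If $\mathcal{C}=\{r\}$ and $r\mid\theta(1)$ for some $\theta\in \Irr(N)$, then either $\theta$ extends to $M$, whereupon Gallagher's theorem makes $r$ adjacent to every prime of $\pi(S)$, so that $\deg(r)\ge 5$, or $M_\theta<M$ and Lemmas~\ref{McVey}, \ref{conclusion} supply at least two additional primes of $\pi(S)$ dividing $|M:M_\theta|$ and adjacent to $r$; together with the edge already present in $\Delta(S)$ this yields a $K_4$. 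The alternative $\mathcal{C}=\emptyset$ forces $|\pi(G/N)|=6>|\pi(S)|$, and Lemma~\ref{onAS} together with \cite[Theorem A]{WhiteE} then produces a $K_4$ in $\Delta(G/N)$. Either alternative contradicts the $K_4$-free octahedron.

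Finally, for $|\pi(S)|=4$, Lemma~\ref{4primes2} reduces us to $S\cong \PSL_2(2^f)$ with $f\ge 4$. Then $\Delta(S)$ is the disjoint union of the isolated vertex $2$ with the complete components $\pi(2^f-1)$ and $\pi(2^f+1)$. Now $|\mathcal{C}|\ge 2$, so P\'alfy's condition places an edge $r\sim l$ in $\mathcal{C}\subseteq \rho(N)$. Choose $\theta\in \Irr(N)$ with $rl\mid\theta(1)$. Lemma~\ref{conclusion} then either extends $\theta$ to $M$ (forcing $r$ and $l$ each to be adjacent to every prime of $\pi(S)$, violating 4-regularity) or provides $|\pi(M:M_\theta)|\ge 2$ with primes drawn from two of $\{2\},\pi(2^f-1),\pi(2^f+1)$, so that $\{r,l\}$ together with those two primes of $\pi(S)$ form a $K_4$; this once more contradicts the octahedron being $K_4$-free. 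The principal obstacle I expect is the $|\pi(S)|=5$ case with $\Delta(S)$ of the disconnected forms in Figure~\ref{fig:p}(a) or (c), where the paucity of edges in $\Delta(S)$ admits several a~priori embeddings of $\DG$ into the octahedron and a careful application of the Clifford correspondence with the Dickson subgroup list~\cite[Hauptsatz II.8.27]{Hp} is required to exclude each possibility.
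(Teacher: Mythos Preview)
Your proposal has genuine gaps in two of the three cases.

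\textbf{The case $|\pi(S)|=4$.} Your key step is: ``Now $|\mathcal{C}|\ge 2$, so P\'alfy's condition places an edge $r\sim l$ in $\mathcal{C}$.'' But P\'alfy's condition concerns three primes, not two; with $|\pi(G/N)|=4$ you only have $|\mathcal{C}|=2$, and nothing forces $r\sim l$. The paper's proof (Case~1) explicitly begins ``We may assume that $r\not\sim l$'' and then carries out a substantial analysis: with $\lambda_i\in\Irr(N)$ lying under $r$ and $l$ separately, it treats the possibilities $T_i=M$ and $T_i<M$ via Lemma~\ref{conclusion}, builds the subgraph of Figure~\ref{fig:case4}(a), and then forces a contradiction by examining an edge $2\sim x$ that is absent from $\Delta(G/N)$. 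None of this is recoverable from your two-line argument.

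\textbf{The case $|\pi(S)|=6$.} Your claim that for $S\cong\PSL_2(q)$ with $q$ odd ``one of the complete components $\pi(q\pm 1)$ has size at least four'' is false: with $|\pi(S)|=6$ one may have $|\pi(q-1)|=|\pi(q+1)|=3$ (sharing only the prime $2$), and then $\Delta(S)$ is $K_4$-free. The paper treats exactly this configuration in Case~3 by picking $\chi\in\Irr(G)$ with $pt\mid\chi(1)$ for some $t\in\pi(q\pm1)\setminus\{2\}$, analysing $M_\theta$ via Lemma~\ref{conclusion} and Lemma~\ref{SchurM}, and only then deriving a contradiction. You also omit $S\cong J_1$ (which has $|\pi(S)|=6$ and $K_4$-free $\Delta(S)$) and $S\cong\PSL_2(2^f)$ with $|\pi(S)|=6$; both require the separate Clifford-theoretic arguments the paper supplies.

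A smaller point: in your $|\pi(S)|=5$ case, invoking Lemma~\ref{lem:pent} presupposes that $\Delta(S)$ is a subgraph of the house or the butterfly, whereas the induced $5$-vertex subgraph of the octahedron is $K_5$ minus two independent edges (eight edges), which is neither. The correct reduction is simply that $\Delta(S)$ is $K_4$-free, and the classification in Lemma~\ref{lem0} then restricts $S$ as you want; but your subsequent ``this yields a $K_4$'' when $M_\theta<M$ and $\pi(M:M_\theta)=\{2,s\}$ is again too quick, since $2$ and $s$ need not be adjacent --- the paper's Case~2 handles this by a further step tracking the two remaining neighbours of $r$.
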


\begin{proof}

{\bf Case 1: $|\pi(G/N)|=4$ with $|\pi(S)|=4$}

In this case, it suffices to consider the case when $S\cong \PSL_2(2^f)$ by Lemma~\ref{4primes2}. It follows that $\Delta(S)$ is disconnected with two isolated vertices, say $\{2,s\}$ and a path with two vertices, say $\{x,y\}$ and that $\mathcal{C}$ contains two primes, say $\{r,l\}$. We may assume that $r\not\sim l$ since otherwise by Lemma~\ref{TVtri}, we must have that $\DG$ contains a $K_4$, a contradiction. Let $\lambda_1,\lambda_2\in \Irr(N)$ be such that $r|\lambda_1(1)$ and $l|\lambda_2(1)$. Let $T_1=I_M(\lambda_1)$ and $T_2=I_M(\lambda_2)$. Suppose that $T_1=T_2=M$. Then we have that $\lambda_1$ and $\lambda_2$ both extend to $M$ by Theorem~\ref{TSchur}. By Gallagher's Theorem we obtain that $$\cd(M|\lambda_i)=\{\psi_i(1), 2^f\psi_i(1),(2^f-1)\psi_i(1), (2^f+1)\psi_i(1) \}, i\in \{1,2\}.$$This obtains a graph isomorphic to Figure~\ref{fig:case4}(a). 

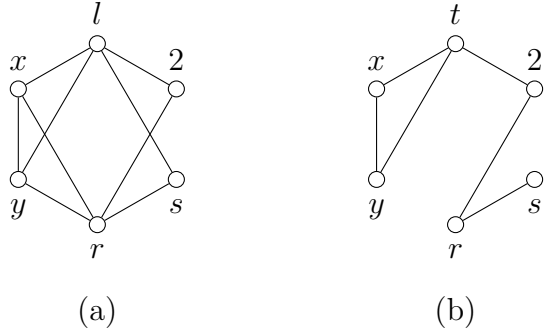
\begin{figure}[htb!]\centering
\begin{tikzpicture}
	\vertex (f) at (30:1.2)[label=above:$2$]{};
	\vertex (e) at (90:1.2)[label=above:$l$]{};
	\vertex (d) at (150:1.2)[label=above:$x$]{};
	\vertex (c) at (210:1.2)[label=below:$y$]{};
	\vertex (b) at (270:1.2)[label=below:$r$]{};
	\vertex (a) at (330:1.2)[label=below:$s$]{};
	\path 
		(b) edge (c)
		(b) edge (a)
		(b) edge (d)
		(b) edge (f)
		(c) edge (d)
		(c) edge (e)
		(d) edge (e)
		(e) edge (f)
		(e) edge (a)
	;
	\draw (0,-2) node[below]{(a)} circle (0);
\end{tikzpicture}\hspace{2cm}
\begin{tikzpicture}
	\vertex (f) at (30:1.2)[label=above:$2$]{};
	\vertex (e) at (90:1.2)[label=above:$t$]{};
	\vertex (d) at (150:1.2)[label=above:$x$]{};
	\vertex (c) at (210:1.2)[label=below:$y$]{};
	\vertex (b) at (270:1.2)[label=below:$r$]{};
	\vertex (a) at (330:1.2)[label=below:$s$]{};
	\path 
		(b) edge (a)
		(b) edge (f)
		(c) edge (d)
		(c) edge (e)
		(d) edge (e)
		(e) edge (f)
	;
	\draw (0,-2) node[below]{(b)} circle (0);
\end{tikzpicture}
\caption{Possible subgraphs of 4-regular graph of order 6}
\label{fig:case4}
\end{figure}\FloatBarrier
We observe that $2\not\sim x$ in the subgraph above but $2\sim x$ in $\DG$ so we let $\gamma\in \Irr(G)$ be such that $2x|\gamma(1)$ and let $\zeta\in \Irr(N)$ be such that $[\gamma_N,\zeta]\neq 0$. Then we must have that $\zeta\neq 1_N$ since $2\not\sim x$ in $\Delta(G/N)$.  Suppose that $\zeta$ is $M$-invariant. Then it follows that $\zeta$ extends to $M$, in which case we have $$\gamma(1)\in \{\zeta(1), 2^f\zeta(1), (2^f-1)\zeta(1), (2^f+1)\zeta(1)  \}.$$ In this case we must have that $x|\zeta(1)$. By the fact that $2^f\zeta(1), (2^f-1)\zeta(1), (2^f+1)\zeta(1)\in \cd(M|\zeta)$, we have that $\{x,s\}$ and $\{x,2\}$ are edges and thus $\deg(x)=5$, a contradiction. Thus we may assume that $\zeta$ is not $M$-invariant. In this case we have by Lemma~\ref{conclusion} that $|M:I_M(\zeta)|$ is divisible by 2 and $s$ or there is a degree in $\cd(M|\zeta)$ divisible by three primes in $\pi(S)$, a contradiction. However, if the former occurs then we obtain that $\{2,s,x\}$ is a triangle implying $\deg(x)=5$, a contradiction. Now we may suppose that one of $T_1, T_2$ is properly contained in $M$. Without loss of generality,  let $T_1<M$. Then by Lemma~\ref{conclusion}, we have that $|M:T_1|$ is divisible by only two primes $2$ and $s$, or we obtain a $K_4$. But by assumption we must have that $r\sim x$ and $r\sim y$. This will not occur so now we may let $\chi\in \Irr(G)$  such that $rx|\chi(1)$. Let $\theta\in \Irr(N)$ be such that $[\chi_N,\theta]\neq 0$. Then we have that $\theta$ is nontrivial. Suppose that $I_M(\theta)=M$. Then we have that $$\chi\in \{\theta(1), 2^f\theta(1), (2^f-1)\theta(1), (2^f+1)\theta  \}.$$
Either way we obtain that $xr|\theta(1)$. This however obtains triangles $\{r,x,y\}$, $\{r,x,s\},\{r,x,2\}$, a contradiction. Thus we must have that $I_M(\theta)<M$. We may thus use Lemma~\ref{conclusion} to obtain that $\DG$ contains a $K_4$, a contradiction.

{\bf Case 2: $|\pi(G/N)|=5$}

Suppose that $|\pi(G/N)|=5$ and $S\cong \PSL_2(2^f)$ with $|S|=5$. Assume further that $\Delta(S)$ does not contain a triangle. Then $\Delta(S)$ is isomorphic to Figure~\ref{fig:p}(a).  Let $\mathcal{C}=\{r\}$. Let $\zeta\in \Irr(N)$ be such that $r|\zeta(1)$. Let $I=I_M(\zeta)$.  If $I=M$, then we have that $\zeta$ extends to $M$ and hence obtain that $r$ is adjacent to all primes in $\pi(S)$. This implies that $\deg(r)=5$, a contradiction. Now, suppose that $I<M$. By Lemma~\ref{conclusion} and Lemma~\ref{McVey}, we have that $|M:I|$ is divisible by at least three primes in $\pi(S)$ or case (D) occurs. In all the cases we  obtain  a $K_4$, a contradiction. 

Now assume that $\Delta(S)$ is isomorphic to Figure~\ref{fig:p}(c). and let $\zeta$ and $I$ be as defined. Suppose that $I<M$. Then again by Lemma~\ref{conclusion}, we must have that $|M:I|$ is divisible by only two primes 2 and $s=2^f+\epsilon, \epsilon\in \{-1,1\}$ (This conclusion is obtained due to the fact that the Frobenius group is of order $2^f(2^f-\epsilon)$ and if $(2^f+\epsilon)$ is composite, then we will have that $2|M:I|$ divides some degree in $\cd(I|\zeta)$ by Ito-Michler's Theorem). By \cite[Hauptsatz II.8.27]{Hp} we have that $I/N$ is a Frobenius group, a cyclic group of order $(2^f-\epsilon)$ or a dihedral group of order $2(2^f-\epsilon)$. But $r$ is connected to two more primes in $\pi(S)$. This will imply that the two more vertices adjacent to $r$ are in $\rho(N)$ and are indeed connected in $\Delta(N)$. Let $\pi(2^f-\epsilon)=\{d,b\}$ be the two primes in $\pi(S)$ adjacent to $r$ and let $\lambda_1, \lambda_2\in \Irr(N)$ be such that $rb|\lambda_1(1)$ and $rd|\lambda_2(1)$. Let $T_{\lambda_1}=I_M(\lambda_1)$ and $T_{\lambda_2}=I_M(\lambda_2)$. Suppose that $T_{\lambda_1}=M$, Then by Gallagher's theorem we have that $r$ is adjacent to all the primes in $\pi(S)$ implying that $\deg(r)=5$. So we must have $T_{\lambda_1}<M$. By Lemma~\ref{conclusion}, we have that $\{b,2,r,s\}$ forms a $K_4$ or $r$ is adjacent to all primes in $\pi(2^f-\epsilon)$ obtaining a $K_4$, a contradiction. 

Now suppose that $|\pi(S)|=4$ and $|\pi(G/N)|=5$. By \cite{Hupp, GAP}, we obtain that if $2\neq t\in\pi(G/N)\setminus\pi(S)$, then $\{t,x,y\}$ and $\{t,x\}$ are the only cliques in $\Delta(G/N)$. This means that 2 is still an isolated vertex.  Let $r\in \rho(N)\setminus \pi(G/N)\subseteq \rho(N)$ and let $\nu\in \Irr(N)$ be such that $r|\nu(1)$. Suppose that $I_M(\nu)=M$, then we obtain a subgraph isomorphic to Figure~\ref{fig:case4}(a) and following the arguments on case 1 we obtain a contradiction. Thus we must have that $I_M(\nu)<M$. In this case we must have that $|M:I_M(\nu)|$ satisfies case (A) and thus we must have that $\pi(M:I_M(\nu))=\{2,s\}$ obtaining a subgraph isomorphic to Figure~\ref{fig:case4}(b). Like in case 1 we see that $r\not\sim x$ and as argued we obtain a contradiction. 

Suppose that $S\cong \PSL_2(q), q>7$ a power of an odd prime $p, |\pi(S)|=5$ and $|\pi(S)|=|\pi(G/N)|$. We must have that $\Delta(S)$ is disconnected with two connected components one of which is an isolated vertex and the other is a 4-vertex graph with one triangle. This case we must have that $|\pi(q\pm 1)|\in \{2,3\}$. Let $r\in \mathcal{C}=\rho(G)\setminus \pi(G/N)\subseteq \rho(N)$ and let $\varphi\in \Irr(N)$. Let $T=I_M(\varphi)=M$. We must have that $\varphi$ extends to $M$ and $r$ is adjacent to every prime of $\pi(S)$ or the hypothesis of Lemma~\ref{SchurM} is satisfied. If the former occurs, we obtain a contradiction since $\deg(r)= 5$. If the latter occurs, then by Lemma~\ref{SchurM}, $(q-1)\varphi(1), (q+1)\varphi(1)\in\cd(M|\theta)$. Since one of $\pi(q\pm 1)$ is a triangle, we obtain a $K_4$, a contradiction. So we may assume that $|M:T|>1$. By Lemma~\ref{conclusion} we must have that $|M:T|$ is divisible by at least three primes, or case (D) occurs. In both cases, we obtain a $K_4$, a contradiction. The case when $|\pi(S)|<|\pi(G/N)| $ and $|\pi(S)|=4$ has been handled in Lemma~\ref{4primes2}.

{\bf Case 3: $|\pi(G/N)|=6$}

In this case we must have that $M/N\cong J_1$ or $\PSL_2(q), q$ a power of a prime $p$. Suppose that $S\cong J_1$. Observe that $M/N=G/N$ by \cite{ATLAS}. Observe that $11\sim 5$ in $\DG$ but $11\not\sim 5$ in $\Delta(G/N)$. Let $\chi\in \Irr(G)$ be such that $5\cdot 11|\chi(1)$. Let $\theta\in \Irr(N)$ be an irreducible constituent of $\chi_N$. We observe that $\theta\neq 1_N$. Let $G_\theta=I_G(\theta)$. Suppose that $G_\theta=G$. Since the Schur multiplier of $J_1$ is trivial, it follows that $\theta$ extends to $\theta_0$ in $\Irr(G)$. Thus we have $$\chi(1)\in \{\theta_0(1)=\theta(1), 56\theta(1), 76\theta(1), 77\theta(1), 120\theta(1), 133\theta(1), 209\theta(1)  \}.$$ In whichever case we obtain that $5\cdot 11|\theta (1)$. In this case we obtain that both 5 and 11 are adjacent to all other primes in $\rho(G)$. Thus we must have that $G_\theta<G$. This implies that $G_\theta/N$ is contained in one of the maximal subgroups $H/N$ of $J_1$. We obtain that $|G:H|$ is divisible in all cases except when $H/N\cong 2^3:7:3$ a nonabelian group. We may suppose that $G_\theta/N\cong H/N$ in this case. Since it is nonabelian, then there is a degree in $\cd(G_\theta|\theta)$ divisble by either $2\theta(1), 3\theta(1) $ or $7\theta(1)$. In either case we obtain that there is a degree in $\cd(G|\theta)$ divisible by $5\cdot 11\cdot 19\cdot r\theta(1)$, $r\in \{2,3,7\}$. This is not however possible since we obtain a $K_4$.

Now suppose that $S\cong \PSL_2(q)$ for an odd prime $p$. Then $\Delta(S)$ is disconnected with two connected component. Since $\DG$ does not have a $K_4$, it follows that $|\pi(q\pm 1)|=3$. We also see that $|G:M|$ is a power of 2 since otherwise, we would obtain that the connected component of $\Delta(S)$ with 5 vertices contains at least two complete vertices. Which is not possible in construction of $\DG$. Thus we may assume that $\Delta(G/N)$ is isomorphic to $\Delta(S)$. Let $\chi\in \Irr(G)$ be such that $pt|\chi(1)$ for some $t\in \pi(q\pm 1)\setminus \{2\}$. Let $\theta$ be an irreducible constituent of $\chi_N$. Since $p\not \sim t$ in $\Delta(G/N)$, it follows that $\theta\not=1_N$. Let $M_\theta$ be the stabilizer of $\theta$ in $M$. Suppose that $M_\theta<M$.  By Lemma~\ref{conclusion}, we have that $|M:M_\theta|$ is divisible by all primes in two of the sets $\{p\}, \pi(q-1), \pi(q+1)$. In either case we obtain that $|M:M_\theta|$ is divisible by at least 3 primes in which case we obtain a $K_4$, a contradiction thus we must have that $M_\theta=M$. If $\theta$ is extendible to some $\theta_0$ in $\Irr(M)$, then we have that $$\chi(1)\in \{\theta_0(1)=\theta(1), (q-1)\theta(1), (q+1)\theta(1), (q+\epsilon)\theta(1)/2 \}$$ where $\epsilon=(-1)^{(q-1)/2}$.

If $\chi(1)=\theta(1)$ or any other we obtain that $p|\theta(1), t|\theta(1)$  or $tp|\theta(1)$. Since $(q-1)\theta(1), (q+1)\theta(1)\in \cd(M|\theta)$ and  we must have that $2\sim p$ or $t$ is connected to all primes in $\pi(q^2-1)$ which would imply that $\deg(2)\ge 5$ or $\DG$ contains a $K_4$, a contradiction. Thus $\theta$ is does not extend to $M$. By Lemma~\ref{SchurM} $$\chi(1)=\{\theta_0(1)=\theta(1), (q-1)\theta(1), (q+1)\theta(1), (q-\epsilon)\theta(1)/2 \}.$$
A similar argument obtains a contradiction.

We now suppose that $S\cong \PSL_2(2^f)$. Then we have that $|\pi(S)|=5,6$. Let $|\pi(S)|=6$. Then we must have that $\Delta(S)$ contains a triangle. Let $|\pi(2^f+\epsilon)|=3$ and $|\pi(2^f-\epsilon)|=2$, $\epsilon\in \{-1,1\}$. Observe that if $t\in \pi(G:M)$, then $t\in \pi(2^f+\epsilon)$. Otherwise, we obtain that $\Delta(G/N)$ will contain a $K_4$. Also observe that $2\not\in \pi(G:M)$ since $\deg(2)\geq 5$ by \cite{WhiteE}. So we note that $\Delta(G/N)$ has an isolated vertex 2. Then in the construction of $\DG$, we should not that 2 is adjacent to all but one prime in the set $\pi(2^{2f}-1)$. Let $l\in \pi(2^{2f}-1)$ be such that $2\sim l$. Let $\chi\in \Irr(G)$ be such that $2l|\chi(1)$, and let $\nu\in \Irr(N)$ be the irreducible constituent of $\chi_N$. It is not hard to note that $\nu\neq 1_N$. Let $I=I_M(\nu)=M$. Then $\nu$ is extendible to an irreducible character $\nu_0$ of $M$. Hence we must have $$\chi(1)\in \{\nu_0(1)=\nu(1), 2^f\nu(1), (2^f-1)\nu(1), (2^f+1)\nu(1) \} $$
If $\chi(1)=\nu(1)$ or $2^f\nu(1)$, we have that $l|\nu(1)$. If we choose $l$ such that $l\in \pi(2^f+\epsilon), \epsilon\in \{-1,1\}$ with $|\pi(2^f+\epsilon)|=3$, then by $\pi(2^f+1)\nu(1)$ and $\pi(2^f-1)\nu(1)$ we obtain that $\deg(l)\ge 5$, a contradiction.
We may thus suppose that $I<M$. By Lemma~\ref{conclusion} we obtain that $|M:I|$ is divisible by at least 3 primes with 2 among them. This however results into a $K_4$, since we chose $l$ to belong to the triangle in $\Delta(S)$, a contradiction.

Now we may suppose that $|\pi(S)|=5$. Let $2\neq t\in \pi(G/N)\setminus\pi(S)$. Then by \cite[Theorem A]{WhiteE}, we see that $\Delta(S)$ cannot contain a triangle and thus should be isomorphic to Figure~\ref{fig:p}(1). This implies that $\Delta(S)$ has two connected components (a butterfly and an isolated vertex). Again if we let $\vartheta\in \Irr(G)$ be such that $t s|\vartheta(1)$ for some $s\in \pi(2^{f}-1)$ and $t\in \pi(2^f+1)$. Let $\zeta\in \Irr(N)$ be an irreducible constituent of $\vartheta_N$. Observe that $\zeta\neq 1_N$. Let $I=I_M(\zeta)=M$. By previous argument we obtain that $ts$ are adjacent to all primes in $\pi(S)$ which implies that $\DG$ contains a $K_4$. So we may assume that $I<M$. In this case we obtain that $|M:I|$ is divisible by all primes in two of the sets $\{2\}, \pi(2^f\pm 1)$ which implies that $\DG$ contains a $K_4$, a contradiction.

\end{proof}

\begin{lem}
Assume Hypothesis~\ref{hyp2}. Let $|\pi(S)|=3$, then $G$ does not exist.
\end{lem}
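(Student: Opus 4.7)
The plan is to exploit the fact that the unique 4-regular simple graph on six vertices is the cocktail-party graph $K_{2,2,2}$ (equivalently, $K_6$ minus a perfect matching), in which every vertex has a unique non-neighbour and which contains no $K_4$. Combined with P\'alfy's condition applied to the solvable radical $N$ and Lemma~\ref{TVtri}, this will rule out every possible $S$ with $|\pi(S)|=3$ uniformly.

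First I would record that the nonabelian simple groups with $|\pi(S)|=3$ are exactly the seven groups $\PSU_3(3)$, $\PSL_2(7)$, $\PSL_2(8)$, $\PSL_2(17)$, $\PSL_3(3)$, $\Alt_5$, $\Alt_6$ listed in \cite{Hupp}, and each of them satisfies $\pi(\Out(S))\subseteq \pi(S)$. Since $G/N$ is almost simple with socle $M/N\cong S$, it follows that $\pi(G/N)=\pi(S)$, and hence $\mathcal{C}:=\rho(G)\setminus\pi(G/N)$ consists of exactly three primes, all lying in $\rho(N)$.

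Next I would apply P\'alfy's condition on the solvable group $N$ to the three primes of $\mathcal{C}$ to pick $r,l\in\mathcal{C}$ with $r\sim l$ in $\Delta(N)$, and choose $\theta\in\Irr(N)$ with $rl\mid\theta(1)$. Invoking Lemma~\ref{TVtri} splits into two cases. In case~(a), some $\psi\in\Irr(M|\theta)$ satisfies $p_1p_2\mid\psi(1)/\theta(1)$ for distinct $p_1,p_2\in\pi(S)$, so $\{r,l,p_1,p_2\}$ would induce a $K_4$ in $\DG$; but $\DG\cong K_{2,2,2}$ is $K_4$-free, a contradiction. Hence case~(b) applies: $\theta$ extends to $M$ and $S\cong\Alt_5$ or $\PSL_2(8)$.

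In case~(b), Gallagher's Theorem gives $\theta(1)\chi(1)\in\cd(M)$ for every $\chi\in\Irr(M/N)\cong\Irr(S)$. Since $\cd(\Alt_5)=\{1,3,4,5\}$ and $\cd(\PSL_2(8))=\{1,7,8,9\}$, every prime of $\pi(S)$ divides some $\chi(1)$, so every prime of $\pi(S)$ becomes adjacent to both $r$ and $l$ in $\DG$. Together with the edge $r\sim l$, this already accounts for four neighbours of $r$ (and of $l$), so the third prime $h\in\mathcal{C}$ must be the unique non-neighbour of each of $r$ and $l$. But in $K_{2,2,2}$ no vertex can be the unique non-neighbour of two distinct vertices, and this is the desired contradiction.

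The only point requiring attention is the close-out of case~(b): one must verify that the character degrees of $\Alt_5$ and of $\PSL_2(8)$ together with $rl\mid\theta(1)$ force $r$ and $l$ to be complete to $\pi(S)$ in $\DG$, which is immediate from the two displayed degree sets. Everything else is P\'alfy-plus-Clifford theory in its standard form.
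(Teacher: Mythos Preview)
Your proof is correct and follows essentially the same route as the paper. Both arguments first observe $\pi(G/N)=\pi(S)$ so that $|\mathcal{C}|=3$, then obtain $r,l\in\mathcal{C}$ with $rl\mid\theta(1)$ for some $\theta\in\Irr(N)$, apply Lemma~\ref{TVtri}, and finish with the $K_4$-freeness of $K_{2,2,2}$ in one branch and a counting-of-neighbours contradiction in the other. The only cosmetic differences are that the paper finds the edge $r\sim l$ from the graph structure of $K_{2,2,2}$ (any three vertices span an edge) rather than via P\'alfy on $N$, and phrases the final contradiction as ``the resulting $5$-vertex subgraph is not induced in $K_{2,2,2}$'' rather than your equivalent ``$h$ would be the non-neighbour of two distinct vertices''.
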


\begin{proof}
By \cite{Hupp, ATLAS}, we have that  $|\pi(S)|=3=|\pi(G/N)|$. By any choice of three vertices in Figure~\ref{fig:6vert}, we have that the remaining vertices span an edge. Let $r,l\in \mathcal{C}=\rho(G)\setminus \pi(S)$ be such that $r\sim l$. Let $\vartheta\in \Irr(N)$ be such that $rl|\vartheta(1)$. Then by Lemma~\ref{TVtri}, $\vartheta$ is extendible to $M$ or $\psi(1)/\vartheta(1)$ is divisible by two distinct primes in $\pi(S)$. If $\vartheta$ is extendible to $M$, then $rl$ are adjacent to all the primes in $\pi(S)$. The subgraph obtained is not an induced subgraph of Figure~\ref{fig:6vert}. If $\psi(1)/\vartheta(1)$ is divisible by two distinct primes in $\pi(S)$, then we obtain a $K_4$. 
\end{proof}

\begin{lem}\label{lem:10}
Let $G$ be a finite nonsolvable group with a 4-regular  prime graph $\Delta(G)$  with more than 9 vertices. Then $G$ does not exist.
\end{lem}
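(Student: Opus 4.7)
My plan is to mirror the strategy of the preceding lemmas: reduce to the almost simple case, bound $|\pi(S)|$ using the 4-regular structure, and then derive a contradiction by forcing too many $K_4$'s or a vertex of degree at least five.

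First, I would extend Lemmas~\ref{lem:1} and \ref{lem:2} to $|\rho(G)|\ge 10$. The proof of Lemma~\ref{lem:1} requires only $|\rho(G)|>5$ together with $K_5$-freeness of $\DG$, so it carries over verbatim, showing that every nonsolvable chief factor of $G$ is simple. For the almost simple reduction, I would apply Lemma~\ref{lem:join} exactly as in Lemma~\ref{lem:2}: if the centralizer $C/N=C_{G/N}(M/N)$ properly contained $N$, then the primes in $\rho(C/N)\cap\rho(M/N)$ would be complete vertices of $\DG$. Since $|\rho(G)|\ge 10>7$, \cite[Theorem B]{HungK4} guarantees that $\DG$ contains a $K_4$, and a complete vertex together with a $K_4$ yields a $K_5$, contradicting $4$-regularity with $|\rho(G)|>5$. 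Hence $G/N$ is almost simple with socle $M/N\cong S$, a nonabelian simple group.

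Second, I would bound $|\pi(S)|$. Because $\DG$ is $4$-regular and $K_5$-free, the induced subgraph on $\pi(S)$ is $K_5$-free and every vertex has at most four neighbours. Using the White classification \cite{White, White2, White3} together with \cite{Hupp}, this forces $S\cong \PSL_2(q)$, and then the constraint $\deg(2)\le 4$ in $\Delta(S)$, combined with the explicit descriptions of $\pi(q\pm 1)$, bounds $|\pi(S)|$ by an absolute constant (in any case at most $9$). Combined with $|\pi(\Out(S))\setminus\pi(S)|\le 1$, this shows that the set $\mathcal{C}=\rho(G)\setminus\pi(G/N)\subseteq \rho(N)$ is nonempty and of size at least $|\rho(G)|-|\pi(S)|-1$.

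Finally, I would apply P\'alfy's condition to the solvable radical $N$: as $|\mathcal{C}|$ grows, $\mathcal{C}$ is forced to span at least one, and eventually several disjoint edges. For each such edge $r\sim l$, Lemma~\ref{TVtri} produces either an extension of $\theta$ to $M$ (forcing $S\cong\Alt_5$ or $\PSL_2(8)$, excluded since $|\pi(S)|\ge 4$ via Lemma~\ref{simple3primes}) or a degree ratio $\psi(1)/\theta(1)$ divisible by two primes of $\pi(S)$, yielding a $K_4$ on $\{r,l,u,v\}$ with $u,v\in\pi(S)$. In a $4$-regular graph, two distinct $K_4$'s share at most one vertex, and every vertex of a $K_4$ has exactly one external neighbour. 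Hence as soon as two such $K_4$'s are forced they either share a prime of $\pi(S)$ (pushing its degree to at least $5$) or consume adjacencies that the primes of $\pi(S)$ already need for the internal structure of $\Delta(S)$, a contradiction. The main obstacle I anticipate is not the reduction steps but precisely this final counting: making it uniform across the admissible shapes of $\Delta(S)$, rather than case-by-case as in Lemmas~\ref{9vert}--\ref{7vert}, so that it covers all $|\rho(G)|\ge 10$ at once.
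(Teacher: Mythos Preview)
Your plan is a genuine attempt at a self-contained argument, but the paper's own proof takes a completely different and much shorter route: it simply observes that a $4$-regular graph on more than five vertices is $K_5$-free, and then invokes \cite[Theorem~A]{Zainab} (Akhlaghi--Khedri--Taeri on $K_5$-free prime graphs), which bounds the number of vertices uniformly. No reduction to almost simple, no analysis of $|\pi(S)|$, and no counting of $K_4$'s is needed once that external result is available.

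Your proposal, by contrast, tries to push the case-by-case machinery of Lemmas~\ref{9vert}--\ref{7vert} to all $|\rho(G)|\ge 10$. Two issues arise. First, a small imprecision: in your almost-simple reduction you say the primes in $\rho(C/N)\cap\rho(M/N)$ would be complete vertices of $\DG$, but Lemma~\ref{lem:join} only makes them complete in the subgraph induced by $\rho(CM/N)$; the contradiction you actually get is that $2$ would have degree $\ge 5$ once $|\rho(CM/N)|\ge 6$, which is fine, but the subsequent case analysis of Lemma~\ref{lem:2} was tailored to the explicit $4$-regular graphs on $6$--$9$ vertices and does not transfer automatically. Second, and more seriously, your final step is openly incomplete: you need a uniform argument that two forced $K_4$'s (from Lemma~\ref{TVtri}) must intersect badly in \emph{every} $4$-regular graph on $\ge 10$ vertices, regardless of its shape. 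The number of such graphs grows rapidly with the vertex count, so the graph-by-graph method of the earlier lemmas is no longer available, and you have not supplied a structural substitute. This is exactly why the paper outsources the $\ge 10$ case to \cite{Zainab} rather than extending its own techniques.
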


\begin{proof}
Since $\DG$ is 4-regular, then it is clear that it is $K_5$-free. The proof follows from \cite[Theorem A]{Zainab}.
\end{proof}

\begin{proof}[Proof of Theorem~\ref{thm:MAIN}]
The proof follows from Lemmas~\ref{9vert}, \ref{8vert}, \ref{7vert} and \ref{6vert}.
\end{proof}

\newpage
 
\end{document}